\newtheorem{thm}{Theorem}[section]
\newtheorem{lemma}[thm]{Lemma}
\newtheorem{prop}[thm]{Proposition}
\newtheorem{cor}[thm]{Corollary}
\theoremstyle{definition}
\newtheorem{defin}[thm]{Definition}
\theoremstyle{remark}
\newtheorem{rem}[thm]{Remark}
\numberwithin{equation}{section}
\newcommand{\dif}{\mathrm{d}}
\newcommand{\totdif}{\mathrm{D}}
\newcommand{\mf}{\mathscr{F}}
\newcommand{\mr}{\mathbb{R}}
\newcommand{\prst}{\mathbb{P}}
\newcommand{\stred}{\mathbb{E}}
\newcommand{\ind}{\mathbf{1}}
\newcommand{\mn}{\mathbb{N}}
\newcommand{\mt}{\mathbb{T}^N}
\newcommand{\mttwo}{\mathbb{T}^2}
\DeclareMathOperator{\supp}{supp}
\DeclareMathOperator{\trace}{tr}
\DeclareMathOperator{\cof}{cof}
\DeclareMathOperator{\diver}{div}
\DeclareMathOperator{\id}{Id}
\newcommand{\tec}{{\overset{\cdot}{}}}
\newcommand{\semicol}{;}
\newcommand{\sq}{H(\nabla u)}
\newcommand{\sqt}{H(\nabla \tilde u)}
\newcommand{\eps}{\varepsilon}
\newcommand{\R}{\mathbb{R}}
\newcommand{\N}{\mathbb{N}}
\newcommand{\kappav}{v}
\newcommand{\kappave}{v}
\begin{document}

\title[Stochastic Mean Curvature Flow for two-dimensional graphs]{Weak Solutions for a Stochastic Mean Curvature Flow of two-dimensional graphs}

\author{Martina Hofmanov\'a}
\address[M. Hofmanov\'a]{Max-Planck-Institut f\"ur Mathematik in den Naturwissenschaften, Inselstra\ss e 22, 04103 Leipzig, Germany\newline  Technische Universit\"at Berlin, Institut f\"ur Mathematik, Stra\ss e des 17. Juni 136,
10623 Berlin, Germany}
\email{hofmanov@math.tu-berlin.de}

\author{Matthias R\"oger}
\address[M. R\"oger]{Fakult\"at f\"ur Mathematik, Technische Universit\"at Dortmund, Vogelpothsweg 87,
44227 Dortmund, Germany}
\email{matthias.roeger@math.tu-dortmund.de}

\author{Max von Renesse}
\address[M. von Renesse]{Universit\"at Leipzig, Fakult\"at f\"ur Mathematik und Informatik, Augustusplatz 10, 04109 Leipzig, Germany}
\email{renesse@uni-leipzig.de}

\begin{abstract}
We study a stochastically perturbed mean curvature flow for graphs in $\R^3$ 
over the two-dimensional unit-cube subject to periodic boundary conditions. 
The stochastic perturbation is a one dimensional white noise acting 
uniformly in all points of the surface in normal direction.  We establish the 
existence of a weak martingale solution. The proof is based on energy methods 
and therefore presents an alternative to the stochastic viscosity solution 
approach. To overcome difficulties induced by the degeneracy of the mean 
curvature operator and the multiplicative gradient noise present in the model we 
employ a three step approximation scheme together with refined stochastic
compactness and martingale identification methods.

\end{abstract}

\subjclass[2010]{60H15, 53C44}
\keywords{Stochastic mean curvature flow, weak solution, martingale solution}

\date{\today}

\maketitle

\section{Introduction}
\noindent   Motion by mean curvature of embedded hypersurfaces in $\R^{N+1}$ is an important prototype of a geometric evolution law and has been intensively studied in the past decades, see for example the surveys \cite{MR1931534}, \cite{Ecke04}, \cite{Mant11} or \cite{Bell13}. Mean curvature flow is characterized as a steepest descent evolution for the surface area energy (with respect to an $L^2$ metric) and constitutes a fundamental relaxation dynamics for many problems where the interface size contributes to the systems energy. In
physics it arises for example as an asymptotic reduction of the Allen--Cahn model for the motion of phase boundaries in binary alloys \cite{AlCa79}. 

One of the main difficulties in the mathematical treatment of mean curvature flow is the appearance of topological changes and singularities in finite time, for example by the development of corners and a collapse of parts of the surfaces onto a line in the evolution of a thin dumbbell-shape surface in $\R^3$. Only in particular situations such events are excluded: in the case of initial surfaces given by entire graphs over $\R^{N}$ classical solutions exist for all times \cite{EcHu91}; initally smooth, compact, convex hypersurfaces become round and shrink to a point in finite time \cite{Huis90}.

In order to deal with singularity formation and topological changes generalized formulations have been developed. In his pioneering work Brakke \cite{Brak78} employed a geometric measure theory approach to obtain a general global in time existence result. Level set approaches and viscosity solutions were introduced by Evans and Spruck \cite{EvSp91,EvSp92,EvSp92a,EvSp95} and Chen, Giga, Goto \cite{ChGG91}. Evolutions beyond singularity formation and topological changes can also be obtained by De Giorgi's barrier method \cite{BePa95,BeNo97}, approximation by the Allen--Cahn equation \cite{EvSS92,Ilma93,BaSS93}, time-discretization \cite{LuSt95,AlTW93} and by elliptic regularization \cite{Ilma94}. Several of these approaches have been applied also to more general geometric evolution laws and for perturbations by various forcing terms.

Stochastic mean curvature flow was proposed in 
\cite{citeulike:2163102} as a refined model incorporating the
influence of thermal noise. As a result one may think of a random evolution $(M_t)_{t>0}$ of surfaces in $\R^{N+1}$ given by immersions $\phi_t:M\to\R^{N+1}$, where $M$ is a smooth manifold, and where the increments are given by   
\begin{equation}   
	\dif\phi_t(x)   =  \vec{H}(x,t) \dif t
	+ W(\nu(x,t),\phi_t(x),\circ \dif t), \quad x \in M, \label{initialmodel} 
\end{equation}
where $\vec{H}(x,t)$ denotes the mean curvature vector of $M_t$ in $\phi_t(x)$, $\nu(x,t)$ is the unit normal field on $M_t$ and $W: \mathbb{S}^N\times\R^{N+1} \times \R^+\rightarrow \R^{N+1}$ is a
model specific random field with $W(\theta,y,\circ \dif t)$ being  its 
Stratonovich differential (here one could even allow for an additional dependence of $W$ on $M_t$). As an example  consider   
$W(\theta,y,t)=
\theta\, \varphi(y) \beta_t  $ for $\varphi \in C^{\infty}(\R^{N+1})$ with a standard
real Brownian motion $\beta$, inducing the dynamics   
\begin{equation}   
	\dif\phi_t(x)    =  \nu(x,t) \bigl(\kappa(x,t) \dif t
				+  \varphi(\phi_t(x)) \circ \dif \beta_t\bigr)  \label{normalmodel},  
\end{equation}
where $\kappa(x,t):=\vec{H}(x,t)\cdot\nu(x,t)$ denotes the scalar mean curvature.
As in the deterministic case \eqref{normalmodel} can be formulated as a level set equation. Here the evolution of a function $f:\R^{N+1}\times\R^+\to\R$ is prescribed whose level sets all evolve according to \eqref{normalmodel}. This leads to a stochastic partial differential equation (SPDE) of the form 
\begin{equation}
  \dif f(x,t)  = |\nabla f|(x,t)\,  {\rm div}\bigg( \frac{\nabla
f}{ |\nabla f|} \bigg)(x,t)\, \dif t+ \varphi(x,f(x,t)) |\nabla f|(x,t)\,  \circ \dif \beta_t
\label{lsspde}.
\end{equation}
We stress that the choice of the Stratonovich differential instead of an It\^o term is necessary to retain the geometric meaning of the equation and to make it invariant under reparametrization of the level set function \cite{LiSo02}.

If we restrict ourselves to random evolutions of graphs, scalar mean curvature, normal vector, and velocity of  an evolution $u:\R^N\times \R^+\to\R$ and the associated graphs are given by 
\begin{align*}
	\kappa &={\rm div}\bigg( \frac{\nabla
u}{\sqrt {1 + |\nabla u|^2}} \bigg),\quad \nu=\frac{1}{\sqrt {1 + |\nabla u(x,t)|^2}}(-\nabla u,1)^T,\\
	 \dif\phi_t \cdot\nu &= \frac{1}{\sqrt {1 + |\nabla u(x,t)|^2}}\dif u.
\end{align*}
Equation \eqref{normalmodel} then reduces to the SPDE
\begin{align}
 	\dif u (x,t) &= \sqrt {1 + |\nabla u(x,t)|^2}\, {\rm div}\bigg( \frac{\nabla
u}{\sqrt {1 + |\nabla u|^2}} \bigg)\,(x,t) \dif t \notag\\
	&\qquad\qquad + \sqrt{1 +|\nabla u(x,t)|^2}\varphi(x,u(x))  \circ \dif \beta_t.
\label{grafspde}
\end{align}
Note that we naturally obtain the factor $\sqrt{1 +|\nabla u|^2}$ in front of the noise term and that \eqref{lsspde} reduces to \eqref{grafspde} for $f(x,y)=y-u(x)$, $(x,y)\in\R^N\times\R$. Vice versa, following the approach of Evans and Spruck \cite{EvSp91} one could approximate \eqref{lsspde} by a problem for rescaled graphs (in $\R^{N+2}$), which leads to an equation similar to \eqref{grafspde} but with $\sqrt {1 + |\nabla u|^2}$ replaced by $\sqrt {\varepsilon^2 + |\nabla u|^2}$, $\varepsilon>0$ a small parameter.

We further observe that the first term on the right-hand side of \eqref{grafspde} can be rewritten as 
\begin{align}
	\sqrt {1 + |\nabla u|^2}\, {\rm div}\bigg( \frac{\nabla
u}{\sqrt {1 + |\nabla u|^2}} \bigg) = \bigg(\id - \frac{\nabla
u}{\sqrt {1 + |\nabla u|^2}}\otimes \frac{\nabla
u}{\sqrt {1 + |\nabla u|^2}}\bigg):\totdif^2u \label{eq:H-term}
\end{align}
and that this term corresponds to a degenerate quasilinear elliptic differential operator of second order in the spatial variable.

Even though we circumvent problems with topological changes by restricting ourselves to graphs, substantial mathematical difficulties are still present in the stochastic case. Most importantly, one has to deal with the multiplicative noise with nonlinear gradient dependence and with the degeneracy in the quasilinear elliptic term, which makes a rigorous treatment challenging. In particular, a general well-posedness theory seems still to be missing.
Motivated by the deterministic counterpart of \eqref{lsspde} Lions and
Souganidis 
introduced a notion of stochastic viscosity solutions   
\cite{MR1647162,MR1659958,MR1799099,MR1807189}, but certain technical details of this approach
are still being investigated  \cite{MR1920103,MR2765508,MR3152786}. 
The model \eqref{normalmodel} with constant $\varphi
=\epsilon >0$ was also studied independently  in $N=1$  by
Souganidis and
Yip \cite{MR2037245} 
resp.\ Dirr, Luckhaus and Novaga \cite{MR1867935}, proving   a 'stochastic
selection
principle' for $\epsilon$ tending to zero\footnote{This
means for 
$\epsilon \to 0$  the  level sets the solutions $f_t^\epsilon$ to
\eqref{lsspde} converge a.s.\ to some 
solution of  mean curvature flow  even in cases when $f^0_t$ develops 
'fattening', i.e.\  has zero level sets
of positive Lebesgue measure.}.

\smallskip Several approaches to construct generalized solutions to 
other versions of
\eqref{initialmodel}  can be found in the literature, such as by  
Yip \cite{MR1656479} who selects subsequential
limits along tight approximations of a scheme that combines a time-discrete mean curvature flow and a stochastic flow of diffeomorphism of the ambient space. More recently, extending the rigorous analysis of the sharp interface
limit of
the 1-dimensional stochastic Allen-Cahn equation by Funaki
\cite{MR1337253} in \cite{zbMATH06217659} tightness of solutions for an Allen--Cahn equation perturbed by a stochastic flow was proved. However, both in \cite{MR1656479} and in \cite{zbMATH06217659} a characterization of the limiting evolution law has not been given. \nocite{MR2642385} 
Finally, it  was shown in \cite{MR2888287} that several variants of
\eqref{initialmodel} in dimension $1+1$ can be solved in the variational SPDE
framework (see also \cite{2014arXiv1405.5866G,MR3249580} for refinements resp.
numerical analysis), but this approach is not applicable in higher dimensions.
For completeness let us also mention that the analysis of associated
formal
large
deviation functionals was started in \cite{MR2284215} and remains an active
research field to date.\smallskip

This paper is concerned with equation \eqref{grafspde} in the simplest 
non-trivial case of a one-dimensional stochastic forcing when  $\varphi=1$,
and graphs over the unit cube in $\R^N$ with periodic boundary condition, that 
is over the flat torus $\mt$. 
This yields the SPDE initial-boundary-value problem
\begin{equation}\label{eq:smcf1}
\begin{split}
\dif u&=\sq\diver\left(\frac{\nabla u}{\sq}\right)\dif t+\sq\circ\dif W,\\
u(0)&=u_0,\qquad t\in(0,T),\, x\in\mt,
\end{split}
\end{equation}
where $\sq=\sqrt{1+|\nabla u|^2}$, $W$ is a real-valued one-dimensional  
Wiener process and
$\circ$ denotes the Stratonovich product. 

We  emphasize that this case is 
contained in the theory famously announced by Lions and Souganidis in
\cite{MR1647162,MR1659958,MR1799099,MR1807189}. In this paper, however, our aim is to
introduce an alternative approach that is based on energy methods and that 
yields the existence of weak martingale solutions to \eqref{eq:smcf1}. Even if 
we consider here a more restrictive setting, we believe that our approach can 
be 
extended to more general situations (see e.g.\ remark \ref{rem:morgen} below)
 and might be very 
helpful in problems where a comparison principle and viscosity solution 
formulations are not available. 

The use of energy methods is motivated by the gradient flow structure of the deterministic mean curvature flow. We prove that also in the case of \eqref{eq:smcf1} we retain a control over the surface area energy and over the times-space integral of the squared mean curvature, see Proposition \ref{prop:area}. In the deterministic case, in addition one often can prove an $L^\infty$ bound for the gradient (see for example \cite{EcHu91}) and consequently the uniform ellipticity of the mean curvature operator. Such a bound is typically obtained from an evolution equation derived for the function $\sqrt{1+|\nabla u|^2}$ and cannot be expected for the stochastic equation \eqref{eq:smcf1}. In contrast, our approach is based on an $L^2$ bound for $\nabla u$, see Proposition \ref{prop:est1}. These bounds are carefully exploited in a three step approximation and corresponding passages to the limit. Several refined and original tightness and identification arguments together with compensated compactness and Young measures 
techniques are required, that we believe are of independent interest.

We do not use here any more refined monotonicity properties that are often employed in the deterministic case (in particular to study singularities), most notably Huisken's monotonicity formula \cite{Huis90,Ecke04}. Such formulas are deduced from the time derivative of the surface integral over particular test functions (typically backward heat kernels). In our case the corresponding time differential comes with quite some additional terms from the It\^o--Stratonovich correction and the It\^o formula. It is not clear that appropriate cancellation properties allow to control such terms. Therefore, it is only the monotonicity property (in the corresponding deterministic equation) of the total area that we use here (or rather the control of the total area that still holds for  \eqref{eq:smcf1}). A pure PDE approach that also does not rely on any refined monotonicity formulas (but crucially on the decrease of total area) has been used in \cite{EvSp91} to prove the existence of level set solutions to mean curvature flow. We use here some ingredients of their work (in particular a compensated compactness argument) but have to deal with some additional difficulties, such as the fact that no maximum estimate for the gradient is available in our case.
%
%
\section{Mathematical framework and main results}

Our main result is the existence of weak martingale solutions to the It\^o form of \eqref{eq:smcf1} in the case $N=2$. By a direct calculation one can verify that the It\^ o-Stratonovich correction corresponding to the stochastic integral in \eqref{eq:smcf1} is
$$\frac{1}{2}\frac{\nabla u}{H(\nabla u)}\otimes\frac{\nabla u}{H(\nabla u)}:\totdif^2 u\,\dif t$$
and hence, in view of \eqref{eq:H-term}, equation \eqref{eq:smcf1} rewrites as 
\begin{equation}\label{eq:smcf2}
\begin{split}
\dif u&=\frac{1}{2}\Delta u\,\dif t+\frac{1}{2}H(\nabla u)\diver\bigg(\frac{\nabla u}{H(\nabla u)}\bigg)\,\dif t+\sq\,\dif W,\\
u(0)&=u_0,\qquad t\in(0,T),\, x\in\mathbb T^N,
\end{split}
\end{equation}
or equivalently
\begin{equation}\label{eq:smcf3}
\begin{split}
\dif u&=\Delta u\,\dif t-\frac{1}{2}\frac{\nabla u}{H(\nabla u)}\otimes\frac{\nabla u}{H(\nabla u)}:\totdif^2 u\,\dif t+\sq\,\dif W,\\
u(0)&=u_0,\qquad t\in(0,T),\, x\in\mathbb T^N.
\end{split}
\end{equation}
As we aim at establishing existence of a solution to \eqref{eq:smcf2} that is weak in both probabilistic and PDEs sense, let us introduce these two notions. From the point of view of the theory of
PDEs, we consider solutions that satisfy \eqref{eq:smcf2} in the sense of distributions and that fulfill a suitable surface area energy inequality. This implies in particular that the mean curvature belongs to $L^2$ with respect to the surface area measure $H(\nabla u)$.

From the probabilistic point of view, two concepts of solution are typically considered in the theory of stochastic evolution equations, namely, pathwise (or strong) solutions and martingale (or
weak) solutions. In the former notion the underlying probability space as well as the driving process is fixed in advance while
in the latter case these stochastic elements become part of the solution of the problem. Clearly, existence of a pathwise solution is stronger and implies existence of a martingale solution. In the present work we establish existence of a martingale solution to \eqref{eq:smcf2}. Due to the classical Yamada-Watanabe-type argument (see e.g. \cite{krylov}, \cite{pr07}), existence of a pathwise solution would then follow if pathwise uniqueness held true, however, uniqueness for \eqref{eq:smcf2} is out of the scope of the present article. In hand with this issue goes the way how the initial condition is posed: we are given a Borel probability measure on $H^1(\mt)$, hereafter denoted by $\Lambda$, that fulfills some further assumptions specified in Theorem \ref{thm:main} and plays the role of an initial law for \eqref{eq:smcf2}, that is, we require that the law of $u(0)$ coincides with $\Lambda$.

\begin{defin}\label{def:sol}
Let $\Lambda$ be a Borel probability measure on $H^1(\mt)$. Then
$$\big((\Omega,\mf,(\mf_t),\prst),u,W\big)$$
is called a weak martingale solution to \eqref{eq:smcf2} with the initial law $\Lambda$ provided
\begin{enumerate}
\item $(\Omega,\mf,(\mf_t),\prst)$ is a stochastic basis with a complete right-continuous filtration,
\item $W$ is a real-valued $(\mf_t)$-Wiener process,
\item $u\in L^2(\Omega\times[0,T],\mathcal{P},\dif\prst\otimes\dif t;H^1(\mt))$,\footnote{$\mathcal{P}$ denotes the predictable $\sigma$-algebra on $\Omega\times[0,T]$ associated to $(\mf_t)_{t\geq0}$}
\item the area measure $H(\nabla u)$ belongs to $L^1(\Omega,L^\infty(0,T;L^1(\mt)))$,
\item the mean curvature
\begin{equation*}
 v=\diver\bigg(\frac{\nabla  u}{H(\nabla u)}\bigg)
\end{equation*}
belongs to $L^2\big(\Omega\times[0,T]\times\mt,H(\nabla u)\,\dif\prst\otimes\dif t\otimes\dif x\big)$,
\item there exists a $\mf_0$-measurable random variable $u_0$ such that $\Lambda=\prst\circ u_0^{-1}$ and for every $\varphi\in C^\infty(\mt)$ it holds true for a.e. $t\in[0,T]$ a.s.
\begin{equation*}
 \begin{split}
  \langle  u(t),\varphi\rangle&=\langle  u_0,\varphi\rangle-\frac{1}{2}\int_0^t\langle  \nabla u,\nabla\varphi\rangle\dif s+\frac{1}{2}\int_0^t\langle H(\nabla u) v,\varphi\rangle\dif s+\int_0^t\langle H(\nabla u)\dif W,\varphi\rangle.
 \end{split}
\end{equation*}
\end{enumerate}
 
\end{defin}

\begin{rem}
According to Definition \ref{def:sol}(vi), equation \eqref{eq:smcf2} is satisfied in $H^{-1}(\mt)$. In particular, the solution $ u$ regarded as a class of equivalence in
$$L^2(\Omega\times[0,T],\mathcal{P},\dif\prst\otimes\dif t;H^1(\mt))$$
has a representative $\bar{u}$ with almost surely continuous trajectories in $H^{-1}(\mt)$ and moreover $\bar{u}(0)=u_0$.

\end{rem}

With this definition at hand we can formulate our main result. 

\begin{thm}\label{thm:main}
Assume $N=2$ and that the initial law $\Lambda$ satisfies\footnote{Here and in the sequel, we write $L^2_x$ for $L^2(\mt)$ and similarly for other spaces.}
\begin{equation}\label{eq:data}
\int_{H^1_x}\|\nabla z\|_{L^2_x}^2\,\dif\Lambda(z)<\infty.
\end{equation}
Then there exists a weak martingale solution to \eqref{eq:smcf2} with the initial law $\Lambda$.
\end{thm}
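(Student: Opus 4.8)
The plan is to follow the route announced in the introduction: regularise \eqref{eq:smcf2} so that smooth approximate solutions exist, extract a priori bounds that survive the regularisation, pass to the limit by stochastic compactness, and identify the limit as a weak martingale solution.

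First I would set up a three-stage approximation of \eqref{eq:smcf2} (for instance: a Galerkin discretisation in the trigonometric basis of $\mt$, a truncation replacing $H(\nabla u)=\sqrt{1+|\nabla u|^2}$ by a bounded smooth $H_R$ that agrees with $H$ on $\{|\nabla u|\le R\}$ so that the noise coefficient becomes globally Lipschitz, and a mild regularisation of the coefficients and of the initial datum), whose solutions are regular enough --- say in $L^2(\Omega;C([0,T];H^2_x))$ by parabolic estimates --- to apply It\^o's formula to $\|u\|_{L^2_x}^2$ and to the area functional. Then I would establish, uniformly in the approximation parameters, the bounds of Propositions~\ref{prop:est1} and~\ref{prop:area}: testing with $u$ one exploits that the It\^o correction $\tfrac12\|H(\nabla u)\|_{L^2_x}^2=\tfrac12(|\mt|+\|\nabla u\|_{L^2_x}^2)$ of the Stratonovich term is absorbed by the dissipation $-\tfrac12\|\nabla u\|_{L^2_x}^2$ coming from $\tfrac12\Delta u$, which after Gronwall and Burkholder--Davis--Gundy gives the bound in $L^2(\Omega\times[0,T];H^1_x)$; applying It\^o's formula to the area functional $\mathcal A(u):=\int_{\mt}H(\nabla u)\,\dif x$ and using the gradient-flow structure of the deterministic flow produces the dissipation $-\int_{\mt}H(\nabla u)\,v^2\,\dif x$ for $v=\diver(\nabla u/H(\nabla u))$, the remaining It\^o and It\^o--Stratonovich terms being controlled through $|\nabla u|\le H(\nabla u)$ and the previous bound; this yields $H(\nabla u)\in L^1(\Omega;L^\infty(0,T;L^1_x))$ together with the weighted $L^2$-bound on $v$, and, since $H(\nabla u)\ge1$, an unweighted $L^2(\Omega\times[0,T]\times\mt)$-bound on $v$ as well.

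Second, combining these bounds with the equation and the Lipschitz truncation one obtains a uniform fractional time regularity of $u$ in a negative Sobolev space, and the standard combination with Aubin--Lions and Arzel\`a--Ascoli arguments yields tightness of the laws of $(u,W)$ on a path space in which $u$ has continuous trajectories into $H^{-1}_x$, is weakly continuous into $L^2_x$ and belongs to $L^2(0,T;H^1_x)$. By the Jakubowski--Skorokhod representation theorem there is a new probability space carrying variables $(\tilde u^k,\tilde W^k)$ with the same laws as the approximations and converging almost surely, along a subsequence also with $\nabla\tilde u^k\rightharpoonup\nabla\tilde u$ and $v^k\rightharpoonup\tilde v$ weakly in $L^2$.

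The core of the proof --- and the step I expect to be the main obstacle --- is the identification of the nonlinear terms, since plain weak convergence of $\nabla\tilde u^k$ is insufficient and, as emphasised in the introduction, no gradient maximum estimate is available here. Following the compensated-compactness scheme of Evans--Spruck, note that $\tilde\nu^k:=\nabla\tilde u^k/H(\nabla\tilde u^k)$ is bounded in $L^\infty$ with $\diver\tilde\nu^k=v^k$ bounded in $L^2$, while $\operatorname{curl}\nabla\tilde u^k=0$; the div--curl lemma (in the spatial variables, using the compact embedding $L^2_x\hookrightarrow H^{-1}_x$) identifies the distributional limit of $\tilde\nu^k\cdot\nabla\tilde u^k=|\nabla\tilde u^k|^2/H(\nabla\tilde u^k)$ as $\tilde\nu^*\cdot\nabla\tilde u$, where $\tilde\nu^*$ is the weak-$*$ limit of $\tilde\nu^k$, and Minty's trick for the strictly monotone map $p\mapsto p/\sqrt{1+|p|^2}$ (the gradient of the convex integrand $\sqrt{1+|p|^2}$) forces $\tilde\nu^*=\nabla\tilde u/H(\nabla\tilde u)$, hence $\tilde v=\diver(\nabla\tilde u/H(\nabla\tilde u))$, the weighted $L^2$-bound passing to the limit by lower semicontinuity. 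The same monotonicity identity, together with the observation that strict monotonicity and the uniform $L^2$-bound prevent $|\nabla\tilde u^k|$ from escaping to infinity on a set of positive measure, gives $\nabla\tilde u^k\to\nabla\tilde u$ almost everywhere; since $H(\nabla\tilde u^k)^{3/2}\le C(1+|\nabla\tilde u^k|^{3/2})$ is uniformly integrable by the gradient bound, Vitali's theorem upgrades this to $H(\nabla\tilde u^k)\to H(\nabla\tilde u)$ in $L^1$, which both excludes concentration of the surface-area measures and allows passage to the limit in $\int_0^tH(\nabla\tilde u^k)\,\dif\tilde W^k$; a Young-measure description of the defect (or a direct energy comparison exploiting the $\tfrac12\Delta$-coercivity) handles the term $\int_0^t\langle H(\nabla\tilde u^k)v^k,\varphi\rangle\,\dif s$ in the weak formulation. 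With all limits in hand, the standard martingale identification method applies: one verifies that $\tilde u^k(t)-\tilde u^k(0)-\int_0^t\big(\tfrac12\Delta\tilde u^k+\tfrac12H(\nabla\tilde u^k)v^k\big)\,\dif s$ is a square-integrable martingale with the correct quadratic and cross variations against $\tilde W^k$, passes to the limit, and concludes via the martingale representation theorem that the limit equals $\int_0^tH(\nabla\tilde u)\,\dif\tilde W$ on the augmented filtration generated by $(\tilde u,\tilde W)$; assumption \eqref{eq:data} guarantees finiteness of the initial energy and that $\tilde u(0)$ has law $\Lambda$. The genuine difficulty is that this deterministic, $\omega$-wise compensated-compactness and Minty analysis of the degenerate mean-curvature operator and of the nonlinear noise coefficient has to be performed inside the stochastic compactness and martingale-identification machinery while simultaneously ruling out concentration of $H(\nabla\tilde u^k)\,\dif x$, the only structural leverage being the additional parabolicity hidden in the $\tfrac12\Delta u$ term and the uniform bounds of Propositions~\ref{prop:est1} and~\ref{prop:area}.
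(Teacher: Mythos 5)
Your overall architecture (regularise, obtain the gradient and area estimates, Jakubowski--Skorokhod compactness, div--curl/Minty identification of the degenerate operator, martingale identification) matches the paper, but two steps that you treat as routine are in fact the crux, and as written they fail.

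First, the area estimate. You claim that after applying It\^o's formula to $\mathcal A(u)=\int_{\mttwo}H(\nabla u)\,\dif x$ ``the remaining It\^o and It\^o--Stratonovich terms'' are ``controlled through $|\nabla u|\le H(\nabla u)$ and the previous bound''. They are not: the It\^o correction produces $\tfrac12\int H^{-1}\big(\id-\tfrac{\nabla u}{H}\otimes\tfrac{\nabla u}{H}\big):\nabla H\otimes\nabla H\,\dif x$ and the Stratonovich--It\^o correction produces a companion term, both quadratic in second derivatives of $u$ (through $\nabla H$), and no available estimate (the $L^2$ gradient bound or the dissipation $\int |v|^2H$) controls either of them individually. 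The paper's Proposition \ref{prop:area} works only because these two terms cancel \emph{exactly}: their sum equals $\int_{\mttwo}\det D\big(\tfrac{\nabla u}{H}\big)\,H\,\dif x$, i.e.\ the integral of the Gaussian curvature of the graph, which vanishes by a Gauss--Bonnet/cofactor-divergence argument using the torus topology --- and this is precisely where $N=2$ enters. Your proposal never uses $N=2$ at all, so the central hypothesis of the theorem plays no role in your argument; without the cancellation the area estimate, and hence everything downstream, is unproved.

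Second, the limit $\varepsilon\to0$ and the stochastic integral. You assume tightness in a path space where $u$ is continuous into $H^{-1}_x$ and weakly continuous into $L^2_x$, and you close the argument ``via the martingale representation theorem'' after passing to the limit in the quadratic and cross variations. With only the $p=2$ moment of $\nabla u$ uniform in $\varepsilon$ (the $p>2$ bounds of Proposition \ref{prop:est1} degenerate as $\varepsilon\to0$), no tightness in any space of (weakly) continuous-in-time functions is available, so the martingale identities can only be obtained for a.e.\ $t$; moreover the strong convergence of the gradients holds only in $L^p_{\omega,t,x}$ for $p<2$, so $\int\langle H(\nabla\tilde u^n),\varphi\rangle^2\dif r$ does not converge with the uniform integrability your scheme needs. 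This is why the paper includes the auxiliary processes $I^\varepsilon=\int_0^\cdot\|H(\nabla u^\varepsilon)\|_{L^1_x}^{1+\theta}\dif s$ and $V^\varepsilon=v^\varepsilon\sqrt{H(\nabla u^\varepsilon)}$ in the tightness step, localises with the stopping times $\tau_{R_m}(\tilde I^n)$ (chosen so that $\tau_{R_m}$ is a.s.\ continuous at $\tilde I$), and uses the densely-defined local-martingale method of \cite{hof,hofse} together with the representation-free identification of \cite{on1}; the martingale representation theorem is deliberately avoided. Your treatment of the product term $H(\nabla\tilde u^n)\tilde v^n$ (``a Young-measure description of the defect \dots handles the term'') is also too vague for the same reason --- the paper needs exactly the extra variable $V^\varepsilon$ plus the strong $L^2$ convergence of $\sqrt{H(\nabla\tilde u^n)}$ to give weak $L^1$ convergence of the product. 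The div--curl/Minty part of your identification is essentially the paper's Young-measure argument and is fine, but these two gaps --- the $N=2$ cancellation in the area estimate and the localisation/densely-defined-martingale machinery replacing martingale representation --- are the substance of the proof and are missing.
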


Our proof relies on a three step approximation scheme. The mean curvature operator $\mathcal L u = \sq\diver\left(\frac{\nabla u}{\sq}\right)$ is elliptic but not uniformly elliptic. As key approximation step we therefore  add an artificial viscosity term $\eps\Delta u$. This viscous approximation of the target equation \eqref{eq:smcf1} might be interesting also in its own right. Existence is still not immediate as the equation is quasilinear and the nonlinearities in the equation are not Lipschitz. We therefore add two further regularizations and firstly increase the order of the equation by adding a term $-\eta\Delta^{2K} u$, $K\in\N$ sufficiently large, which in turn yields a semilinear (nondegenerate) parabolic SPDE. We secondly replace $\totdif^2 u$ in \eqref{eq:H-term} by a suitable uniformly bounded truncation $\Theta^R(\totdif^2 u)$, such that the corresponding nonlinearity is Lipschitz. For the resulting equation existence of a unique mild solution is deduced by semigroup arguments.

To obtain the existence of the original equation we pass to the limit with the respective regularizations. Convergence with $R\to\infty$ can be performed via a stopping time argument (Theorem \ref{thm:regularized}). Using the stochastic compactness method,  in Section \ref{sec:viscous} we let $\eta\to 0$.  In particular, in Theorem \ref{prop:martsol} we obtain existence of a strong martingale solution to the viscous approximation of \eqref{eq:smcf1} in any space dimension. 

The most challenging part is the passage to the limit $\eps\to 0$ in Section \ref{sec:van-vis}. In Proposition \ref{prop:area} we first derive a crucial uniform estimate for the surface area and mean curvature. It is this step where we need to restrict ourselves to spatial dimension $N=2$; at some point we need a cancellation property of terms that originate from the Stratonovich-It\^o correction and the It\^o chain rule, respectively. This property uses a Gauss--Bonnet type formula that is only valid for surfaces (here we also exploit the periodic boundary condition). Still, to pass to the limit $\eps\to 0$ we have to overcome several substantial difficulties. In particular, the only available estimate for higher order derivatives is given by the mean curvature bound. However, both the $L^2$ gradient bound and the bound on area and mean curvature are not available for higher moments. 

Therefore, in order to identify the limit of the nonlinear terms in the equation, we proceed in several steps. First, in Proposition \ref{prop:skorokhod}, it is not enough to prove tightness for the approximate solutions only so we also include some (nonlinear) functionals of their gradients. This leads us to the Jakubowski-Skorokhod representation theorem (see \cite{jakubow}), which is valid in a large class of topological spaces that are not necessarily metrizable but retain several important properties of Polish spaces. However, the implied convergence still does not suffice hence in Proposition \ref{prop:young} we employ compensated compactness and Young measure arguments to deduce a crucial strong convergence property of the gradients in suitable $L^p$-spaces. Note that a similar method was already used in the context of mean curvature flow in \cite{EvSp95} (although rather for the convergence towards the level set formulation).

Finally, in Subsection \ref{subsec:5.3}, we employ a refined identification procedure for the stochastic integral. It is based on a general method of constructing martingale solutions in the absence of suitable martingale representation theorems introduced in \cite{on1} as well as a method of densely defined martingales from \cite{hof} and a local martingale approach from \cite{hofse}.
More precisely, the method of densely defined martingales which was developed in \cite{hof} is applied in order to deal with martingales that are only defined for almost all times and no continuity properties are a priori known (see \cite[Theorem 4.13, Appendix]{hof}). In that case, the corresponding quadratic variations are not well defined and the approach of Subsection \ref{subsec:identif} does not apply directly.
Second, the local martingales approach of \cite{hofse} is invoked to overcome the difficulty in the passage to the limit.

Both issues originate in the lack of uniform moment estimates for $\nabla u$. Indeed, on the one hand, we are not able to obtain tightness of the approximate solutions in any space of continuous (or weakly continuous) functions in time and consequently the passage to the limit in the corresponding martingales can be performed only for a.e. $t$.
On the other hand, we are only able to establish the strong convergence of the gradients in $L^p_{\omega,t,x}$ for $p\in[1,2)$ and the convergence in $L^2_{\omega,t,x}$ remains weak, which is not enough to pass to the limit in the quadratic variation.

Let us also mention that, since we are dealing with solutions that are weak in the PDE sense, the classical infinite dimensional It\^o formula (see for instance \cite{daprato}) does not apply and hence, in order to establish the a priori estimates rigorously, one is led to a suitable generalized version. Several times throughout the paper we therefore refer to \cite[Proposition A.1]{dehovo} which provides such a generalization for a wide class of quasilinear parabolic SPDEs. In the same spirit one can justify the computations in the present paper.

\begin{rem} \label{rem:morgen} Theorem \ref{thm:main} remains true in the 
slightly generalized 
case of the SPDE
\[ \dif u=\sq\diver\left(\frac{\nabla u}{\sq}\right)\dif t+ \sqrt  
\rho\,  \sq\circ\dif W,\]
provided the parameter $\rho >0 $ modeling the strength of the 
noise is not too large, i.e.\ $\rho <2$. This is in perfect agreement  with 
the corresponding well posedness result in 1D obtained by completely different 
means in \cite[Theorem 3.3]{MR2888287}. Indeed, in the It\^o representation 
the SPDE above reads
\[ \dif u=  \frac \rho 2 \Delta u \dif t + (1- \frac \rho 2 ) \mathcal L 
u \, \dif t  + \sqrt  
\rho\,  \sq \dif W,\]
where $\mathcal L u = \sq\diver\left(\frac{\nabla u}{\sq}\right)$ is the mean 
curvature operator. Note that for $\rho>0$ the drift is uniformly elliptic. 
Moreover, in  the crucial gradient energy 
estimate the first term in the drift exactly dissipates the influx of the
energy through  the noise term. In the simplified case when $\mathcal 
L$ is replaced by zero this can immediately  be seen from the corresponding It\^o
formula. (Full details for our case involving 
$\mathcal L$ are given in  Proposition \ref{prop:est1} below.)  
In particular, the gradient $\nabla u$ is controlled in $L^2$ such that the solution remains a graph.

\smallskip It would be interesting to consider the case of a more general noise term of the 
form 
\[\sum_{i=1}^K \eta_i(x) H(\nabla u) \circ \dif W^{i}\] for independent 
Wiener processes $\{W^{i}, i=1, \cdots, K\}$, under appropriate boundedness conditions 
on the smooth coefficient functions  $\{\eta_i, i=1, \cdots ,K\}$. If one proceeds by similar means 
as for \eqref{eq:smcf1} even in the case $K=1$ computations quickly become very involved, and it is not clear whether similar cancellation properties hold in this case.

\end{rem}

\section{Regularized equation}
\label{sec:regul}

To begin with, let $(\Omega,\mf,(\mf_t)_{t\geq0},\prst)$ be a stochastic basis with a complete, right-continuous filtration and let $W$ be a real-valued Wiener process relative to $(\mf_t)$. In order to prepare the initial data for the first approximation layer, let $u_0$ be a $(\mf_0)$-measurable random variable with the law $\Lambda$ and for $\varepsilon\in(0,1)$ let $u_0^{\varepsilon}$ be an approximation of $u_0$ such that for all $p\in[2,4)$
$$\stred\|u^{\varepsilon}_0\|^p_{H^k_x}\leq C_{\varepsilon}$$
where we fixed $k\in\mn$ such that $k>2+N/2$.
Let the law of $u_0^{\varepsilon}$ on $H^k(\mt)$ be denoted by $\Lambda^{\varepsilon}$. Then according to \eqref{eq:data} the following estimate holds true uniformly in $\varepsilon$
\begin{equation}\label{eq:dataappr}
\int_{H^1_x}\|\nabla z\|_{L^2_x}^2\,\dif\Lambda^{\varepsilon}(z)=\stred\|\nabla u^{\varepsilon}_0\|_{L^2_x}^2\leq C
\end{equation}
and $\Lambda^\varepsilon\overset{*}{\rightharpoonup} \Lambda$ in the sense of measures on $H^1(\mt)$.

As the first step in the proof of existence for \eqref{eq:smcf2}, we consider its equivalent form \eqref{eq:smcf3} and approximate in the following way\footnote{Here and in the sequel, $A^*$ denotes the transpose of a matrix $A$.}
\begin{equation}\label{eq:approx}
\begin{split}
\dif u&=(1+\varepsilon)\Delta u\,\dif t-\frac{1}{2}\frac{(\nabla u)^*}{\sq}\totdif^2 u\frac{\nabla u}{\sq}\,\dif t-\eta\Delta^{2K}u\,\dif t+\sq\,\dif W,\\
u(0)&=u^{\varepsilon}_0.
\end{split}
\end{equation}
Our aim here is to establish an existence result for $\varepsilon,\eta$ fixed and $K$ sufficiently large.

\begin{thm}\label{thm:regularized}
Let $k,K\in\mn$ be such that $2+N/2<k<2K$ and let $p>2$. Assume that $u^\varepsilon_0\in L^p(\Omega;H^k)$. Then for any $\varepsilon,\eta\in(0,1)$ there exists $u\in L^p(\Omega\times[0,T],\mathcal{P},\dif\prst\otimes\dif t;H^k)$ that is the unique mild solution to \eqref{eq:approx}.

\begin{proof}
In order to guarantee the Lipschitz property of the nonlinear second order term in \eqref{eq:approx}, let $R\in\mn$ and consider the truncated problem
\begin{equation}\label{eq:approxtr}
\begin{split}
\dif u&=(1+\varepsilon)\Delta u\,\dif t-\frac{1}{2}\frac{(\nabla u)^*}{\sq}\Theta^R(\totdif^2 u)\frac{\nabla u}{\sq}\,\dif t-\eta\Delta^{2K}u\,\dif t+\sq\,\dif W,\\
u(0)&=u^{\varepsilon}_0,
\end{split}
\end{equation}
where $\Theta^R:\mr^{N\times N}\rightarrow \mr^{N\times N}$ is a truncation, i.e. for $A=(a_{ij})\in\mr^{N\times N}$ we define $\Theta^R(A)=\big(\theta^R(a_{ij})a_{ij}\big)$ where $\theta^R:\mr\rightarrow[0,1]$ is a smooth truncation satisfying
$$\theta^R(\xi)=\begin{cases}
                 1,&|\xi|\leq R/2\\
		 0,&|\xi|\geq R.
                \end{cases}
$$

Let $S$ denote the semigroup generated by the strongly elliptic differential operator $\eta\Delta^{2K}-(1+\varepsilon)\Delta$. Let $\mathcal{H}=L^p(\Omega\times[0,T],\mathcal{P},\dif\prst\otimes\dif t;H^k)$ and define the mapping
\begin{equation*}
 \begin{split}
\mathcal K u(t)&=S(t)u^{\varepsilon}_0-\frac{1}{2}\int_0^t S(t-s)\frac{(\nabla u)^*}{H(\nabla u)}\Theta^R(\totdif^2 u)\frac{\nabla u}{H(\nabla u)}\,\dif s\\
&\qquad+\int_0^t S(t-s) H(\nabla u)\,\dif W.  
 \end{split}
\end{equation*}
Then $\mathcal{K}$ maps $\mathcal{H}$ into $\mathcal{H}$ and it is a contraction. Indeed, using the regularization property of the semigroup and Young's inequality for convolutions we obtain for any $u\in\mathcal{H}$ (provided $k<2K$)
\begin{equation}\label{eq:growth}
\begin{split}
\bigg\|\int_0^\tec S&(\cdot-s)\frac{(\nabla u)^*}{H(\nabla u)}\Theta^R(\totdif^2 u)\frac{\nabla u}{H(\nabla u)}\,\dif s\bigg\|_{\mathcal{H}}^p\\
&\leq \stred\int_0^T\bigg(\int_0^t\bigg\|S(t-s)\frac{(\nabla u)^*}{H(\nabla u)}\Theta^R(\totdif^2 u)\frac{\nabla u}{H(\nabla u)}\bigg\|_{H^k}\dif s\bigg)^p\dif t\\
&\leq C\,\stred\int_0^T\bigg(\int_0^t(t-s)^{-k/4K}\bigg\|\frac{(\nabla u)^*}{H(\nabla u)}\Theta^R(\totdif^2 u)\frac{\nabla u}{H(\nabla u)}\bigg\|_{L^2}\dif s\bigg)^p\dif t\\
&\leq C T^{p(1-k/4K)}\|u\|_{\mathcal{H}}^p,
\end{split}
\end{equation}
and similarly for the stochastic term where we apply the Burkholder-Davis-Gundy inequality first (see e.g. \cite{b1})
\begin{equation*}
\begin{split}
\bigg\|\int_0^\tec S(\cdot-s)&H(\nabla u)\,\dif W\bigg\|_{\mathcal{H}}^p\leq C\int_0^T\stred\bigg(\int_0^t\big\|S(t-s)H(\nabla u)\big\|_{H^k}^2\dif s\bigg)^{p/2}\dif t\\
&\leq C\,\stred\int_0^T\bigg(\int_0^t(t-s)^{-k/2K}\big\|H(\nabla u)\big\|_{L^2}^2\dif s\bigg)^{p/2}\dif t\\
&\leq CT^{p/2(1-k/2K)}\|u\|_{\mathcal{H}}^p.
\end{split}
\end{equation*}
In order to verify the contraction property, we observe that for any $u,v\in\mathcal{H}$
\begin{equation*}
\begin{split}
\bigg\|\frac{(\nabla u)^*}{H(\nabla u)}\Theta^R(\totdif^2 u)&\frac{\nabla u}{H(\nabla u)}-\frac{(\nabla v)^*}{H(\nabla v)}\Theta^R(\totdif^2 v)\frac{\nabla v}{H(\nabla v)}\bigg\|_{L^2}\\
&\leq \big\|\totdif^2 u-\totdif^2 v\big\|_{L^2}+C_R\big\|\nabla u-\nabla v\big\|_{L^2}\leq C_R\|u-v\|_{H^k}
\end{split}
\end{equation*}
hence by a similar approach as above
\begin{equation*}
\begin{split}
\bigg\|\int_0^\tec S&(\cdot-s)\bigg(\frac{(\nabla u)^*}{H(\nabla u)}\Theta^R(\totdif^2 u)\frac{\nabla u}{H(\nabla u)}-\frac{(\nabla v)^*}{H(\nabla v)}\Theta^R(\totdif^2 v)\frac{\nabla v}{H(\nabla v)}\bigg)\,\dif s\bigg\|_{\mathcal{H}}^p\\
&\leq C_R\,\stred\int_0^T\bigg(\int_0^t(t-s)^{-k/4K}\| u- v\|_{H^k}\dif s\bigg)^p\dif t\leq C_R\,T^{p(1-k/4K)}\|u-v\|_\mathcal{H}^p
\end{split}
\end{equation*}
and
\begin{equation*}
\begin{split}
\bigg\|\int_0^\tec S(\cdot-s)&\big(H(\nabla u)-H(\nabla v)\big)\,\dif W\bigg\|_{\mathcal{H}}^p\\
&\leq C\,\stred\int_0^T\bigg(\int_0^t(t-s)^{-k/2K}\big\|H(\nabla u)-H(\nabla v)\big\|_{L^2}^2\dif s\bigg)^{p/2}\dif t\\
&\leq CT^{p/2(1-k/2K)}\|u-v\|_{\mathcal{H}}^p.
\end{split}
\end{equation*}
Therefore, if $T$ is small enough then $\mathcal{K}$ has unique fixed point $u$ in $\mathcal{H}$ which is the mild solution of \eqref{eq:approxtr}. Furthermore, by a standard use of the factorization lemma (see \cite{daprato}), it has continuous trajectories with values in $H^k$ provided $K$ is large enough, i.e. it belongs to $L^p(\Omega;C([0,T];H^k)).$
Therefore, the condition on $T$ can be easily removed by considering the equation on smaller intervals $[0,\tilde{T}],\,[\tilde{T},2\tilde{T}],$ etc.

As a consequence, for any $R\in\mn$ there exists a unique mild solution to \eqref{eq:approxtr}, let it be denoted by $u^R$. Furthermore, since $k>\frac{N}{2}+2$, it follows from the Sobolev embedding theorem that
\begin{equation}\label{eq:blowup}
\stred\sup_{0\leq t\leq T}\big\|\totdif^2 u^R\big\|_{L^\infty}\leq C,
\end{equation}
where the constant on the right hand side is independent of $R$ (this can be seen from the fact that the growth estimates for the nonlinear second order term in \eqref{eq:approxtr} do not depend on $R$, cf. \eqref{eq:growth}). Hence
$$\tau_R=\inf\Big\{t>0;\big\|\totdif^2 u^R\big\|_{L^\infty}\geq R/2\Big\}$$
(with the convention $\inf\emptyset=T$) defines an $(\mf_t)$-stopping time and $u^R$ is a solution to \eqref{eq:approx} on $[0,\tau_R)$. Besides, due to uniqueness, if $R'>R$ then $\tau_{R'}\geq\tau_R$ and $u^{R'}=u^{R}$ on $[0,\tau_R)$. Moreover, the blow up cannot occur in a finite time by \eqref{eq:blowup} so
$$\tau=\sup_{R\in\mn}\tau_R=T\qquad\text{a.s.}$$
and therefore the process $u$ which is uniquely defined by $u:=u^R$ on $[0,\tau_R)$ is the unique mild solution to \eqref{eq:approx} on $[0,T]$.
\end{proof}
\end{thm}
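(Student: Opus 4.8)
The plan is to prove existence and uniqueness of a mild solution to the fully regularized equation \eqref{eq:approx} via a two-layer strategy: first solve the truncated equation \eqref{eq:approxtr} by a Banach fixed point argument, then remove the truncation by a stopping-time argument exploiting the fact that the high-order dissipation $-\eta\Delta^{2K}u$ forces enough spatial regularity to prevent blow-up of $\totdif^2u$ in $L^\infty$.

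\textbf{Step 1: the truncated equation.} I would fix $R\in\mn$ and work with the mild formulation of \eqref{eq:approxtr}. Let $S(t)$ be the analytic semigroup generated by $\eta\Delta^{2K}-(1+\varepsilon)\Delta$; since this operator is strongly elliptic of order $4K$, $S$ enjoys the smoothing estimate $\|S(t)f\|_{H^k}\le Ct^{-k/4K}\|f\|_{L^2}$ together with analogous bounds between other Sobolev scales. I would set $\mathcal H=L^p(\Omega\times[0,T],\mathcal P,\dif\prst\otimes\dif t;H^k)$ and define the Picard map $\mathcal K$ as in the statement. The key points to verify are: (a) $\mathcal K$ maps $\mathcal H$ into itself, using the semigroup smoothing, Young's inequality for the time convolution (which costs the integrable singularity $t^{-k/4K}$, finite since $k<2K$), the Burkholder–Davis–Gundy inequality for the stochastic convolution, and the fact that the truncated nonlinearity $\frac{(\nabla u)^*}{H(\nabla u)}\Theta^R(\totdif^2u)\frac{\nabla u}{H(\nabla u)}$ is bounded in $L^2$ by $C\|u\|_{H^k}$ because $|\nabla u|/H(\nabla u)\le 1$ and $\Theta^R$ is globally bounded; and (b) $\mathcal K$ is a contraction on a short time interval, which follows from the Lipschitz bound on the truncated nonlinearity (here $\Theta^R$ smooth with bounded derivatives gives a constant $C_R$, and the difference of the rational prefactors is controlled by $\|\nabla u-\nabla v\|_{L^2}$, all dominated by $C_R\|u-v\|_{H^k}$). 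The contraction constant scales like a positive power of $T$, so for $T$ small there is a unique fixed point $u^R$. A factorization-lemma argument upgrades $u^R$ to $L^p(\Omega;C([0,T];H^k))$ (provided $K$ is large enough that the relevant exponents allow it), and then the small-time restriction is removed by iterating over consecutive subintervals.

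\textbf{Step 2: removing the truncation.} Since $k>N/2+2$, Sobolev embedding gives $H^k\hookrightarrow C^2$, so from the $\mathcal H$-bound (which, crucially, is uniform in $R$ because the growth estimate \eqref{eq:growth} does not see $R$) I get $\stred\sup_{[0,T]}\|\totdif^2u^R\|_{L^\infty}\le C$ independently of $R$. Define the stopping time $\tau_R=\inf\{t:\|\totdif^2u^R\|_{L^\infty}\ge R/2\}$. On $[0,\tau_R)$ the truncation $\Theta^R$ acts as the identity on $\totdif^2u^R$, so $u^R$ solves the untruncated \eqref{eq:approx} there; uniqueness (from Step 1 applied to \eqref{eq:approxtr} with the larger parameter, noting the two equations coincide before the smaller stopping time) gives consistency $u^{R'}=u^R$ on $[0,\tau_R)$ for $R'>R$ and monotonicity $\tau_{R'}\ge\tau_R$. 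The uniform $L^\infty$ bound on $\totdif^2u^R$ prevents the stopping times from accumulating below $T$: by Chebyshev, $\prst(\tau_R<T)\le\prst(\sup_{[0,T]}\|\totdif^2u^R\|_{L^\infty}\ge R/2)\to 0$, hence $\tau:=\sup_R\tau_R=T$ a.s. The process defined by pasting, $u:=u^R$ on $[0,\tau_R)$, is then the unique mild solution to \eqref{eq:approx} on $[0,T]$, and it inherits the $\mathcal H$-regularity.

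\textbf{Main obstacle.} The routine-but-delicate part is Step 1: matching the semigroup smoothing exponents against the nonlinearity and the noise so that all time-convolution integrals converge and the BDG estimate closes in $\mathcal H$ — this is exactly where the hypothesis $2+N/2<k<2K$ is used (one needs $k/4K<1$ for the drift term and $k/2K<1$ after BDG for the stochastic term, while $k>2+N/2$ is needed for the later $C^2$-embedding and for the nonlinearity to make classical sense). I expect no genuine difficulty beyond careful bookkeeping here, since the truncation $\Theta^R$ makes every nonlinearity globally Lipschitz on $H^k$; the only conceptual subtlety is ensuring the $R$-independence of the a priori bound in Step 2, which, as noted, is immediate because the bound \eqref{eq:growth} uses only $|\nabla u|/H(\nabla u)\le1$ and $0\le\theta^R\le1$.
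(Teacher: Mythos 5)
Your proposal is correct and follows essentially the same route as the paper: a Banach fixed point argument for the $\Theta^R$-truncated equation in $L^p(\Omega\times[0,T],\mathcal{P};H^k)$ using semigroup smoothing, Young's convolution inequality and Burkholder--Davis--Gundy, factorization and time-iteration to get a global solution, and then the $R$-independent $L^\infty$ bound on $\totdif^2u^R$ (via $H^k\hookrightarrow C^2$, $k>2+N/2$) with the stopping times $\tau_R$ and consistency for $R'>R$ to paste together the solution of the untruncated equation. The only cosmetic difference is that you phrase the non-accumulation of the stopping times via Chebyshev, which is the same observation the paper makes when it says the blow-up cannot occur in finite time by \eqref{eq:blowup}.
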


\section{Viscous approximation}
\label{sec:viscous}

Having Theorem \ref{thm:regularized} in hand it is necessary to find sufficient estimates uniform in $\eta$ in order to justify the passage to the limit as $\eta\rightarrow0$ and obtain a martingale solution to
\begin{equation}\label{eq:approx1}
\begin{split}
\dif u&=(1+\varepsilon)\Delta u\,\dif t-\frac{1}{2}\frac{(\nabla u)^*}{\sq}\totdif^2 u\frac{\nabla u}{\sq}\,\dif t+\sq\,\dif W
\end{split}
\end{equation}
with the initial law $\Lambda$.
Let us fix $\eps>0$ and denote by $u^\eta$ the solution to \eqref{eq:approx} given by Theorem \ref{thm:regularized}. Recall that $\prst\circ u^\eta(0)^{-1}=\Lambda^{\varepsilon}$ for all $\eta\in(0,1)$ and the uniform estimate \eqref{eq:dataappr} holds true.

\begin{prop}\label{prop:est1}
For any $p\in[2,2(1+\varepsilon)]$ it holds true
\begin{equation}\label{eq:est1}
\begin{split}
\stred\|\nabla u^\eta(t)\|_{L^2}^p+\frac{p(2(1+\varepsilon)-p)}{2}\stred\int_0^t\|\nabla u^\eta\|_{L^2}^{p-2}\|\Delta u^\eta\|_{L^2}^2\leq\stred\|\nabla u_0^{\varepsilon}\|_{L^2}^p\leq C,
\end{split}
\end{equation}
where the constant $C$ is independent of $\eta$ and if $p=2$ then it is also independent of $\varepsilon.$

\begin{proof}
Since mild solution is a weak solution, we consider the function $f(\nabla v)=\|\nabla v\|_{L^2}^p$ and apply similar arguments as in the generalized It\^o formula \cite[Proposition A.1]{dehovo} to obtain
\begin{equation*}
\begin{split}
\stred\|\nabla u^\eta(t)\|_{L^2}^p&=\stred\|\nabla u^{\varepsilon}_0\|_{L^2}^p+p(1+\varepsilon)\stred\int_0^t\|\nabla u^\eta\|_{L^2}^{p-2}\big\langle\nabla u^\eta,\nabla\Delta u^\eta\big\rangle\,\dif s\\
&\quad-p\eta\stred\int_0^t\|\nabla u^\eta\|_{L^2}^{p-2}\big\langle\nabla u^\eta,\nabla\Delta^{2K} u^\eta\big\rangle\,\dif s\\
&\quad-\frac{p}{2}\stred\int_0^t\|\nabla u^\eta\|_{L^2}^{p-2}\bigg\langle\nabla u^\eta,\nabla\bigg(\frac{(\nabla u^\eta)^*}{H(\nabla u^\eta)}\totdif^2 u^\eta\frac{\nabla u^\eta}{H(\nabla u^\eta)}\bigg)\bigg\rangle\,\dif s\\
&\quad+\frac{p}{2}\stred\int_0^t\|\nabla u^\eta\|_{L^2}^{p-2}\big\|\nabla H(\nabla u^\eta)\big\|_{L^2}^2\dif s\\
&\quad+ \frac{p(p-2)}{2}\stred\int_0^t\|\nabla u^\eta\|_{L^2}^{p-4}\big\langle\nabla u^\eta,\nabla H(\nabla u^\eta)\big\rangle^2\dif s\\
&=J_1+\cdots+J_6.
\end{split}
\end{equation*}
It holds
\begin{equation*}
\begin{split}
J_2+J_3&\leq -p(1+\varepsilon)\stred\int_0^t\|\nabla u^\eta\|_{L^2}^{p-2}\|\Delta u^\eta\|_{L^2}^2\dif s,\\
J_4&\leq\frac{p}{2}\stred\int_0^t\|\nabla u^\eta\|_{L^2}^{p-2}\|\Delta u^\eta\|_{L^2}^2\dif s,\\
J_5+J_6&\leq \frac{p(p-1)}{2}\stred\int_0^t\|\nabla u^\eta\|_{L^2}^{p-2}\|\Delta u^\eta\|_{L^2}^2\dif s,
\end{split}
\end{equation*}
where we used the fact that $\|\Delta u^\eta\|_{L^2}=\|\totdif^2 u^\eta\|_{L^2}$ due to boundary conditions. Hence the claim follows.
\end{proof}
\end{prop}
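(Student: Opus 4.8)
The plan is to apply a version of the infinite-dimensional It\^o formula to the functional $f(v) = \|\nabla v\|_{L^2}^p$ evaluated along the solution $u^\eta$ of the regularized equation \eqref{eq:approx}, using the generalized chain rule of \cite[Proposition A.1]{dehovo} since $u^\eta$ is only a mild/weak solution. Writing $f(\nabla v) = (\|\nabla v\|_{L^2}^2)^{p/2}$, the first and second Fr\'echet derivatives in the direction of $\Delta v$ (from integration by parts) produce the drift contributions $J_1$ through $J_4$ and, via the It\^o correction applied to the noise coefficient $H(\nabla u^\eta)$, the quadratic terms $J_5$ and $J_6$. The term $J_1 = \stred\|\nabla u_0^\varepsilon\|_{L^2}^p$ is the initial value, $J_2$ comes from the $(1+\varepsilon)\Delta u$ drift, $J_3$ from the $-\eta\Delta^{2K}u$ drift, $J_4$ from the quasilinear second-order term, and $J_5+J_6$ from the Stratonovich/It\^o noise term $H(\nabla u^\eta)\dif W$; the genuine martingale part has zero expectation and is dropped. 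The hyperviscosity term contributes $J_3 = -p\eta\stred\int_0^t\|\nabla u^\eta\|_{L^2}^{p-2}\langle \nabla u^\eta, \nabla\Delta^{2K}u^\eta\rangle\,\dif s \le 0$ after integrating by parts (an even number of times, gaining a perfect square of derivatives), so it only helps.

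Next I would bound each $J_i$ from above in terms of the single controllable quantity $\|\nabla u^\eta\|_{L^2}^{p-2}\|\Delta u^\eta\|_{L^2}^2$. For $J_2$: $\langle \nabla u^\eta, \nabla\Delta u^\eta\rangle = -\|\Delta u^\eta\|_{L^2}^2$ after integration by parts (periodic boundary conditions, no boundary terms), giving $J_2 \le -p(1+\varepsilon)\stred\int_0^t\|\nabla u^\eta\|_{L^2}^{p-2}\|\Delta u^\eta\|_{L^2}^2$. For $J_4$: the quasilinear term $\tfrac{(\nabla u^\eta)^*}{H}\totdif^2 u^\eta \tfrac{\nabla u^\eta}{H}$ is pointwise bounded in absolute value by $|\totdif^2 u^\eta|$ (since $|\nabla u^\eta|^2/H(\nabla u^\eta)^2 < 1$), so after integration by parts and Cauchy--Schwarz one gets $J_4 \le \tfrac{p}{2}\stred\int_0^t\|\nabla u^\eta\|_{L^2}^{p-2}\|\totdif^2 u^\eta\|_{L^2}^2 = \tfrac{p}{2}\stred\int_0^t\|\nabla u^\eta\|_{L^2}^{p-2}\|\Delta u^\eta\|_{L^2}^2$, where the last equality is the standard identity $\|\totdif^2 v\|_{L^2} = \|\Delta v\|_{L^2}$ on the torus (again using periodicity). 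For $J_5+J_6$: since $H(\nabla u^\eta) = \sqrt{1+|\nabla u^\eta|^2}$ is Lipschitz with $|\nabla H| = \tfrac{|(\totdif^2 u^\eta)\nabla u^\eta|}{H} \le |\totdif^2 u^\eta|$ pointwise, both $\|\nabla H(\nabla u^\eta)\|_{L^2}^2$ and $\langle \nabla u^\eta,\nabla H(\nabla u^\eta)\rangle^2/\|\nabla u^\eta\|_{L^2}^2$ are bounded by $\|\totdif^2 u^\eta\|_{L^2}^2 = \|\Delta u^\eta\|_{L^2}^2$, which combines to $J_5 + J_6 \le \tfrac{p}{2}\stred\int_0^t\|\nabla u^\eta\|_{L^2}^{p-2}\|\Delta u^\eta\|_{L^2}^2 + \tfrac{p(p-2)}{2}\stred\int_0^t\|\nabla u^\eta\|_{L^2}^{p-2}\|\Delta u^\eta\|_{L^2}^2 = \tfrac{p(p-1)}{2}\stred\int_0^t\|\nabla u^\eta\|_{L^2}^{p-2}\|\Delta u^\eta\|_{L^2}^2$.

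Collecting the estimates, the coefficient of $\stred\int_0^t\|\nabla u^\eta\|_{L^2}^{p-2}\|\Delta u^\eta\|_{L^2}^2$ on the right-hand side is at most $-p(1+\varepsilon) + \tfrac{p}{2} + \tfrac{p(p-1)}{2} = -\tfrac{p}{2}\big(2(1+\varepsilon) - p\big)$, so moving it to the left gives exactly \eqref{eq:est1}; the chain of inequalities $\stred\|\nabla u_0^\varepsilon\|_{L^2}^p \le C$ then follows for $p=2$ from \eqref{eq:dataappr} (uniformly in both $\eta$ and $\varepsilon$) and for general $p$ from the assumption $u_0^\varepsilon \in L^p(\Omega;H^k)$ (with $C$ depending on $\varepsilon$). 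The main technical obstacle is not any single estimate but the rigorous justification of the It\^o formula for the non-smooth functional $v\mapsto\|\nabla v\|_{L^2}^p$ along a solution that is only weak in the PDE sense — this is where one genuinely needs the generalized It\^o formula of \cite[Proposition A.1]{dehovo} rather than the classical one from \cite{daprato}, and one must check its hypotheses are met by \eqref{eq:approx} (which is a nondegenerate semilinear parabolic SPDE once $\eta>0$ is fixed, so the requirements are satisfied). A secondary point requiring care is the sign bookkeeping in $J_5+J_6$ when $p<2$, where $p(p-2)/2<0$ actually works in our favor but the reader should be reassured the bound still closes; and one should note the restriction $p \le 2(1+\varepsilon)$ is precisely what keeps the left-hand dissipation coefficient $p(2(1+\varepsilon)-p)/2$ nonnegative.
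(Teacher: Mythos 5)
Your proposal follows essentially the same route as the paper: the generalized It\^o formula of \cite[Proposition A.1]{dehovo} applied to $v\mapsto\|\nabla v\|_{L^2}^p$, the same decomposition into the terms $J_1,\dots,J_6$, the same pointwise bounds $\big|\tfrac{(\nabla u)^*}{H}\totdif^2u\tfrac{\nabla u}{H}\big|\le|\totdif^2u|$ and $|\nabla H(\nabla u)|\le|\totdif^2u|$ together with the identity $\|\totdif^2u\|_{L^2}=\|\Delta u\|_{L^2}$ on the torus, and the same bookkeeping yielding the dissipation coefficient $\tfrac{p}{2}(2(1+\varepsilon)-p)$. The only cosmetic differences (treating $J_3\le 0$ separately rather than lumping it with $J_2$, and the irrelevant aside about $p<2$, which is outside the stated range $p\in[2,2(1+\varepsilon)]$) do not affect correctness.
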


\begin{prop}\label{prop:L2}
It holds true
\begin{equation*}\label{eq:L2a}
\begin{split}
\stred\|u^\eta(t)\|_{L^2}^2\leq C_\varepsilon\big(1+\stred\| u^{\varepsilon}_0\|_{L^2}^2\big)\leq C,
\end{split}
\end{equation*}
where the constant $C$ is independent of $\eta$.

\begin{proof}
With regard to Proposition \ref{prop:est1}, the above estimate is a consequence of the It\^o formula applied to the function $f(v)=\| v\|_{L^2}^2$:
\begin{equation*}
\begin{split}
\stred\| u^\eta(t)\|_{L^2}^2&=\stred\| u^{\varepsilon}_0\|_{L^2}^2+2(1+\varepsilon)\stred\int_0^t\big\langle u^\eta,\Delta u^\eta\big\rangle\,\dif s-2\eta\stred\int_0^t\big\langle u^\eta,\Delta^{2K} u^\eta\big\rangle\,\dif s\\
&\quad-\stred\int_0^t\bigg\langle u^\eta,\frac{(\nabla u^\eta)^*}{H(\nabla u^\eta)}\totdif^2 u^\eta\frac{\nabla u^\eta}{H(\nabla u^\eta)}\bigg\rangle\,\dif s+\stred\int_0^t\big\| H(\nabla u^\eta)\big\|_{L^2}^2\dif s\\
&=J_1+\cdots+J_5.
\end{split}
\end{equation*}
Similar arguments as above imply
\begin{equation*}
\begin{split}
J_2+J_3&\leq -2(1+\varepsilon)\stred\int_0^t\|\nabla u^\eta\|_{L^2}^2\dif s,\\
J_4&\leq\frac{1}{2}\stred\int_0^t\| u^\eta\|_{L^2}^2\dif s+\frac{1}{2}\stred\int_0^t\|\Delta u^\eta\|_{L^2}^2,\\
J_5&\leq \stred\int_0^t\|\nabla u^\eta\|_{L^2}^2\dif s,
\end{split}
\end{equation*}
hence Proposition \ref{prop:est1} and the Gronwall lemma completes the proof.
\end{proof}
\end{prop}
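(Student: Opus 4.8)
The plan is to estimate $\stred\|u^\eta(t)\|_{L^2}^2$ by applying the It\^o formula to the functional $f(v)=\|v\|_{L^2}^2$ along the mild solution $u^\eta$ of the regularized equation \eqref{eq:approx}, exactly as in Proposition \ref{prop:est1}; since $u^\eta$ is only a mild (hence weak in the PDE sense) solution, this computation has to be justified through the generalized It\^o formula of \cite[Proposition A.1]{dehovo} rather than the classical Hilbert-space chain rule. This produces five terms $J_1,\dots,J_5$: the initial datum $J_1=\stred\|u^\eta_0\|_{L^2}^2$, the two elliptic/hyperviscous drift contributions $J_2+J_3$, the contribution $J_4$ from the degenerate second-order nonlinearity, and the It\^o correction $J_5=\stred\int_0^t\|H(\nabla u^\eta)\|_{L^2}^2\,\dif s$ coming from the quadratic variation of the Stratonovich-corrected noise.

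The next step is to bound each term. Integration by parts gives $J_2+J_3\le -2(1+\varepsilon)\stred\int_0^t\|\nabla u^\eta\|_{L^2}^2\,\dif s$, because the $\eta\Delta^{2K}$ term always contributes with a favourable sign. For $J_4$ one uses that the matrix $\frac{(\nabla u^\eta)^*}{H(\nabla u^\eta)}\otimes\frac{\nabla u^\eta}{H(\nabla u^\eta)}$ has operator norm at most $1$, together with the identity $\|\totdif^2 u^\eta\|_{L^2}=\|\Delta u^\eta\|_{L^2}$ valid on the torus, and then a Cauchy--Schwarz/Young splitting $J_4\le \tfrac12\stred\int_0^t\|u^\eta\|_{L^2}^2\,\dif s+\tfrac12\stred\int_0^t\|\Delta u^\eta\|_{L^2}^2\,\dif s$. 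For $J_5$ one expands $H(\nabla u^\eta)^2=1+|\nabla u^\eta|^2$ so that $\|H(\nabla u^\eta)\|_{L^2}^2=|\mt|+\|\nabla u^\eta\|_{L^2}^2$, hence $J_5\le C+\stred\int_0^t\|\nabla u^\eta\|_{L^2}^2\,\dif s$. The crucial point is that the leftover $\tfrac12\stred\int_0^t\|\Delta u^\eta\|_{L^2}^2\,\dif s$ from $J_4$ is absorbed by part of the strictly negative term coming from $J_2+J_3$ (here one spends a piece of the $(1+\varepsilon)\stred\int_0^t\|\Delta u^\eta\|_{L^2}^2$ available via integration by parts in $J_2$, rather than the $\|\nabla u^\eta\|_{L^2}^2$ piece), while the remaining negative $\|\nabla u^\eta\|_{L^2}^2$ terms dominate the positive contributions from $J_4$ and $J_5$ up to the harmless constant.

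Collecting the estimates yields an inequality of the form $\stred\|u^\eta(t)\|_{L^2}^2\le \stred\|u^\eta_0\|_{L^2}^2+C_\varepsilon+C_\varepsilon\stred\int_0^t\|u^\eta(s)\|_{L^2}^2\,\dif s$, to which Gronwall's lemma applies and gives $\stred\|u^\eta(t)\|_{L^2}^2\le C_\varepsilon(1+\stred\|u^\eta_0\|_{L^2}^2)$; the final bound by $C$ then follows from the uniform data bound on $\Lambda^\varepsilon$ (note $\stred\|u^\eta_0\|_{L^2}^2=\stred\|u^\varepsilon_0\|_{L^2}^2\le \stred\|u^\varepsilon_0\|_{H^k}^2<\infty$ by Poincar\'e, since $u^\varepsilon_0$ has the fixed law $\Lambda^\varepsilon$), and all constants are manifestly independent of $\eta$ because the $\eta$-term was discarded with the correct sign. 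The one genuinely delicate step, as in Proposition \ref{prop:est1}, is not any single estimate but the bookkeeping that shows the destabilizing $\tfrac12\|\Delta u^\eta\|_{L^2}^2$ created by the degenerate nonlinearity $J_4$ is strictly dominated by what the viscosity $(1+\varepsilon)\Delta$ provides, so that no term involving second derivatives survives on the right-hand side; this is exactly where $\varepsilon>0$ is used and why the resulting constant depends on $\varepsilon$. Everything else is a routine application of Young's inequality, the operator-norm bound on the projection-type matrix, and Gronwall.
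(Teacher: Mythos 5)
Your computation coincides with the paper's up to and including the term-by-term bounds on $J_1,\dots,J_5$, but the step you yourself call crucial is argued incorrectly. You claim that the leftover $\tfrac12\stred\int_0^t\|\Delta u^\eta\|_{L^2}^2\,\dif s$ produced by $J_4$ is absorbed by ``a piece of the $(1+\varepsilon)\stred\int_0^t\|\Delta u^\eta\|_{L^2}^2$ available via integration by parts in $J_2$.'' No such piece exists: $J_2=2(1+\varepsilon)\stred\int_0^t\langle u^\eta,\Delta u^\eta\rangle\,\dif s=-2(1+\varepsilon)\stred\int_0^t\|\nabla u^\eta\|_{L^2}^2\,\dif s$, so in the $L^2$ balance the viscous drift only dissipates the $H^1$-seminorm, never $\|\Delta u^\eta\|_{L^2}^2$. (A $-\|\Delta u^\eta\|_{L^2}^2$ term does appear when one tests the equation for $\nabla u^\eta$ with $\nabla u^\eta$, i.e.\ in the proof of Proposition \ref{prop:est1}, which is what you seem to be transplanting.) The only second-order dissipation in the $L^2$ identity is the $\eta$-weighted term $J_3=-2\eta\,\stred\int_0^t\|\Delta^{K}u^\eta\|_{L^2}^2\,\dif s$, and using it would introduce a constant blowing up as $\eta\to0$, destroying exactly the uniformity in $\eta$ that the proposition asserts. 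As written, your Gronwall inequality $\stred\|u^\eta(t)\|_{L^2}^2\le \stred\|u_0^\varepsilon\|_{L^2}^2+C_\varepsilon+C_\varepsilon\stred\int_0^t\|u^\eta\|_{L^2}^2\,\dif s$ is therefore not justified.

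The repair is exactly the paper's route, signalled by the opening phrase ``With regard to Proposition \ref{prop:est1}'': take $p=2$ in \eqref{eq:est1}, where the dissipation coefficient is $\tfrac{p(2(1+\varepsilon)-p)}{2}=2\varepsilon>0$, to get $\stred\int_0^T\|\Delta u^\eta\|_{L^2}^2\,\dif s\le C/(2\varepsilon)=C_\varepsilon$ uniformly in $\eta$. This turns the problematic term from $J_4$ into a harmless constant $C_\varepsilon$ (and it is here, not in any intra-identity absorption, that $\varepsilon>0$ enters and the $\varepsilon$-dependence of the constant arises); the gradient terms from $J_5$ are dominated by $J_2+J_3$ or again bounded by Proposition \ref{prop:est1}, and Gronwall then closes the estimate. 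Two minor points: your bound on $\stred\|u_0^\varepsilon\|_{L^2}^2$ by $\stred\|u_0^\varepsilon\|_{H^k}^2$ is trivial and needs no Poincar\'e inequality (Poincar\'e on the torus would anyway require a mean-zero reduction), and in $J_5$ one has $\|H(\nabla u^\eta)\|_{L^2}^2=|\mttwo|+\|\nabla u^\eta\|_{L^2}^2$, so keeping the additive constant, as you do, is the accurate form of the paper's estimate.
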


\begin{prop}\label{prop:holder}
Let $p\in(2,2(1+\varepsilon))$. Then for any $\alpha\in(1/p,1/2)$ there exists $C_\varepsilon>0$ such that
\begin{equation}\label{eq:holder}
\stred\|u^\eta\|_{C^{\alpha-1/p}([0,T];H^{2-4K})}^p\leq C_\varepsilon.
\end{equation}

\begin{proof}
According to Propositions \ref{prop:est1} and \ref{prop:L2}, $u^\eta\in L^2(\Omega;L^2(0,T;H^2(\mt)))$ uniformly in $\eta$ (not $\varepsilon$). As a consequence,
$$(1+\varepsilon)\Delta u^\eta-\frac{1}{2}\frac{(\nabla u^\eta)^*}{H(\nabla u^\eta)}\totdif^2 u^\eta\frac{\nabla u^\eta}{H(\nabla u^\eta)}-\eta\Delta^{2K}u^\eta$$
belongs to $L^2(\Omega;L^2(0,T;H^{2-4K}))$ uniformly in $\eta$ and
\begin{equation*}
 \stred\Big\|u^\eta-\int_0^\tec H(\nabla u^\eta)\dif W\Big\|_{C^{1/2}([0,T];H^{2-4K})}\leq C_\varepsilon.
\end{equation*}
Concerning the stochastic integral, we have by factorization and \eqref{eq:est1}
\begin{equation*}
\begin{split}
 \stred \bigg\|\int_0^\tec H(\nabla u^\eta)\dif W&\bigg\|_{C^{\alpha-1/p}([0,T];L^2)}^p\leq C\,\stred\bigg\|\int_0^\tec(\cdot-s)^{-\alpha} H(\nabla u^\eta)\dif W(s)\bigg\|_{L^p(0,T;L^2)}^p\\
&\leq C\int_0^T\stred\bigg(\int_0^t(t-s)^{-2\alpha}\big(1+\|\nabla u^\eta\|_{L^2}^2\big)\dif s\bigg)^{p/2}\dif t\\
&\leq C\, T^{p/2(1-2\alpha)}\stred\int_0^T\big(1+\|\nabla u^\eta\|_{L^2}^p\big)\dif t\leq C
\end{split}
\end{equation*}
and the claim follows.
\end{proof}
\end{prop}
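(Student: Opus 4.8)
The plan is to decompose the mild solution as
\[
u^\eta(t)=u_0^{\varepsilon}+\int_0^t b^\eta(s)\,\dif s+\int_0^t H(\nabla u^\eta)\,\dif W,
\]
with drift
\[
b^\eta:=(1+\varepsilon)\Delta u^\eta-\tfrac12\,\frac{(\nabla u^\eta)^*}{H(\nabla u^\eta)}\,\totdif^2 u^\eta\,\frac{\nabla u^\eta}{H(\nabla u^\eta)}-\eta\Delta^{2K}u^\eta,
\]
and to estimate the drift integral and the stochastic convolution separately in $C^{\alpha-1/p}([0,T];H^{2-4K})$. For the drift, Propositions~\ref{prop:est1} and~\ref{prop:L2} give a bound on $u^\eta$ in $L^2(\Omega;L^2(0,T;H^2))$ uniform in $\eta$; combined with the pointwise estimate $|\nabla u^\eta|/H(\nabla u^\eta)\le1$ and the identity $\|\totdif^2 u^\eta\|_{L^2}=\|\Delta u^\eta\|_{L^2}$ (periodic boundary conditions), this controls each summand of $b^\eta$ in $L^2(0,T;H^{2-4K})$ by $\|u^\eta\|_{L^2(0,T;H^2)}$, uniformly in $\eta$. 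A Cauchy--Schwarz estimate $\|\int_s^t b^\eta\,\dif r\|_{H^{2-4K}}\le|t-s|^{1/2}\|b^\eta\|_{L^2(0,T;H^{2-4K})}$ then makes $t\mapsto\int_0^t b^\eta$ into a $\tfrac12$-Hölder path with values in $H^{2-4K}$; since $\alpha-1/p<\tfrac12$ this bounds the deterministic contribution. To get the full $p$-th moment here one first upgrades Propositions~\ref{prop:est1}--\ref{prop:L2} to $\stred\sup_{t}\|u^\eta(t)\|_{L^2}^p+\stred\big(\int_0^T\|\Delta u^\eta\|_{L^2}^2\,\dif t\big)^{p/2}\le C_\varepsilon$ by a routine Burkholder--Davis--Gundy argument that again uses $p<2(1+\varepsilon)$.

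For the stochastic convolution I would use the factorization method. Fixing $\alpha\in(1/p,1/2)$, write
\[
\int_0^t H(\nabla u^\eta)\,\dif W=\frac{\sin(\pi\alpha)}{\pi}\int_0^t(t-s)^{\alpha-1}Y^\eta(s)\,\dif s,\qquad
Y^\eta(s):=\int_0^s(s-r)^{-\alpha}H(\nabla u^\eta)\,\dif W(r).
\]
The Burkholder--Davis--Gundy inequality, followed by Hölder's inequality in the finite measure $(s-r)^{-2\alpha}\,\dif r$ (integrable since $2\alpha<1$) and Fubini's theorem, gives
\[
\stred\|Y^\eta\|_{L^p(0,T;L^2)}^p\le C\int_0^T\!\!\int_0^s(s-r)^{-2\alpha}\,\stred\big(1+\|\nabla u^\eta(r)\|_{L^2}^2\big)^{p/2}\dif r\,\dif s\le C,
\]
where the last bound uses $\|H(\nabla u^\eta)\|_{L^2}^2=|\mt|+\|\nabla u^\eta\|_{L^2}^2$ together with the uniform-in-$\eta$ moment bound of Proposition~\ref{prop:est1}, which needs $p<2(1+\varepsilon)$. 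The classical factorization lemma (see e.g.~\cite{daprato}), applicable because $\alpha>1/p$, then maps $Y^\eta\in L^p(0,T;L^2)$ into $\int_0^{\cdot}(t-s)^{\alpha-1}Y^\eta(s)\,\dif s\in C^{\alpha-1/p}([0,T];L^2)$ with the associated norm estimate, and $L^2\hookrightarrow H^{2-4K}$ closes this term. Adding the two contributions yields the assertion.

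The genuinely delicate point is not any single estimate but the interplay of exponents: $\alpha<1/2$ is forced by square-integrability of the singular kernel $(s-r)^{-\alpha}$ defining $Y^\eta$, $\alpha>1/p$ is forced by the factorization lemma (which only lands in a Hölder space once the kernel exponent beats the integrability loss $1/p$), and $p<2(1+\varepsilon)$ is dictated by the sole available gradient moment bound, Proposition~\ref{prop:est1} --- it is precisely the artificial viscosity $\varepsilon>0$ that opens the window $p\in(2,2(1+\varepsilon))$ and hence a nonempty range of $\alpha$. Once these constraints are respected the computation is essentially mechanical; the only additional care required is that $u^\eta$ is merely a mild (hence PDE-weak) solution, so the decomposition and all estimates above must be read in $H^{2-4K}$ and justified through the weak formulation, exactly in the spirit of \cite[Proposition A.1]{dehovo} already used for Proposition~\ref{prop:est1}.
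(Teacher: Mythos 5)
Your proof is correct and follows essentially the same route as the paper: the same splitting of the mild solution into the drift integral (bounded in $L^2(0,T;H^{2-4K})$ via Propositions \ref{prop:est1}--\ref{prop:L2}, hence $\tfrac12$-H\"older by Cauchy--Schwarz) and the stochastic convolution treated by the factorization method, with the identical exponent bookkeeping $1/p<\alpha<1/2$ and $p<2(1+\varepsilon)$. The only deviation is your explicit remark on upgrading the drift bound to $p$-th moments via Burkholder--Davis--Gundy (which indeed works, using the weighted dissipation term of Proposition \ref{prop:est1} for absorption); the paper simply states the $C^{1/2}$ bound for the drift part in lower moments, which suffices for the subsequent tightness argument.
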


Now we would like to pass to the limit $\eta\searrow 0$.

\subsection{Compactness}
\label{subsec:compact}

Let us define the path space $\mathcal{X}=\mathcal{X}_u\times\mathcal{X}_v\times\mathcal{X}_W$, where\footnote{If a topological space $X$ is equipped with the weak topology we write $(X,w)$.}
$$\mathcal{X}_u=L^2\big(0,T;H^1(\mt)\big)\cap C\big([0,T];H^{1-4K}(\mt)\big),\quad\mathcal{X}_v=\big((L^2(0,T;L^2(\mt)),w\big),$$
$$\mathcal{X}_W=C\big([0,T];\mr\big).$$
Let us denote by $\mu_{u^\eta}$ the law of $u^\eta$ on $\mathcal{X}_u$, $\eta\in(0,1)$, by $\mu_{v^\eta}$ the law of
$$v^\eta:=\diver\bigg(\frac{\nabla u^\eta}{H(\nabla u^\eta)}\bigg)$$
on $\mathcal{X}_v$ and by $\mu_W$ the law of $W$ on $\mathcal{X}_W$. Their joint law on $\mathcal{X}$ is then denoted by $\mu^\eta$.

\begin{prop}\label{tight}
The set $\{\mu^\eta;\,\eta\in(0,1)\}$ is tight on $\mathcal{X}.$

\begin{proof}
First, we prove tightness of $\{\mu_{u^\eta};\,\eta\in(0,1)\}$ which follows directly from Proposition \ref{prop:est1} and \ref{prop:holder} by making use of the embeddings
\begin{equation*}
\begin{split}
C^{\alpha-1/p}([0,T];H^{2-4K}(\mt))&\hookrightarrow H^\lambda(0,T;H^{2-4K}(\mt)),\;\lambda<\alpha-1/p,\\
C^{\alpha-1/p}([0,T];H^{2-4K}(\mt))&\overset{c}{\hookrightarrow} C([0,T];H^{1-4K}(\mt)),\\
L^2(0,T;H^2(\mt))\cap H^{\lambda}(0,T;H^{2-4K}(\mt))&\overset{c}{\hookrightarrow} L^2(0,T;H^1(\mt)),\;\lambda>0.\\
\end{split}
\end{equation*}
Here the first embedding follows immediately for the definition of the spaces, the second one is a consequence of the Arzel\`a-Ascoli theorem and the third one can be found in \cite{fland}.
For $R>0$ let us define the set
\begin{multline*}
B_{R}=\big\{u\in L^2(0,T;H^{2}(\mt))\cap C^{\alpha-1/p}([0,T];H^{2-4K}(\mt));\\
\|u\|_{L^2(0,T;H^{2})}+\|u\|_{C^{\alpha-1/p}([0,T];H^{2-4K})}\leq R\big\}
\end{multline*}
which is thus relatively compact in $\mathcal{X}_u$. Moreover, by Propositions \ref{prop:est1}, \ref{prop:L2} and \ref{prop:holder}
\begin{equation*}
\begin{split}
\mu_{u^\eta}\big(B_{R}^C\big)&\leq\prst\bigg(\|u^\eta\|_{L^2(0,T;H^{2})}>\frac{R}{2}\bigg)+\prst\bigg(\|u^\eta\|_{C^{\alpha-1/p}([0,T];H^{2-4K})}>\frac{R}{2}\bigg)\\
&\leq\frac{C}{R^2}\stred\|u^\eta\|^2_{L^2(0,T;H^{2})}+\frac{C}{R^p}\stred\|u^\eta\|^p_{C^{\alpha-1/p}([0,T];H^{2-4K})}\leq\frac{C}{R^2}
\end{split}
\end{equation*}
hence given $\vartheta>0$ there exists $R>0$ such that
$$\mu_{u^\eta}(B_{R})\geq 1-\vartheta$$
which yields the claim.

Concerning the tightness of $\{\mu_{v^\eta};\,\eta\in(0,1)\}$ we proceed similarly and make use of the uniform estimate from Proposition \ref{prop:est1} together with the fact that for $R>0$ the set
$$B_R=\big\{v\in L^2(0,T;L^2(\mt));\;\|v\|_{L^2(0,T;L^2)}\leq R\big\}$$
is relatively compact in $\mathcal{X}_v$.

Since the law $\mu_W$ is tight as being a Radon measure on the Polish space $\mathcal{X}_W$, we conclude that also the set of the joint laws $\{\mu^\eta\semicol\,\eta\in(0,1)\}$ is tight and the proof is complete.
\end{proof}
\end{prop}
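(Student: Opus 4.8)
The plan is to prove tightness of the three marginal laws separately and then combine them, since tightness on a product space follows from tightness of each factor. For the $W$-component this is immediate: $\mu_W$ is a single Radon measure on the Polish space $C([0,T];\R)$ and is therefore tight. For the $v$-component, recall from Proposition \ref{prop:est1} (with $p=2$) that $\stred\int_0^T\|\Delta u^\eta\|_{L^2}^2\,\dif s$ is bounded uniformly in $\eta$; combined with the pointwise bound $|v^\eta|=|\diver(\nabla u^\eta/H(\nabla u^\eta))|\leq|\totdif^2u^\eta|/H(\nabla u^\eta)\lesssim|\totdif^2 u^\eta|$ and $\|\totdif^2 u^\eta\|_{L^2}=\|\Delta u^\eta\|_{L^2}$ (periodic boundary conditions), this gives a uniform bound for $\stred\|v^\eta\|_{L^2(0,T;L^2)}^2$. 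Since bounded balls in $L^2(0,T;L^2(\mt))$ are weakly compact (the space being a separable Hilbert space), Chebyshev's inequality applied in the weak topology yields tightness of $\{\mu_{v^\eta}\}$.

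The substantive part is tightness of $\{\mu_{u^\eta}\}$ on $\mathcal{X}_u=L^2(0,T;H^1)\cap C([0,T];H^{1-4K})$. Here I would exhibit explicit relatively compact sets $B_R$ defined by bounding the $L^2(0,T;H^2)$ norm and the $C^{\alpha-1/p}([0,T];H^{2-4K})$ norm, and verify their relative compactness via three embeddings: first, the Hölder-in-time space embeds into a fractional Sobolev space $H^\lambda(0,T;H^{2-4K})$ for $\lambda<\alpha-1/p$; second, the same Hölder space embeds compactly into $C([0,T];H^{1-4K})$ by Arzelà–Ascoli (using the compact Sobolev embedding $H^{2-4K}\hookrightarrow H^{1-4K}$ on the torus); third, $L^2(0,T;H^2)\cap H^\lambda(0,T;H^{2-4K})$ embeds compactly into $L^2(0,T;H^1)$ by an Aubin–Lions–Simon type result (the reference \cite{fland}). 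Granting these, I then bound $\mu_{u^\eta}(B_R^C)$ by a union bound and Chebyshev, using the uniform moment bounds $\stred\|u^\eta\|_{L^2(0,T;H^2)}^2\leq C$ (from Propositions \ref{prop:est1} and \ref{prop:L2}) and $\stred\|u^\eta\|_{C^{\alpha-1/p}([0,T];H^{2-4K})}^p\leq C_\eps$ (Proposition \ref{prop:holder}); this gives $\mu_{u^\eta}(B_R^C)\leq C/R^2$, which can be made smaller than any prescribed $\vartheta$ by taking $R$ large.

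Finally I would assemble the pieces: given $\vartheta>0$, choose compact sets $K_u\subset\mathcal{X}_u$, $K_v\subset\mathcal{X}_v$, $K_W\subset\mathcal{X}_W$ with $\mu_{u^\eta}(K_u),\mu_{v^\eta}(K_v),\mu_W(K_W)\geq 1-\vartheta/3$ uniformly in $\eta$, so that $K_u\times K_v\times K_W$ is compact in $\mathcal{X}$ and carries $\mu^\eta$-mass at least $1-\vartheta$ for every $\eta$.

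I expect the main obstacle to be purely bookkeeping rather than conceptual: the norms in Proposition \ref{prop:holder} are stated for the space $H^{2-4K}$ whereas the compactness of $B_R$ needs the three embeddings above to interlock at matching regularity indices, so I would need to be careful that $\alpha-1/p>0$ and that the fractional-time exponent $\lambda$ can be chosen positive and below $\alpha-1/p$ simultaneously — this is where the admissible range $\alpha\in(1/p,1/2)$, $p\in(2,2(1+\eps))$ is used. A secondary point worth stating explicitly is that for the $v$-marginal, weak compactness in $L^2(0,T;L^2)$ requires only a first-moment (indeed $L^2$-in-$(\omega)$) bound, which is exactly what Proposition \ref{prop:est1} with $p=2$ supplies uniformly in both $\eta$ and $\eps$; no higher moments are needed here, which is consistent with the later remarks that higher moments are unavailable.
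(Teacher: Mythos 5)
Your proposal is correct and follows essentially the same route as the paper: the same three embeddings and the sets $B_R$ bounded in $L^2(0,T;H^2)\cap C^{\alpha-1/p}([0,T];H^{2-4K})$ for the $u$-marginal, weak compactness of $L^2_{t,x}$-balls plus the second-derivative bound from Proposition \ref{prop:est1} for the $v$-marginal, Radonness on a Polish space for $W$, and a product-of-compacts assembly. The only differences are expository (you spell out the pointwise bound $|v^\eta|\lesssim|\totdif^2u^\eta|$ and the final gluing step, which the paper leaves implicit), so there is nothing substantive to add.
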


The path space $\mathcal{X}$ is not a Polish space and so our compactness argument is based on the
Jakubowski-Skorokhod representation theorem instead of the classical Skorokhod representation
theorem, see \cite{jakubow}. To be more precise, passing to a weakly convergent subsequence $\mu^n=\mu^{\eta_n}$ (and denoting by $\mu$ the limit law) we infer the following result.

\begin{prop}\label{prop:skor}
There exists a probability space $(\tilde{\Omega},\tilde{\mf},\tilde{\prst})$ with a sequence of $\mathcal{X}$-valued random variables $(\tilde{u}^n,\tilde v^n,\tilde{W}^n),\,n\in\mn,$ and $(\tilde{u},\tilde v, \tilde{W})$ such that
\begin{enumerate}
 \item the laws of $(\tilde{u}^n,\tilde v^n,\tilde{W}^n)$ and $(\tilde{u},\tilde v,\tilde{W})$ under $\,\tilde{\prst}$ coincide with $\mu^n$ and $\mu$, respectively,
 \item $(\tilde{u}^n,\tilde v^n,\tilde{W}^n)$ converges $\,\tilde{\prst}$-almost surely to $(\tilde{u},\tilde v,\tilde{W})$ in the topology of $\mathcal{X}$.
\end{enumerate}

\end{prop}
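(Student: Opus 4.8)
The plan is to apply the Jakubowski--Skorokhod representation theorem to the sequence of joint laws $\mu^n = \mu^{\eta_n}$ on the path space $\mathcal X = \mathcal X_u \times \mathcal X_v \times \mathcal X_W$. First I would check that $\mathcal X$ belongs to the class of topological spaces for which that theorem is available: it suffices (see \cite{jakubow}) that $\mathcal X$ be a Tychonoff space admitting a countable family of continuous real-valued functions separating points. Each factor has this property --- $\mathcal X_u$ and $\mathcal X_W$ are Polish (in particular separable metric), and $(L^2(0,T;L^2(\mt)),w)$, being the weak topology of a separable Hilbert space restricted to a countable union of closed balls (which is where the relevant measures are supported, by tightness), carries such a separating sequence, e.g.\ the maps $v \mapsto \langle v,\phi\rangle_{L^2(0,T;L^2)}$ for $\phi$ in a countable dense set. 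A finite product of such spaces is again of this type.

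Next I would invoke Proposition \ref{tight}, which gives tightness of $\{\mu^\eta;\,\eta\in(0,1)\}$ on $\mathcal X$. Since $\mathcal X$ is of the required type, tightness implies that $\{\mu^\eta\}$ is relatively compact for the topology of weak convergence of measures (Prokhorov-type argument in the Jakubowski framework), so we may extract a subsequence $\eta_n \to 0$ along which $\mu^n := \mu^{\eta_n} \rightharpoonup \mu$ for some Borel probability measure $\mu$ on $\mathcal X$. The Jakubowski--Skorokhod theorem then furnishes a single probability space $(\tilde\Omega,\tilde{\mf},\tilde{\prst})$ and $\mathcal X$-valued random variables $(\tilde u^n,\tilde v^n,\tilde W^n)$, $n\in\mn$, together with $(\tilde u,\tilde v,\tilde W)$, such that each $(\tilde u^n,\tilde v^n,\tilde W^n)$ has law $\mu^n$, the limit $(\tilde u,\tilde v,\tilde W)$ has law $\mu$, and $(\tilde u^n,\tilde v^n,\tilde W^n) \to (\tilde u,\tilde v,\tilde W)$ $\tilde{\prst}$-almost surely in the topology of $\mathcal X$. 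This is exactly assertions (i) and (ii) of the proposition.

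The only genuine subtlety --- and the step I would treat most carefully --- is the verification that $\mathcal X_v$ fits the hypotheses of the Jakubowski--Skorokhod theorem, since $(L^2(0,T;L^2(\mt)),w)$ is not metrizable and not Polish; this is precisely why the classical Skorokhod theorem cannot be used. The resolution is the standard one: the laws $\mu_{v^\eta}$ are all supported, up to arbitrarily small mass, on bounded balls $B_R$ of $L^2(0,T;L^2(\mt))$ (this is built into the tightness proof of Proposition \ref{tight}), and on such balls the weak topology is metrizable and the balls are weakly compact; hence the quasi-Polish / separating-sequence hypothesis holds on the relevant support. I would also note that the product $\mathcal X$ inherits the separating-sequence property coordinatewise, so no extra work is needed at the level of the joint law. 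Everything else --- relative compactness from tightness, extraction of the subsequence, and the conclusion --- is then a direct citation of \cite{jakubow}, and there is no further computation required.
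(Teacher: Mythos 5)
Your proposal is correct and follows essentially the same route as the paper, which likewise deduces the proposition directly from the tightness established in Proposition \ref{tight} together with the Jakubowski--Skorokhod representation theorem of \cite{jakubow}, after passing to a weakly convergent subsequence $\mu^n=\mu^{\eta_n}$. Your additional verification that $\mathcal{X}_v=(L^2(0,T;L^2(\mt)),w)$ admits a countable point-separating family of continuous functionals is exactly the (implicit) reason the Jakubowski framework applies, so no gap remains.
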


We are immediately able to identify $\tilde v^n$, $n\in\mn,$ and $\tilde v$.

\begin{cor}\label{cor:id1}
It holds true that
\begin{align*}
\tilde v^n&=\diver\bigg(\frac{\nabla \tilde u^n}{H(\nabla \tilde u^n)}\bigg)\quad\tilde\prst\text{-a.s.}\quad \forall n\in\mn,\\
\tilde v&= \diver\bigg(\frac{\nabla \tilde u}{H(\nabla \tilde u)}\bigg)\quad\tilde\prst\text{-a.s.}
\end{align*}

\begin{proof}
According to Proposition \ref{prop:est1}, the mapping
$$\supp \mu_{u^n}\rightarrow L^2(0,T;L^2(\mt)),\;u\mapsto \diver\bigg(\frac{\nabla u}{H(\nabla u)}\bigg)$$
is well-defined and measurable and hence, the first part of the statement follows directly from the equality of joint laws of $(u^n,v^n)$ and $(\tilde u^n,\tilde v^n)$. Identification of the limit $\tilde v$ follows easily using integration by parts together with the strong convergence of $\nabla \tilde u^n$ given by Proposition \ref{prop:skor}.
\end{proof}
\end{cor}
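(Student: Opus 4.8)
\subsection*{Proof proposal}

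The plan is to encode the sought relation as membership in a fixed Borel subset of the path space and to exploit that $(\tilde u^n,\tilde v^n)$ and $(u^n,v^n)$ have the same law under $\tilde\prst$ and $\prst$, respectively. Fix a countable family $\{\varphi_k\}_{k\in\mn}\subset C^\infty_c((0,T)\times\mt)$ that is dense for the $C^1$-norm, and set
\[
A:=\bigcap_{k\in\mn}\Big\{(u,v)\in\mathcal X_u\times\mathcal X_v:\ \int_0^T\!\!\int_{\mt} v\,\varphi_k\,\dif x\,\dif t=-\int_0^T\!\!\int_{\mt}\frac{\nabla u}{H(\nabla u)}\cdot\nabla\varphi_k\,\dif x\,\dif t\Big\}.
\]
The map $u\mapsto\nabla u$ is continuous from $\mathcal X_u$ to $L^2(0,T;L^2_x)$, the function $p\mapsto p/\sqrt{1+|p|^2}$ is globally $1$-Lipschitz and bounded on $\mr^N$, and $v\mapsto\int_0^T\!\int_{\mt}v\,\varphi_k$ is continuous on $\mathcal X_v=(L^2(0,T;L^2_x),w)$; hence each set in the intersection is (weakly) closed, and $A$ is a Borel subset of $\mathcal X_u\times\mathcal X_v$. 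By Proposition \ref{prop:est1} the distributional divergence $v^n=\diver(\nabla u^n/H(\nabla u^n))$ lies in $L^2(0,T;L^2_x)$, so $(u^n,v^n)\in A$ $\prst$-almost surely; by Proposition \ref{prop:skor}(i) the same holds for $(\tilde u^n,\tilde v^n)$ under $\tilde\prst$, which is precisely the first assertion.

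For the identification of the limit, recall from Proposition \ref{prop:skor}(ii) that, $\tilde\prst$-a.s., $\tilde u^n\to\tilde u$ in $L^2(0,T;H^1_x)$ and $\tilde v^n\rightharpoonup\tilde v$ in $L^2(0,T;L^2_x)$. The strong convergence $\nabla\tilde u^n\to\nabla\tilde u$ in $L^2$, together with the boundedness and continuity of $p\mapsto p/\sqrt{1+|p|^2}$, yields by dominated convergence that $\nabla\tilde u^n/H(\nabla\tilde u^n)\to\nabla\tilde u/H(\nabla\tilde u)$ in $L^2(0,T;L^2_x)$. For each fixed $k$ one may then pass to the limit in the identity defining $A$: the left-hand side converges because $\tilde v^n\rightharpoonup\tilde v$ and $\varphi_k\in L^2(0,T;L^2_x)$, the right-hand side by the strong convergence just noted. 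Letting $n\to\infty$ and using the $C^1$-density of $\{\varphi_k\}$ together with the continuity of both sides under $C^1$-convergence of the test function, we obtain $\int_0^T\!\int_{\mt}\tilde v\,\varphi=-\int_0^T\!\int_{\mt}\frac{\nabla\tilde u}{H(\nabla\tilde u)}\cdot\nabla\varphi$ for all $\varphi\in C^\infty_c((0,T)\times\mt)$, i.e.\ $\tilde v=\diver(\nabla\tilde u/H(\nabla\tilde u))$ in the sense of distributions.

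The only delicate point is that $u\mapsto\diver(\nabla u/H(\nabla u))$ is not a well-defined continuous map on all of $\mathcal X_u$ — without the mean curvature bound the divergence need not be in $L^2_x$ — so one cannot simply push forward a continuous map. Writing the relation in its weak (distributional) form, as above, circumvents this: only the globally defined, Lipschitz quantity $\nabla u/H(\nabla u)$ and the weakly continuous pairing with $\tilde v^n$ enter, and the requisite estimate is already provided by Proposition \ref{prop:est1}. Everything else is routine.
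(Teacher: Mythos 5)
Your proposal is correct and follows essentially the same route as the paper: the relation for fixed $n$ is transferred through the equality of the joint laws of $(u^n,v^n)$ and $(\tilde u^n,\tilde v^n)$ (you encode it as membership in a closed subset of $\mathcal X_u\times\mathcal X_v$ via the weak formulation, whereas the paper invokes measurability of $u\mapsto\diver(\nabla u/H(\nabla u))$ on $\supp\mu_{u^n}$ — the same idea, made slightly more explicit), and the limit $\tilde v$ is identified by integration by parts combined with the strong convergence of $\nabla\tilde u^n$ in $L^2(0,T;L^2)$ from Proposition \ref{prop:skor} and the Lipschitz bound on $p\mapsto p/H(p)$, against the weak convergence of $\tilde v^n$. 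No gaps.
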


Finally, let $(\tilde{\mf}_t)$ be the $\tilde{\prst}$-augmented canonical filtration of the process $(\tilde{u},\tilde{W})$, that is
$$\tilde{\mf}_t=\sigma\big(\sigma\big(\varrho_t\tilde{u},\varrho_t\tilde{W}\big)\cup\big\{N\in\tilde{\mf};\;\tilde{\prst}(N)=0\big\}\big),\quad t\in[0,T],$$
where $\varrho_t$ is the operator of restriction to the interval $[0,t]$ acting on various path spaces. In particular, if $X$ stands for one of the path spaces $\mathcal{X}_u$ or $\mathcal{X}_W$ and $t\in[0,T]$, we define
\begin{equation*}\label{restr}
\begin{split}
\varrho_t:X\rightarrow X|_{[0,t]},\quad f&\mapsto f|_{[0,t]}.
\end{split}
\end{equation*}
Note that $\tilde v$ is also adapted with respect to $(\tilde \mf_t)$ due to Corollary \ref{cor:id1}.

\subsection{Identification of the limit}
\label{subsec:identif}

The aim of this subsection is to establish existence of a weak martingale solution to \eqref{eq:approx1}. Similarly to \eqref{eq:smcf2} and \eqref{eq:smcf3}, we rewrite \eqref{eq:approx1} into a more convenient form given by
\begin{equation}\label{eq:approx2}
\dif u = \Big(\frac{1}{2}+\varepsilon\Big)\Delta u\,\dif t+\frac{1}{2}H(\nabla u)\diver\bigg(\frac{\nabla u}{H(\nabla u)}\bigg)\,\dif t
+H(\nabla u)\,\dif W
\end{equation}
and
\begin{equation}\label{eq:approx33}
\begin{split}
\dif  u &= \varepsilon\Delta u\,\dif t+H(\nabla u)\diver\bigg(\frac{\nabla u}{H(\nabla u)}\bigg)\dif t+\frac{1}{2}\frac{(\nabla u)^*}{H(\nabla u)}\totdif^2u\frac{\nabla u}{H(\nabla u)}\dif t+H(\nabla u)\dif W.
\end{split}
\end{equation}

\begin{thm}\label{prop:martsol}
$\big((\tilde{\Omega},\tilde{\mf},(\tilde{\mf}_t),\tilde{\prst}),\tilde{W},\tilde{u}\big)$
is a strong martingale solution to \eqref{eq:approx2} with the initial law $\Lambda^\varepsilon$, i.e. $\tilde\prst\circ \tilde u(0)^{-1}=\Lambda^\varepsilon$ and
\begin{equation}\label{eq:strong}
\tilde u(t)=\tilde u(0)+\Big(\frac{1}{2}+\varepsilon\Big)\int_0^t\Delta \tilde u\,\dif s+\frac{1}{2}\int_0^t H(\nabla\tilde u)\tilde v\,\dif s+\int_0^t H(\nabla\tilde u)\,\dif \tilde W,
\end{equation}
with
\begin{equation}\label{eq:v}
\tilde v=\diver\bigg(\frac{\nabla \tilde u}{H(\nabla \tilde u)}\bigg)
\end{equation}
holds true for all $t\in[0,T]$, almost everywhere in $(\omega,x)\in\tilde\Omega\times\mt$.

\end{thm}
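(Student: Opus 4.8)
The plan is to pass to the limit $\eta\searrow 0$ in the weak formulation of \eqref{eq:approx} satisfied by $\tilde u^n$ on the new probability space, and then to identify the stochastic integral term. Since the laws of $(\tilde u^n,\tilde v^n,\tilde W^n)$ and $(u^{\eta_n},v^{\eta_n},W)$ coincide by Proposition~\ref{prop:skor}(i), and since the solution map to \eqref{eq:approx} is a measurable functional of the data, the tilded processes again solve \eqref{eq:approx} (in the mild, hence weak sense) with Wiener process $\tilde W^n$ and initial law $\Lambda^\varepsilon$; this is a standard argument using that the relevant equality of integral identities holds on a measurable subset of the path space of full measure under $\mu^n$. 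In particular, for each fixed $\varphi\in C^\infty(\mt)$ and each $t$, the identity
\begin{equation*}
\langle\tilde u^n(t),\varphi\rangle=\langle\tilde u^n(0),\varphi\rangle+\Big(\tfrac12+\varepsilon\Big)\int_0^t\langle\Delta\tilde u^n,\varphi\rangle\dif s+\tfrac12\int_0^t\langle H(\nabla\tilde u^n)\tilde v^n,\varphi\rangle\dif s-\eta_n\int_0^t\langle\Delta^{2K}\tilde u^n,\varphi\rangle\dif s+\int_0^t\langle H(\nabla\tilde u^n)\,\dif\tilde W^n,\varphi\rangle
\end{equation*}
holds $\tilde\prst$-a.s. (here I have used the form \eqref{eq:approx2} of the equation modulo the regularizing term).

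Next I would pass to the limit term by term. The strong convergence $\tilde u^n\to\tilde u$ in $L^2(0,T;H^1)\cap C([0,T];H^{1-4K})$ from Proposition~\ref{prop:skor}(ii), combined with the uniform bounds of Proposition~\ref{prop:est1}, handles the linear terms $\langle\tilde u^n(t),\varphi\rangle$, $\langle\tilde u^n(0),\varphi\rangle$ and $\int_0^t\langle\Delta\tilde u^n,\varphi\rangle\dif s=-\int_0^t\langle\nabla\tilde u^n,\nabla\varphi\rangle\dif s$; the term $\eta_n\int_0^t\langle\Delta^{2K}\tilde u^n,\varphi\rangle\dif s$ vanishes because $\langle\Delta^{2K}\tilde u^n,\varphi\rangle=\langle\Delta\tilde u^n,\Delta^{2K-1}\varphi\rangle$ is bounded in $L^2(\tilde\Omega\times[0,T])$ uniformly in $n$ while the prefactor $\eta_n\to 0$. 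For the mean curvature term $\int_0^t\langle H(\nabla\tilde u^n)\tilde v^n,\varphi\rangle\dif s$, I would write $H(\nabla\tilde u^n)\tilde v^n=\diver\big(\nabla\tilde u^n/H(\nabla\tilde u^n)\big)H(\nabla\tilde u^n)$; after integration by parts against $\varphi$ this becomes $-\int_0^t\langle\nabla\tilde u^n/H(\nabla\tilde u^n),\nabla(H(\nabla\tilde u^n)\varphi)\rangle\dif s$, and the strong $L^2(0,T;H^1)$-convergence of $\tilde u^n$ (which gives a.e.\ convergence of $\nabla\tilde u^n$ along a subsequence and hence of the bounded continuous function $\nabla\tilde u^n/H(\nabla\tilde u^n)$) together with the weak $L^2$-convergence of $\totdif^2\tilde u^n$ (equivalently, of $\tilde v^n$ via Corollary~\ref{cor:id1}) lets me pass to the limit; alternatively one exploits $\tilde v^n\rightharpoonup\tilde v$ weakly in $L^2(0,T;L^2)$ directly against the strongly convergent test-field $H(\nabla\tilde u^n)\varphi$.

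The main obstacle is the stochastic integral. I would introduce the martingales $M^n(t):=\langle\tilde u^n(t),\varphi\rangle-\langle\tilde u^n(0),\varphi\rangle-\int_0^t(\cdots)\dif s$ (all the drift terms) with respect to the filtration generated by $(\tilde u^n,\tilde W^n)$, together with the quadratic-variation functionals $[M^n]_t=\int_0^t\langle H(\nabla\tilde u^n),\varphi\rangle^2\dif s$ and the cross-variation $[M^n,\tilde W^n]_t=\int_0^t\langle H(\nabla\tilde u^n),\varphi\rangle\dif s$, and then verify that these identities pass to the limit. This requires a uniform integrability argument: from Proposition~\ref{prop:est1} with $p\in(2,2(1+\varepsilon)]$ one has a uniform bound on $\stred\|\nabla\tilde u^n\|_{L^2}^p$, hence on $\stred\big(\int_0^T\langle H(\nabla\tilde u^n),\varphi\rangle^2\dif s\big)^{p/2}$, so the relevant families are uniformly integrable and the a.s.\ convergence from Proposition~\ref{prop:skor} upgrades to convergence of expectations of the martingale/quadratic-variation identities tested against bounded continuous functionals of the restricted paths $\varrho_s(\tilde u^n,\tilde W^n)$. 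A standard lemma (e.g.\ as in \cite{hofse}, \cite{on1}) then identifies the limit: $M(t)$ is an $(\tilde\mf_t)$-martingale with $[M]_t=\int_0^t\langle H(\nabla\tilde u),\varphi\rangle^2\dif s$ and $[M,\tilde W]_t=\int_0^t\langle H(\nabla\tilde u),\varphi\rangle\dif s$, which forces $M(t)=\int_0^t\langle H(\nabla\tilde u)\,\dif\tilde W,\varphi\rangle$. Since $\varphi$ ranges over a dense set this yields \eqref{eq:strong}; formula \eqref{eq:v} is already contained in Corollary~\ref{cor:id1}, and the initial law is preserved because $\mu^n$-marginals of $\tilde u^n(0)$ equal $\Lambda^\varepsilon$ and $\tilde u^n(0)\to\tilde u(0)$. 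I would note that here, unlike in the more delicate $\eps\to 0$ passage, the semilinear regularization gives enough integrability that continuity in time and the quadratic-variation identification cause no trouble, so the classical martingale-identification scheme of Subsection~\ref{subsec:identif} applies without the refinements needed later.
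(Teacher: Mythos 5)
Your plan is correct and follows essentially the same route as the paper: transfer of the martingale identities for $\tilde M^n$, $(\tilde M^n)^2-\int_0^\cdot\langle H(\nabla\tilde u^n),\varphi\rangle^2\,\dif r$ and $\tilde M^n\tilde W^n-\int_0^\cdot\langle H(\nabla\tilde u^n),\varphi\rangle\,\dif r$ via equality of laws tested against continuous functionals of the restricted paths, passage to the limit using the weak convergence of $\tilde v^n$ against the strong $L^2$ convergence of $H(\nabla\tilde u^n)$ plus uniform integrability from Proposition \ref{prop:est1} with $p>2$, identification of the stochastic integral in the spirit of \cite{on1}, and finally the upgrade from the weak to the strong form \eqref{eq:strong} via the uniform $H^2$ bound. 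The only real difference is your preliminary step of transferring the regularized equation itself to the new probability space through a ``measurable solution map,'' which the paper avoids (and which, as stated, glosses over the fact that the stochastic integral is not a pathwise Borel functional of $(\tilde u^n,\tilde W^n)$); this step is in any case redundant, since your subsequent martingale-identification argument is precisely what makes the transfer rigorous.
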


The proof is based on a new general method of constructing martingale solutions of SPDEs, that does not rely on any kind of martingale representation theorem and therefore holds independent interest especially in situations where these representation theorems are no longer available.

First, we show that $\big((\tilde{\Omega},\tilde{\mf},(\tilde{\mf}_t),\tilde{\prst}),\tilde{W},\tilde{u}\big)$
is a weak martingale solution to \eqref{eq:approx2} with the initial law $\Lambda^\varepsilon$, i.e. for every $\varphi\in C^\infty(\mt)$
\begin{equation}\label{eq:weak}
 \begin{split}
  \langle \tilde u(t),\varphi\rangle&=\langle \tilde u(0),\varphi\rangle+\Big(\frac{1}{2}+\varepsilon\Big)\int_0^t\langle \tilde u,\Delta\varphi\rangle\dif s+\frac{1}{2}\int_0^t\langle \sqt\tilde v,\varphi\rangle\dif s\\
&\qquad+\int_0^t\langle \sqt\dif\tilde W,\varphi\rangle,
 \end{split}
\end{equation}
where $\tilde v$ was defined in \eqref{eq:v}.

Towards this end, let us define for all $t\in[0,T]$ and a test function $\varphi\in C^\infty(\mt)$
\begin{equation*}
\begin{split}
M^n(t)&=\big\langle u^n(t),\varphi\big\rangle-\big\langle u^n_0,\varphi\big\rangle-\Big(\frac{1}{2}+\varepsilon\Big)\int_0^t\langle u^n,\Delta\varphi\rangle\dif s\\
&\qquad-\frac{1}{2}\int_0^t\langle H(\nabla u^n) v^n,\varphi\rangle\dif s+\eta_n\int_0^t\big\langle u^n,\Delta^{2K}\varphi\big\rangle\,\dif s,\quad n\in\mn,\\
\tilde{M}^n(t)&=\big\langle \tilde u^n(t),\varphi\big\rangle-\big\langle \tilde u^n(0),\varphi\big\rangle-\Big(\frac{1}{2}+\varepsilon\Big)\int_0^t\langle \tilde u^n,\Delta\varphi\rangle\dif s\\
&\qquad-\frac{1}{2}\int_0^t\langle H(\nabla \tilde u^n)\tilde v^n,\varphi\rangle\dif s+\eta_n\int_0^t\big\langle \tilde u^n,\Delta^{2K}\varphi\big\rangle\,\dif s,\quad n\in\mn,\\
\tilde{M}(t)&=\big\langle \tilde u(t),\varphi\big\rangle-\big\langle \tilde u(0),\varphi\big\rangle-\Big(\frac{1}{2}+\varepsilon\Big)\int_0^t\langle \tilde u,\Delta\varphi\rangle\dif s-\frac{1}{2}\int_0^t\langle \sqt\tilde v,\varphi\rangle\dif s,
\end{split}
\end{equation*}
we denoted
$$v^n=\diver\bigg(\frac{\nabla  u^n}{H(\nabla u^n)}\bigg),\qquad\tilde v^n=\diver\bigg(\frac{\nabla \tilde u^n}{H(\nabla\tilde u^n)}\bigg).$$
Hereafter, times $s,t\in[0,T],\,s\leq t,$ and a continuous function
\begin{equation*}
\gamma:\mathcal{X}_u|_{[0,s]}\times \mathcal{X}_W|_{[0,s]}\longrightarrow [0,1]
\end{equation*}
will be fixed but otherwise arbitrary. The proof is an immediate consequence of the following two lemmas.

\begin{lemma}\label{lemma:wiener}
The process $\tilde{W}$ is a $(\tilde{\mf}_t)$-Wiener process.

\begin{proof}
Obviously, $\tilde{W}$ is a Wiener process and is $(\tilde{\mf}_t)$-adapted. According to the L\'evy martingale characterization theorem, it remains to show that it is also a $(\tilde{\mf}_t)$-martingale.
It holds true
$$\tilde{\stred}\,\gamma\big(\varrho_s \tilde{u}^n,\varrho_s\tilde{W}^n\big)\big[\tilde{W}^n(t)-\tilde{W}^n(s)\big]=\stred\,\gamma\big(\varrho_s u^n,\varrho_s W\big)\big[W(t)-W(s)\big]=0$$
since $W$ is a martingale and the laws of $(\tilde{u}^n,\tilde{W}^n)$ and $(u^n,W)$ coincide.
Next, the uniform estimate
$$\sup_{n\in\mn}\tilde{\stred}|\tilde{W}^n(t)|^2=\sup_{n\in\mn}\stred|W(t)|^2<\infty$$
and the Vitali convergence theorem yields
$$\tilde{\stred}\,\gamma\big(\varrho_s\tilde{u},\varrho_s\tilde{W}\big)\big[\tilde{W}(t)-\tilde{W}(s)\big]=0$$
which finishes the proof.
\end{proof}

\end{lemma}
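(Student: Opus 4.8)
The plan is to prove Lemma~\ref{lemma:wiener} by verifying the three defining properties of an $(\tilde\mf_t)$-Wiener process: adaptedness, the distributional identity (that $\tilde W$ is a Brownian motion), and the martingale property relative to the filtration $(\tilde\mf_t)$. The first two are essentially built into the construction. By definition of $(\tilde\mf_t)$ as the $\tilde\prst$-augmented canonical filtration of $(\tilde u,\tilde W)$, the process $\tilde W$ is automatically $(\tilde\mf_t)$-adapted. That $\tilde W$ is a standard real Brownian motion follows from part~(i) of Proposition~\ref{prop:skor}: the law of $\tilde W$ under $\tilde\prst$ equals $\mu_W$, the law of $W$, so all finite-dimensional distributions and continuity of paths are inherited from $W$. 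Hence, by the L\'evy martingale characterization theorem, it suffices to show that $\tilde W$ is an $(\tilde\mf_t)$-martingale, and for this it is enough to test against the bounded continuous functionals $\gamma(\varrho_s\tilde u,\varrho_s\tilde W)$ fixed above, since these generate $\tilde\mf_s$.

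The key step is the following chain of identities. First, using the equality of laws of $(\tilde u^n,\tilde W^n)$ and $(u^n,W)$ on $\mathcal X_u\times\mathcal X_W$ together with the fact that $\gamma\big(\varrho_s(\cdot),\varrho_s(\cdot)\big)\big[(\cdot)(t)-(\cdot)(s)\big]$ is a measurable (indeed continuous) functional on that space, one transfers the expectation: $\tilde\stred\,\gamma(\varrho_s\tilde u^n,\varrho_s\tilde W^n)[\tilde W^n(t)-\tilde W^n(s)] = \stred\,\gamma(\varrho_s u^n,\varrho_s W)[W(t)-W(s)]$. The right-hand side vanishes because $W$ is a martingale with respect to its own filtration and $\gamma(\varrho_s u^n,\varrho_s W)$ is $\mf_s$-measurable (here one uses that $u^n$ is $(\mf_t)$-adapted, which holds since $u^n$ solves the regularized equation~\eqref{eq:approx} on the original stochastic basis). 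Then one passes $n\to\infty$: by part~(ii) of Proposition~\ref{prop:skor}, $(\tilde u^n,\tilde W^n)\to(\tilde u,\tilde W)$ $\tilde\prst$-a.s. in the topology of $\mathcal X_u\times\mathcal X_W$, and since $\gamma$ is continuous and bounded and evaluation at fixed times is continuous on $C([0,T];\mr)$, the integrand converges $\tilde\prst$-a.s.; the uniform bound $\sup_n\tilde\stred|\tilde W^n(t)|^2=\sup_n\stred|W(t)|^2<\infty$ gives uniform integrability, so Vitali's convergence theorem yields $\tilde\stred\,\gamma(\varrho_s\tilde u,\varrho_s\tilde W)[\tilde W(t)-\tilde W(s)]=0$. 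Since $s\leq t$ and $\gamma$ were arbitrary, $\tilde W$ is an $(\tilde\mf_t)$-martingale.

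The only genuinely delicate point — and the one I would be most careful about — is the measurability/continuity of the test functional on the joint path space, specifically that evaluation-at-a-fixed-time composed with the restriction map is continuous on $\mathcal X_u$ and $\mathcal X_W$, and that the product functional is bounded. For $\mathcal X_W=C([0,T];\mr)$ this is immediate. For $\mathcal X_u=L^2(0,T;H^1)\cap C([0,T];H^{1-4K})$, the map $\varrho_s$ is continuous into the corresponding space over $[0,s]$ by definition, and $\gamma$ is assumed continuous into $[0,1]$, so boundedness is automatic; the increment $\tilde W^n(t)-\tilde W^n(s)$ is handled purely in the $\mathcal X_W$ factor. One should also note that the $L^2$-uniform integrability bound transferred via equality of laws is what licenses Vitali's theorem; no higher moments of $\tilde W^n$ are needed since $W$ itself is Gaussian with all moments finite. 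With these observations the argument is complete.
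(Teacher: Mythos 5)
Your proposal is correct and follows essentially the same argument as the paper: transfer the martingale property of $W$ to $\tilde W^n$ via equality of laws tested against $\gamma(\varrho_s\cdot,\varrho_s\cdot)$, pass to the limit using the a.s.\ convergence from Proposition \ref{prop:skor} together with the uniform second-moment bound and Vitali's theorem, and conclude by the L\'evy characterization. The extra remarks you add on continuity of the restriction/evaluation functionals and adaptedness of $u^n$ are just explicit versions of steps the paper leaves implicit.
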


\begin{lemma}\label{lem:mart}
The processes 
$$\tilde{M},\qquad\tilde{M}^2-\int_0^\tec\big\langle H(\nabla\tilde{u}),\varphi\big\rangle^2\,\dif r,\qquad\tilde{M}\tilde{W}-\int_0^\tec\big\langle H(\nabla \tilde{u}),\varphi\big\rangle\,\dif r$$
are $(\tilde{\mf}_t)$-martingales. 

\begin{proof}
Here, we use the same approach as in the previous lemma. For all $n\in\mn$, the process
$$M^n=\int_0^\tec\big\langle H(\nabla u^n)\,\dif W(r),\varphi\big\rangle$$
is a square integrable $(\mf_t)$-martingale by \eqref{eq:est1} and therefore
$$(M^n)^2-\int_0^\tec\big\langle H(\nabla u^n),\varphi\big\rangle^2\,\dif r,\qquad M^n W-\int_0^\tec\big\langle H(\nabla u^n),\varphi\big\rangle\,\dif r$$
are $(\mf_t)$-martingales. Besides, it follows from the equality of laws that
\begin{equation}\label{exp1}
\begin{split}
&\tilde{\stred}\,\gamma\big(\varrho_s \tilde{u}^n,\varrho_s\tilde{W}^n\big)\big[\tilde{M}^n(t)-\tilde{M}^n(s)\big]\\
&\quad=\stred\,\gamma\big(\varrho_s u^n,\varrho_s W\big)\big[M^n(t)-M^n(s)\big]=0,
\end{split}
\end{equation}
\begin{equation}\label{exp2}
\begin{split}
&\tilde{\stred}\,\gamma\big(\varrho_s \tilde{u}^n,\varrho_s\tilde{W}^n\big)\bigg[(\tilde{M}^n)^2(t)-(\tilde{M}^n)^2(s)-\int_s^t\big\langle H(\nabla\tilde{u}^n),\varphi\big\rangle^2\,\dif r\bigg]\\
&=\stred\,\gamma\big(\varrho_s u^n,\varrho_s W\big)\bigg[(M^n)^2(t)-(M^n)^2(s)-\int_s^t\big\langle H(\nabla u^n),\varphi\big\rangle^2\,\dif r\bigg]=0,
\end{split}
\end{equation}
\begin{equation}\label{exp3}
\begin{split}
&\tilde{\stred}\,\gamma\big(\varrho_s \tilde{u}^n,\varrho_s\tilde{W}^n\big)\bigg[\tilde{M}^n(t)\tilde{W}^n(t)-\tilde{M}^n(s)\tilde{W}^n(s)-\int_s^t\big\langle H(\nabla\tilde{u}^n),\varphi\big\rangle\,\dif r\bigg]\\
&=\stred\,\gamma\big(\varrho_s u^n,\varrho_s W\big)\bigg[M^n(t)W(t)-M^n(s)W(s)-\int_s^t\big\langle H(\nabla u^n),\varphi\big\rangle\,\dif r\bigg]=0.
\end{split}
\end{equation}

In order to pass to the limit in \eqref{exp1}, \eqref{exp2} and \eqref{exp3}, let us first establish the convergence $\tilde M^n(t)\rightarrow\tilde M(t)$ a.s. for all $t\in[0,T]$. Let us only make few comments on the mean curvature term. We recall that according to Proposition \ref{prop:skor} and Corollary \ref{cor:id1} it holds true that
$$\diver\bigg(\frac{\nabla\tilde u^n}{H(\nabla\tilde u^n)}\bigg)\rightharpoonup\diver\bigg(\frac{\nabla\tilde u}{H(\nabla\tilde u)}\bigg)\quad\text{in}\quad L^2(0,T;L^2(\mt)))\quad\tilde\prst\text{-a.s.}$$
Moreover,
$$H(\nabla \tilde u^n)\rightarrow H(\nabla \tilde u)\quad\text{in}\quad L^2(0,T;L^2(\mt)))\quad\tilde\prst\text{-a.s.}$$
and therefore
$$\diver\bigg(\frac{\nabla\tilde u^n}{H(\nabla\tilde u^n)}\bigg)H(\nabla \tilde u^n)\rightharpoonup\diver\bigg(\frac{\nabla\tilde u}{H(\nabla\tilde u)}\bigg)H(\nabla \tilde u)$$
in $L^1(0,T;L^1(\mt)))$ a.s. which yields the desired convergence.

Besides, we observe that according to \eqref{exp1}, \eqref{exp2}, \eqref{exp3} it follows for every $n\in\mn$ that
$$\tilde M^n=\int_0^\tec\big\langle H(\nabla \tilde u^n),\varphi\big\rangle\,\dif \tilde W^n\qquad\tilde\prst\text{-a.s}.$$
Therefore, the passage to the limit in \eqref{exp1} and in the first terms on the left hand side of \eqref{exp2} and \eqref{exp3} (and the same for the right hand side) can be justified by using the convergence $\tilde M^n(t)\rightarrow\tilde M(t)$ together with the uniform integrability given by Proposition \ref{prop:est1}:
\begin{align*}
\tilde\stred\big|\tilde M^n(t)\big|^{p}&\leq C\,\tilde\stred\bigg(\int_0^{t}\big\langle H(\nabla \tilde u^n),\varphi\big\rangle^2\,\dif t\bigg)^{p/2}\\
&\leq C\bigg(1+\tilde \stred\int_0^T\|\nabla \tilde u^n\|_{L^2}^p\,\dif t\bigg)\leq C<\infty.
\end{align*}
This estimate also yields the necessary uniform integrability that together with
$$\big\langle H(\nabla\tilde{u}^n),\varphi\big\rangle\rightarrow\big\langle H(\nabla\tilde{u}^n),\varphi\big\rangle\quad \text{a.e.}\;(\omega,r)$$
justifies the passage to the limit in the remaining terms in \eqref{exp2} and \eqref{exp3} which completes the proof.
\end{proof}
\end{lemma}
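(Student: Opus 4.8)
The plan is to run the elementary martingale-identification scheme that has just been set up, and push it through a limit passage justified by the Vitali convergence theorem. First I would record that, by the weak (PDE) formulation of the regularized equation \eqref{eq:approx} tested against $\varphi$ — rewriting the quasilinear term via \eqref{eq:H-term} — one has, for every $n$, the identity $M^n=\int_0^\tec\langle H(\nabla u^n)\,\dif W,\varphi\rangle$ on the original stochastic basis. By \eqref{eq:est1} the integrand lies in $L^p(\Omega\times[0,T])$ for $p$ up to $2(1+\varepsilon)$, so $M^n$ is a square-integrable $(\mf_t)$-martingale with quadratic variation $\int_0^\tec\langle H(\nabla u^n),\varphi\rangle^2\,\dif r$ and cross-variation $\int_0^\tec\langle H(\nabla u^n),\varphi\rangle\,\dif r$ against $W$; hence the three bracketed processes built from $M^n$ and $W$ are $(\mf_t)$-martingales. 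Since $u\mapsto v^n$ and $u\mapsto M^n$ are Borel on $\supp\mu_{u^n}$ (using \eqref{eq:est1} and Corollary \ref{cor:id1}) and the laws of $(\tilde u^n,\tilde W^n)$ and $(u^n,W)$ coincide, these martingale properties transfer verbatim to the tilde objects, i.e.\ \eqref{exp1}, \eqref{exp2}, \eqref{exp3} hold for every $s\le t$ and every continuous functional $\gamma$ of the restricted paths; in particular $\tilde M^n=\int_0^\tec\langle H(\nabla\tilde u^n),\varphi\rangle\,\dif\tilde W^n$.

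The core of the argument is then the limit $n\to\infty$ in \eqref{exp1}--\eqref{exp3}. I would first prove the pointwise-in-time a.s.\ convergence $\tilde M^n(t)\to\tilde M(t)$: the linear terms converge by the $\tilde\prst$-a.s.\ convergence $\tilde u^n\to\tilde u$ in $\mathcal X_u$ from Proposition \ref{prop:skor}, and the $\eta_n\Delta^{2K}$ term vanishes by the uniform $\mathcal X_u$-bound together with $\eta_n\to0$. The one term requiring care is $\tfrac12\int_0^t\langle H(\nabla\tilde u^n)\tilde v^n,\varphi\rangle\,\dif s$, which I would handle by weak-times-strong convergence in $L^2(0,T;L^2(\mt))$: $\tilde v^n\rightharpoonup\tilde v$ weakly (Proposition \ref{prop:skor} and Corollary \ref{cor:id1}), while $H(\nabla\tilde u^n)\to H(\nabla\tilde u)$ strongly in $L^2(0,T;L^2(\mt))$ because $\nabla\tilde u^n\to\nabla\tilde u$ strongly there and $|H(a)-H(b)|\le|a-b|$; consequently the product converges weakly in $L^1(0,T;L^1(\mt))$, which suffices after pairing with the smooth $\varphi$.

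Next I would upgrade these a.s.\ convergences to convergence of the expectations in \eqref{exp1}--\eqref{exp3} via Vitali's theorem. The uniform integrability needed for the first-order terms, for the squares $(\tilde M^n)^2$, and for the products $\tilde M^n\tilde W^n$ comes from a uniform $L^p$-bound on $\tilde M^n(t)$ with some $p>2$ (take $2<p<2(1+\varepsilon)$): by Burkholder--Davis--Gundy and $\langle H(\nabla\tilde u^n),\varphi\rangle^2\le C(1+\|\nabla\tilde u^n\|_{L^2}^2)$ one gets $\tilde\stred|\tilde M^n(t)|^p\le C(1+\tilde\stred\int_0^T\|\nabla\tilde u^n\|_{L^2}^p\,\dif s)\le C$ from \eqref{eq:est1}, and $\tilde W$ has all moments. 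The same bound, together with the a.e.\ $(\omega,r)$ convergence $\langle H(\nabla\tilde u^n),\varphi\rangle\to\langle H(\nabla\tilde u),\varphi\rangle$, disposes of the correction integrals $\int_s^t\langle H(\nabla\tilde u^n),\varphi\rangle^2\,\dif r$ and $\int_s^t\langle H(\nabla\tilde u^n),\varphi\rangle\,\dif r$. Passing to the limit in \eqref{exp1}--\eqref{exp3} yields the same identities with $\tilde M,\tilde W,\tilde u$ in place of $\tilde M^n,\tilde W^n,\tilde u^n$; since $s\le t$ and $\gamma$ were arbitrary and $(\tilde\mf_t)$ is the canonical filtration of $(\tilde u,\tilde W)$, this is exactly the statement that $\tilde M$, $\tilde M^2-\int_0^\tec\langle H(\nabla\tilde u),\varphi\rangle^2\,\dif r$ and $\tilde M\tilde W-\int_0^\tec\langle H(\nabla\tilde u),\varphi\rangle\,\dif r$ are $(\tilde\mf_t)$-martingales.

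I expect the main obstacle to be precisely the mean-curvature product term: only weak $L^2(0,T;L^2(\mt))$ convergence of $\tilde v^n$ is available and no higher moments of $\nabla\tilde u^n$ are controlled, so one must argue by exact weak-times-strong compactness and accept that the product converges only in $L^1$ in the space-time variables — enough here because it is tested against a fixed smooth $\varphi$. The reason this lemma can be carried out at the viscous level ($\eta\to0$, $\varepsilon$ fixed) is exactly that \eqref{eq:est1} still supplies an $L^p$-bound with $p>2$; at the final stage $\varepsilon\to0$ this is lost, which is why the later sections must invoke the densely defined and local martingale techniques rather than the clean Vitali argument used here.
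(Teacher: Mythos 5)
Your proposal is correct and follows essentially the same route as the paper: transfer of the martingale identities \eqref{exp1}--\eqref{exp3} via equality of laws, a.s.\ convergence $\tilde M^n(t)\to\tilde M(t)$ with the mean-curvature term handled by weak (for $\tilde v^n$) times strong (for $H(\nabla\tilde u^n)$, using the Lipschitz property of $H$ and the strong $L^2_tH^1_x$ convergence from Proposition \ref{prop:skor}) convergence giving weak $L^1_{t,x}$ convergence of the product, and finally Vitali's theorem based on the uniform $L^p$ bound with $2<p<2(1+\varepsilon)$ from \eqref{eq:est1} via Burkholder--Davis--Gundy. The only additions beyond the paper's write-up (the explicit treatment of the vanishing $\eta_n\Delta^{2K}$ term and the identification $\tilde M^n=\int_0^\tec\langle H(\nabla\tilde u^n),\varphi\rangle\,\dif\tilde W^n$) are consistent with it, so there is nothing to correct.
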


\begin{proof}[Proof of Theorem \ref{prop:martsol}]
Once the above lemmas established, we infer that
$$\bigg\langle\!\!\!\bigg\langle\tilde{M}-\int_0^\tec\big\langle H(\nabla\tilde{u})\,\dif\tilde{W},\varphi\big\rangle\bigg\rangle\!\!\!\bigg\rangle=0$$
and consequently \eqref{eq:weak} holds true.
Moreover, we note that the equation \eqref{eq:weak} is in fact satisfied in a stronger sense: since $\tilde u\in L^2(\tilde \Omega;L^2(0,T;H^2(\mt)))$ due to Propositions \ref{prop:est1} and \ref{prop:L2} and $H(\nabla\tilde u)\tilde v\in L^1(\tilde\Omega;L^1(0,T;L^1(\mt)))$ which follows from the proof of Lemma \ref{lem:mart} and therefore \eqref{eq:strong} follows.
%
\end{proof}

\section{Vanishing viscosity limit}\label{sec:van-vis}

The aim of this final section is to study the limit $\varepsilon\rightarrow0$ in \eqref{eq:approx2} and complete the proof of Theorem \ref{thm:main}.
Recall that it was proved in Section \ref{sec:viscous} that for every $\varepsilon\in (0,1)$ there exists
$$\big((\tilde\Omega^\varepsilon,\tilde\mf^\varepsilon,(\tilde\mf^\varepsilon_t),\tilde\prst^\varepsilon),\tilde u^\varepsilon,\tilde W^\varepsilon\big)$$
which is a martingale solution to \eqref{eq:approx2} with the initial law $\Lambda^\varepsilon$. We recall that $\Lambda^\varepsilon\overset{*}{\rightharpoonup}\Lambda$ in the sense of measures on $H^1(\mt)$. It was shown in \cite{jakubow} that it is enough to consider only one probability space, namely,
$$(\tilde\Omega^\varepsilon,\tilde\mf^\varepsilon,\tilde\prst^\varepsilon)=\big([0,1],\mathcal{B}([0,1]),\mathcal{L}\big)\qquad\forall \varepsilon\in (0,1)$$
where $\mathcal{L}$ denotes the Lebesgue measure on $[0,1]$.
Moreover, we can assume without loss of generality that there exists one common Wiener process $W$ for all $\varepsilon$. Indeed, one could perform the compactness argument of the previous section for all the parameters from any chosen subsequence $\varepsilon_n$ at once by redefining
$$\mathcal{X}=\Big(\prod_{n\in\mn}\mathcal{X}_u\Big)\times\mathcal{X}_W$$
and proving tightness of the joint laws of $(u^{\eta,\varepsilon_1},u^{\eta,\varepsilon_2},\dots,W)$ for $\eta\in(0,1)$.
In order to further simplify the notation we also omit the tildas and denote the martingale solution found in Section \ref{sec:viscous} by
$$\big((\Omega,\mf,(\mf_t),\prst),u^\varepsilon,W\big).$$

\subsection{Estimates}

We start with an estimate of the surface area and the mean curvature term. The proof of the following are bounds requires $N=2$. From now on we therefore restrict ourselves to $N=2$ and two-dimensional graphs in $\R^3$.

\begin{prop}\label{prop:area}
For any $\varepsilon>0$ we have the following uniform estimate
\begin{equation}\label{eq:area}
\begin{split}
\stred\sup_{0\leq t\leq T}\int_{\mttwo}H(\nabla u^\varepsilon)\,\dif x+\frac{1}{2}\,\stred\int_0^T\int_{\mttwo}&\bigg|\diver\bigg(\frac{\nabla u^\varepsilon}{H(\nabla u^\varepsilon)}\bigg)\bigg|^2 H(\nabla
u^\varepsilon)\,\dif x\,\dif t\\
&\leq C\,\stred\int_{\mttwo}H(\nabla u^\varepsilon(0))\,\dif x+K(\varepsilon),
\end{split}
\end{equation}
where $K(\varepsilon)\rightarrow 0$ as $\varepsilon\rightarrow0$.

\begin{proof}
We start with some calculations that hold for any $N\in\N$. In the first step, it is necessary to derive the equation satisfied
by $\nabla u^\varepsilon$ and then apply the It\^o formula to the function $p\mapsto\int_{\mt}H(p)\,\dif x$. In order to make the calculation rigorous we make use of the generalized It\^o formula as introduced in \cite[Appendix A]{dehovo}. To be more precise, we consider \eqref{eq:approx33} and obtain
\begin{equation}\label{eq:grad}
\begin{split}
\dif \nabla u^\varepsilon &= \varepsilon\nabla\Delta u^\varepsilon\,\dif t+\nabla\bigg[H\diver\bigg(\frac{\nabla u^\varepsilon}{H}\bigg)\bigg]\dif t\\
&\quad\quad+\frac{1}{2}\nabla\bigg[\frac{(\nabla u^\varepsilon)^*}{H}\totdif^2u^\varepsilon\frac{\nabla u^\varepsilon}{H}\bigg]\dif t+\nabla H\dif W.
\end{split}
\end{equation}
(For notational simplicity we do not stress the dependence of $H$ on $\nabla u^\varepsilon$.) Note that $\nabla u^\varepsilon\in L^2(\Omega;L^2(0,T;H^1(\mt)))$ according to Proposition \ref{prop:est1} hence \eqref{eq:grad} can be rewritten as
$$\dif \nabla u^\varepsilon =\nabla F(t)\dif t+G(t)\dif W$$
where $F,\,G\in L^2(\Omega;L^2(0,T;L^2(\mt)))$. Besides, $H\in C^2(\mr^N)$ has bounded derivatives so the only assumption of \cite[Proposition A.1]{dehovo} which is not satisfied is $\nabla u^\varepsilon\in L^2(\Omega;C([0,T];L^2(\mt)))$. However, following the proof of \cite[Proposition A.1]{dehovo}, one can easily see that under the boundedness hypothesis for $\totdif H$ everything works well even without it.

Therefore, we arrive at the following It\^o formula for the surface measure 
\begin{equation*}
\begin{split}
\dif\int_{\mt}H\,\dif x&=\int_{\mt}\frac{\nabla u}{H}\nabla\big[\varepsilon\Delta u^\varepsilon\big]\,\dif x\,\dif t+\int_{\mt}\frac{\nabla u^\varepsilon}{H}\nabla\Big[H\diver\Big(\frac{\nabla u^\varepsilon}{H}\Big)\Big]\,\dif x\,\dif t\\
&\quad+\frac{1}{2}\int_{\mt}\frac{\nabla u^\varepsilon}{H}\nabla\Big[\frac{(\nabla u^\varepsilon)^*}{H}\totdif^2u^\varepsilon\frac{\nabla u^\varepsilon}{H}\Big]\,\dif x\,\dif 
t+\int_{\mt}\frac{\nabla u^\varepsilon}{H}\nabla H\,\dif x\,\dif W\\
&\quad+\frac{1}{2}\int_{\mt}\frac{1}{H}\Big(\mathrm{Id}-\frac{\nabla 
u^\varepsilon}{H}\otimes\frac{\nabla u^\varepsilon}{H}\Big):\Big(\nabla H\otimes\nabla H\Big)\,\dif x\,\dif t\\
&=J_1+\cdots+J_5,
\end{split}
\end{equation*}
where, for the reader's convenience we recall that in the formulas above  
$\dif x$ stands  for the spatial integration on 
$\mt$ and $\dif t$ respectively $\dif W$ for the time differentials in It\^o sense. 
 It follows from the above consideration that the stochastic integral $J_4$ is a 
square integrable martingale so has zero expectation. After integration by parts 
$J_2$ has a negative sign. For $J_1$ we have
\begin{equation*}
\begin{split}
 J_1\leq\frac{1}{2}\int_{\mt}\Big|\diver\Big(\frac{\nabla u^\varepsilon}{H}\Big)\Big|^2\,H\,\dif x+\frac{\varepsilon^2}{2}\int_{\mt}|\Delta u^\varepsilon|^2\,\dif x.
\end{split}
\end{equation*}
The first term on the right hand side is controlled by $J_2$ whereas the integral over $\Omega\times[0,T]$ of the second one vanishes as $\varepsilon\rightarrow 0$ due to Proposition \ref{prop:est1}.
Next, we will show that $J_3+J_5=0$. Here it is convenient to define $w:=\frac{\nabla u^\varepsilon}{H}$ and to observe that
\begin{align*}
	\frac{1}{H}\Big(\mathrm{Id}-\frac{\nabla u^\varepsilon}{H}\otimes\frac{\nabla u^\varepsilon}{H}\Big):\Big(\nabla H\otimes\nabla H\Big)
	\,&=\, \Big(\frac{1}{H}\Big(\mathrm{Id}-\frac{\nabla u^\varepsilon}{H}\otimes\frac{\nabla u^\varepsilon}{H}\Big) D^2u^\eps w\Big)\cdot\nabla H\\
	&=\, (Dw\, w)\cdot\nabla H.
\end{align*}
Using this equality we deduce that
\begin{align*}
	J_3+J_5&=-\frac{1}{2}\int_{\mt}(\diver w)\big(w\cdot\nabla H\big)\,\dif x\,\dif t
	+\frac{1}{2}\int_{\mt}(Dw\, w)\cdot\nabla H\,\dif x\,\dif t\\
	&=-\frac{1}{2}\int_{\mt}\big((\trace Dw)\id -Dw\big)w\cdot \nabla H\,\dif x\,\dif t.
\end{align*}
We now restrict ourselves to $N=2$. Since $(\trace A)\id -A = (\cof A)^T$ for any $A\in \R^{2\times 2}$, since $\nabla\cdot (Aw)=(\nabla\cdot A^T)\cdot w + A^T:Dw$ and since cofactor matrices are divergence-free the last equality further yields
\begin{align*}
	J_3+J_5 
	\,&=\, -\frac{1}{2}\int_{\mt} \big[(\cof Dw)^Tw\big]\cdot \nabla H\,\dif x\,\dif t\\
	&=\, \frac{1}{2}\int_{\mt} (\cof Dw):Dw\,H\,\dif x\,\dif t.
\end{align*}
Now $(\cof Dw):Dw=2\det Dw$ and we arrive at
\begin{align*}
	 J_3+J_5 \,=\, \int_{\mt} \det Dw\,H\,\dif x\,\dif t.
\end{align*}
The integral on the right-hand side is just the integral over the Gaussian curvature of the graph. 
In fact, we have $K=\frac{1}{H^4}\det D^2u=\det Dw$, since
\begin{align*}
	Dw \,&=\, D\Big(\frac{\nabla u}{H}\Big) \,=\, \frac{1}{H}\Big(\id-\frac{\nabla u}{H}\otimes \frac{\nabla u}{H}\Big)D^2u,\\
	\det Dw \,&=\, \frac{1}{H^2}\cdot \det \Big(\id-\frac{\nabla u}{H}\otimes \frac{\nabla u}{H}\Big) \cdot \det D^2u \,=\, \frac{1}{H^2}\cdot 1\cdot \frac{1}{H^2}\det D^2u,
\end{align*}
where we have used that the matrix $\id-\frac{\nabla u}{H}\otimes \frac{\nabla u}{H}$ has 
eigenvalues $1$ and $\frac{1}{H^2}$.\\
Due to our choice of periodic boundary condition the graph has the topology of a torus and the Gauss--Bonnet Theorem yields that $J_3+J_5=0$. This can also be deduced more directly from the formula 
$$
	(\diver h)({w})\det(D w)=\diver\big(\cof(Dw)^T\cdot h(w)\big),
$$
which can be verified by direct computations. Observing that $H=(1-|w|^2)^{-\frac{1}{2}}$ and setting
\begin{align*}
	h(z)=\frac{1-\sqrt{1-|z|^2}}{2|z|^2}z
\end{align*}
we obtain $\diver h (z)= (1-|z|^2)^{-\frac{1}{2}}$ and therefore
$$
	J_3+J_5\,=\,\int_{\mttwo}\diver\big(\cof(Dw)^T\cdot h(w)\big)\,\dif x\,\dif t=0
$$
and consequently for every $t\in[0,T]$
\begin{equation*}
\begin{split}
\stred\int_{\mttwo}H\big(\nabla u^\varepsilon(t)\big)\,\dif x+\frac{1}{2}\stred\int_0^t\int_{\mttwo}&\bigg|\diver\bigg(\frac{\nabla u^\varepsilon}{H}\bigg)\bigg|^2 H\,\dif x\,\dif s\\
&\qquad\leq \stred\int_{\mttwo}H(\nabla u^\varepsilon(0))\,\dif x+K(\varepsilon).
\end{split}
\end{equation*}
In order to obtain \eqref{eq:area} we proceed similarly, the only difference is in the estimate for the stochastic integral:
\begin{align*}
\stred\sup_{0\leq t\leq T}&\bigg|\int_0^t\int_{\mttwo}\diver\bigg(\frac{\nabla u^\varepsilon}{H}\bigg)H\,\dif x\,\dif W\bigg|\\\
&\leq C\,\stred\bigg(\int_0^T\bigg|\int_{\mttwo}\diver\bigg(\frac{\nabla u^\varepsilon}{H}\bigg)H\,\dif x\bigg|^2\dif t\bigg)^{1/2}\\
&\leq C\,\stred\bigg[\bigg(\sup_{0\leq t\leq T}\int_{\mttwo}H\,\dif x\bigg)\bigg(\int_0^T\int_{\mttwo}\bigg|\diver\bigg(\frac{\nabla u^\varepsilon}{H}\bigg)\bigg|^2H\,\dif x\,\dif t\bigg)\bigg]^{1/2}\\
&\leq \frac{1}{2}\stred\sup_{0\leq t\leq T}\int_{\mttwo}H\,\dif x+ C\,\stred\int_0^T\int_{\mttwo}\bigg|\diver\bigg(\frac{\nabla u^\varepsilon}{H}\bigg)\bigg|^2H\,\dif x\,\dif t\\
&\leq \frac{1}{2}\stred\sup_{0\leq t\leq T}\int_{\mttwo}H\,\dif x+C\,\stred\int_{\mttwo}H(\nabla u^\varepsilon(0))\,\dif x+K(\varepsilon)
\end{align*}
which completes the proof.
\end{proof}
\end{prop}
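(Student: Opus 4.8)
The plan is to apply the generalized It\^o formula of \cite[Proposition A.1]{dehovo} to the functional $p\mapsto\int_{\mt}H(p)\,\dif x$ evaluated along $p=\nabla u^\varepsilon$. First I would derive the SPDE satisfied by $\nabla u^\varepsilon$ by differentiating \eqref{eq:approx33} in space; since $\nabla u^\varepsilon\in L^2(\Omega;L^2(0,T;H^1(\mt)))$ by Proposition \ref{prop:est1}, this takes the abstract form $\dif\nabla u^\varepsilon=\nabla F\,\dif t+G\,\dif W$ with $F,G\in L^2(\Omega;L^2(0,T;L^2(\mt)))$, so the only hypothesis of \cite[Proposition A.1]{dehovo} that needs checking is the time-continuity of $\nabla u^\varepsilon$, which can be circumvented because $\totdif H$ is bounded. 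Applying the It\^o formula produces five terms: a viscous term $J_1$, the genuine mean-curvature term $J_2$, the It\^o--Stratonovich correction term $J_3$, the martingale term $J_4$ (zero expectation), and the It\^o-correction quadratic term $J_5$. After integration by parts, $J_2$ contributes exactly $-\tfrac12\int\big|\diver(\nabla u^\varepsilon/H)\big|^2H$, the term we want on the left; $J_1$ is absorbed by Young's inequality into half of $|J_2|$ plus $\tfrac{\varepsilon^2}{2}\int|\Delta u^\varepsilon|^2$, whose space-time-$\Omega$ integral is $K(\varepsilon)\to0$ by Proposition \ref{prop:est1}.

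The crux is showing $J_3+J_5=0$, and this is where I expect the main obstacle and where dimension $N=2$ enters. Setting $w:=\nabla u^\varepsilon/H$, I would first observe the pointwise identity $\tfrac1H(\id-w\otimes w):(\nabla H\otimes\nabla H)=(Dw\,w)\cdot\nabla H$, which collapses $J_3+J_5$ to $-\tfrac12\int\big((\trace Dw)\id-Dw\big)w\cdot\nabla H$. In two dimensions $(\trace A)\id-A=(\cof A)^T$, so using that cofactor matrices are divergence-free and the product rule $\diver(Aw)=(\diver A^T)\cdot w+A^T:Dw$, an integration by parts turns this into $\tfrac12\int(\cof Dw):Dw\,H=\int\det Dw\,H$, which is precisely $\int K\,H$, the integrated Gaussian curvature of the graph. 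By the periodic boundary condition the graph is topologically a torus, so Gauss--Bonnet forces this to vanish; alternatively one exhibits an explicit vector field $h(z)=\tfrac{1-\sqrt{1-|z|^2}}{2|z|^2}z$ with $\diver h(z)=(1-|z|^2)^{-1/2}=H$ and writes $J_3+J_5=\int\diver\big(\cof(Dw)^T h(w)\big)=0$ directly. The algebraic identity $(\diver h)(w)\det(Dw)=\diver\big(\cof(Dw)^T\cdot h(w)\big)$ underlying this can be verified by direct computation.

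Once the pointwise/integrated identity is in place, taking expectations yields the plain estimate with $\sup_{t}$ replaced by pointwise-in-$t$. To upgrade to the $\stred\sup_{0\le t\le T}$ bound claimed in \eqref{eq:area}, I would redo the computation keeping the supremum inside and handle the martingale term $J_4$ via the Burkholder--Davis--Gundy inequality: the quadratic variation of $\int_0^t\int_{\mttwo}\diver(\nabla u^\varepsilon/H)H\,\dif x\,\dif W$ is bounded by $\big(\sup_t\int H\big)\cdot\int_0^T\int\big|\diver(\nabla u^\varepsilon/H)\big|^2H$, so after BDG and Young's inequality the $\sup_t\int H$ part is absorbed into $\tfrac12\stred\sup_t\int H$ on the left and the remaining curvature integral is bounded by the right-hand side already obtained. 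This closes the estimate with the constant $C$ and $K(\varepsilon)\to0$ as stated.
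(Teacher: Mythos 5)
Your proposal is correct and follows essentially the same route as the paper's own proof: the generalized It\^o formula applied to $p\mapsto\int_{\mttwo}H(p)\,\dif x$ along $\nabla u^\varepsilon$, the decomposition into $J_1,\dots,J_5$ with $J_1$ absorbed via Young's inequality and the $\varepsilon^2$ remainder controlled by Proposition \ref{prop:est1}, the two-dimensional cofactor identity and Gauss--Bonnet (or the explicit vector field $h$) giving $J_3+J_5=0$, and the Burkholder--Davis--Gundy plus Young argument to upgrade to the $\stred\sup_{0\leq t\leq T}$ bound. No substantive differences to report.
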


As a consequence we deduce an estimate for the $L^2$-norm of the solution.

\begin{cor}\label{cor:L2}
For any $\varepsilon>0$ we have the following uniform estimate
\begin{equation*}\label{eq:L2}
\stred\|u^\varepsilon\|_{L^2(0,T;L^2)}\leq C.
\end{equation*}

\begin{proof}
In the first step we show an estimate for the mean value of $u^\varepsilon$ over $\mttwo$ and
then we apply the Poincar\'e inequality. Testing \eqref{eq:approx2} by $\varphi\equiv 1$ we obtain
\begin{equation*}
 \begin{split}
 \dif\int_{\mttwo }u^\varepsilon\,\dif x=\frac{1}{2}\int_{\mttwo}H(\nabla u^\varepsilon)\diver\bigg
(\frac{\nabla u^\varepsilon}{H(\nabla u^\varepsilon)}\bigg)\,\dif x\,\dif t+\int_{\mttwo}H(\nabla u^\varepsilon)\,\dif x\,\dif W.
 \end{split}
\end{equation*}
Since the above stochastic integral is a square-integrable martingale, we apply the Burkholder-Davis-Gundy inequality, Proposition \ref{prop:est1} and Proposition \ref{prop:area} and deduce that
\begin{equation*}
 \begin{split}
 \stred\sup_{0\leq t\leq T}\bigg|\int_{\mttwo} u^\varepsilon(t)\,\dif x\bigg|\leq\stred\bigg|\int_{\mttwo} u^\varepsilon(0)\,\dif x\bigg|+C.
 \end{split}
\end{equation*}
The Poincar\'e inequality yields
\begin{equation*}
\| u^\varepsilon(t)\|_{L^2(0,T;L^2)}\leq C\|\nabla u^\varepsilon(t)\|_{L^2(0,T;L^2)}+\sup_{0\leq t\leq T}\bigg|\int_{\mttwo} u^\varepsilon(t)\,\dif x\bigg|
\end{equation*}
and the claim follows.
\end{proof}
\end{cor}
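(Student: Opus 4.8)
The plan is to control the spatial mean of $u^\varepsilon$ first and then recover the full $L^2$-norm via the Poincar\'e inequality together with the gradient bound of Proposition \ref{prop:est1}. Writing $m^\varepsilon(t)=\int_{\mttwo}u^\varepsilon(t)\,\dif x$ and testing the weak form \eqref{eq:approx2} with $\varphi\equiv 1$, the Laplace term drops out by periodicity and one is left with
$$\dif m^\varepsilon=\frac12\Big(\int_{\mttwo}H(\nabla u^\varepsilon)\diver\Big(\tfrac{\nabla u^\varepsilon}{H(\nabla u^\varepsilon)}\Big)\dif x\Big)\dif t+\Big(\int_{\mttwo}H(\nabla u^\varepsilon)\,\dif x\Big)\dif W .$$
It then suffices to bound the two right-hand terms by quantities already estimated uniformly in $\varepsilon$.

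For the drift I would exploit the positivity of $H(\nabla u^\varepsilon)$ and split $H=\sqrt H\cdot\sqrt H$, so that by Cauchy--Schwarz
$$\Big|\int_{\mttwo}H\diver\Big(\tfrac{\nabla u^\varepsilon}{H}\Big)\dif x\Big|\le\Big(\int_{\mttwo}H\,\dif x\Big)^{1/2}\Big(\int_{\mttwo}\Big|\diver\Big(\tfrac{\nabla u^\varepsilon}{H}\Big)\Big|^2H\,\dif x\Big)^{1/2}.$$
Integrating over $[0,t]$, taking the first factor out as $\big(\sup_{[0,T]}\int_{\mttwo}H\,\dif x\big)^{1/2}$, and applying Cauchy--Schwarz in time and then in $\omega$, the expectation of $\int_0^T|\cdot|\,\dif s$ is controlled by $CT^{1/2}\big(\stred\sup_{[0,T]}\int_{\mttwo}H\,\dif x\big)^{1/2}\big(\stred\int_0^T\!\int_{\mttwo}|\diver(\nabla u^\varepsilon/H)|^2H\,\dif x\,\dif t\big)^{1/2}$, which Proposition \ref{prop:area} bounds uniformly in $\varepsilon$. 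The stochastic term $\int_0^\tec(\int_{\mttwo}H\,\dif x)\,\dif W$ is a square-integrable martingale, so Burkholder--Davis--Gundy gives $\stred\sup_{[0,T]}\big|\int_0^t(\int_{\mttwo}H\,\dif x)\dif W\big|\le C\,\stred\big(\int_0^T(\int_{\mttwo}H\,\dif x)^2\dif t\big)^{1/2}\le C\,T^{1/2}\stred\sup_{[0,T]}\int_{\mttwo}H\,\dif x\le C$, again by Proposition \ref{prop:area}.

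Combining the two bounds with $\stred|m^\varepsilon(0)|\le C$ (uniform in $\varepsilon$, from the construction of $u_0^\varepsilon$), we obtain $\stred\sup_{[0,T]}|m^\varepsilon(t)|\le C$. Since $|\mttwo|=1$, the torus Poincar\'e inequality gives $\|u^\varepsilon(t)\|_{L^2}\le C\|\nabla u^\varepsilon(t)\|_{L^2}+|m^\varepsilon(t)|$; squaring, integrating over $(0,T)$, taking expectation, and using Proposition \ref{prop:est1} with the $\varepsilon$-independent exponent $p=2$ yields $\stred\|u^\varepsilon\|_{L^2(0,T;L^2)}^2\le C$, and the stated bound follows by Jensen's inequality.

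The only step that is not entirely routine is the drift estimate: $\int_{\mttwo}H\diver(\nabla u^\varepsilon/H)\,\dif x$ is \emph{not} controlled by the $L^2$-gradient bound of Proposition \ref{prop:est1} alone, so one genuinely needs the weighted surface-area and mean-curvature control of Proposition \ref{prop:area} (whose proof forces $N=2$), together with the weighted Cauchy--Schwarz splitting above; everything else is standard.
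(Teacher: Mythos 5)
Your overall route is exactly the paper's: test \eqref{eq:approx2} with $\varphi\equiv 1$ to get an SDE for the mean value $m^\varepsilon(t)=\int_{\mttwo}u^\varepsilon(t)\,\dif x$, bound the drift by the weighted Cauchy--Schwarz inequality $\big|\int_{\mttwo}H\diver\big(\tfrac{\nabla u^\varepsilon}{H}\big)\dif x\big|\le\big(\int_{\mttwo}H\,\dif x\big)^{1/2}\big(\int_{\mttwo}\big|\diver\big(\tfrac{\nabla u^\varepsilon}{H}\big)\big|^2H\,\dif x\big)^{1/2}$ together with Proposition \ref{prop:area}, bound the stochastic integral by Burkholder--Davis--Gundy, and conclude with the Poincar\'e inequality and Proposition \ref{prop:est1}. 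These estimates are correct and in fact make explicit details that the paper's proof only sketches (your tacit uniform bound on $\stred\big|\int_{\mttwo}u^\varepsilon(0)\,\dif x\big|$ corresponds to what the paper itself leaves implicit).

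The final step, however, claims more than your estimates deliver. Squaring the Poincar\'e inequality and asserting $\stred\|u^\varepsilon\|_{L^2(0,T;L^2)}^2\le C$ requires control of $\stred\sup_{0\le t\le T}|m^\varepsilon(t)|^2$ (or at least of $\stred\int_0^T|m^\varepsilon(t)|^2\,\dif t$), while you established only the first moment $\stred\sup_{0\le t\le T}|m^\varepsilon(t)|\le C$: BDG was applied at the $L^1(\Omega)$ level, and squaring the drift contribution leads, after the weighted Cauchy--Schwarz step, to $\stred\big[\sup_{0\le t\le T}\int_{\mttwo}H\,\dif x\cdot\int_0^T\!\int_{\mttwo}\big|\diver\big(\tfrac{\nabla u^\varepsilon}{H}\big)\big|^2H\,\dif x\,\dif t\big]$, a product of two quantities that Proposition \ref{prop:area} controls only separately and only in $L^1(\Omega)$; no second-moment bound on the area or curvature is available, so this product cannot be estimated by the stated ingredients. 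The slip is harmless because the corollary asserts only $\stred\|u^\varepsilon\|_{L^2(0,T;L^2)}\le C$: do not square, but take the expectation of $\|u^\varepsilon\|_{L^2(0,T;L^2)}\le C\|\nabla u^\varepsilon\|_{L^2(0,T;L^2)}+CT^{1/2}\sup_{0\le t\le T}|m^\varepsilon(t)|$, bound $\stred\|\nabla u^\varepsilon\|_{L^2(0,T;L^2)}\le\big(\stred\|\nabla u^\varepsilon\|_{L^2(0,T;L^2)}^2\big)^{1/2}\le C$ by Proposition \ref{prop:est1} with $p=2$, and use your bound on $\stred\sup_{0\le t\le T}|m^\varepsilon(t)|$; this is precisely how the paper concludes.
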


Finally, we proceed with a uniform estimate for the time derivative of $u^\varepsilon$.

\begin{prop}\label{prop:fractreg}
There exists $s,k>0$ and $p\in[1,\infty)$ such that
\begin{equation*}
\stred\|u^\varepsilon\|_{W^{s,2}(0,T;W^{-k,p})}\leq C.
\end{equation*}

\begin{proof}
In order to estimate the stochastic term, we make use of \cite[Lemma 2.1]{fland} which gives bounds for fractional time derivatives of a stochastic integrals. We obtain for $s\in[0,1/2)$ that
$$\stred\bigg\|\int_0^\tec H(\nabla u^\varepsilon)\,\dif W\bigg\|_{W^{s,2}(0,T;L^2)}^2\leq C\,\stred\int_0^T\|H(\nabla u^\varepsilon)\|_{L^2}^2\dif t\leq C.$$

Since $(u^\varepsilon)$ is bounded in $L^1(\Omega;L^1(0,T;W^{1,2}(\mttwo)))$ we deduce that $(\Delta u^\varepsilon)$ is bounded in $L^1(\Omega;L^1(0,T;W^{-1,2}(\mttwo)))$ and as a consequence
$$\bigg(\Big(\frac{1}{2}+\varepsilon\Big)\int_0^\cdot\Delta u^\varepsilon\,\dif s\bigg)\quad\text{is bounded in}\quad L^1(\Omega;W^{1,1}(0,T;W^{-1,2}(\mttwo))).$$
Regarding the remaining term, we deduce from \eqref{eq:area} that
$$\bigg(H(\nabla u^\varepsilon)\diver\Big(\frac{\nabla u^\varepsilon}{H(\nabla u^\varepsilon)}\Big)\bigg)\quad\text{is bounded in}\quad L^1(\Omega;L^1(0,T;L^1(\mttwo)))$$
hence
$$\bigg(\int_0^\tec H(\nabla u^\varepsilon)\diver\Big(\frac{\nabla u^\varepsilon}{H(\nabla u^\varepsilon)}\Big)\dif s\bigg)\quad\text{is bounded in}\quad L^1(\Omega;W^{1,1}(0,T;L^1(\mttwo))).$$
Altogether, we obtain that $(u^\varepsilon)$ is bounded in $L^1(\Omega;W^{s,2}(0,T;W^{-k,p}))$ where $k,p$ are determined by the Sobolev embedding theorem so that
$$L^1(\mttwo)\hookrightarrow W^{-k,p}(\mttwo),\qquad W^{-1,2}(\mttwo)\hookrightarrow W^{-k,p}(\mttwo)$$
and the proof is complete.
\end{proof}
\end{prop}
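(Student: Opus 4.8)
The plan is to split $u^\varepsilon$ according to the weak form of \eqref{eq:approx2} into four pieces: the initial datum $u^\varepsilon(0)$ (constant in time), the parabolic drift $\big(\tfrac12+\varepsilon\big)\int_0^\cdot\Delta u^\varepsilon\,\dif s$, the mean curvature drift $\tfrac12\int_0^\cdot H(\nabla u^\varepsilon)\,v^\varepsilon\,\dif s$ with $v^\varepsilon=\diver\big(\nabla u^\varepsilon/H(\nabla u^\varepsilon)\big)$, and the stochastic term $\int_0^\cdot H(\nabla u^\varepsilon)\,\dif W$; then to estimate the time regularity of each piece separately in a suitable negative-order spatial space; and finally to choose $(k,p)$ so that the four target spaces all embed into a single common $W^{-k,p}(\mttwo)$ while the four time exponents all dominate one $s>0$.

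For the two drift terms I would use only a first-order-in-time Bochner bound. Proposition \ref{prop:est1} (with $p=2$) together with Corollary \ref{cor:L2} gives that $u^\varepsilon$ is bounded in $L^1(\Omega;L^1(0,T;H^1(\mttwo)))$ uniformly in $\varepsilon$; hence $\Delta u^\varepsilon$ is bounded in $L^1(\Omega;L^1(0,T;H^{-1}(\mttwo)))$ and $\big(\tfrac12+\varepsilon\big)\int_0^\cdot\Delta u^\varepsilon\,\dif s$ is bounded in $L^1(\Omega;W^{1,1}(0,T;H^{-1}(\mttwo)))$. For the mean curvature drift I would write $|Hv^\varepsilon|=\sqrt H\,\big(\sqrt H\,|v^\varepsilon|\big)$, apply Cauchy--Schwarz in $x$ to obtain $\int_{\mttwo}|Hv^\varepsilon|\,\dif x\le\big(\int_{\mttwo}H\,\dif x\big)^{1/2}\big(\int_{\mttwo}|v^\varepsilon|^2H\,\dif x\big)^{1/2}$, then integrate in $t$, use Cauchy--Schwarz once more and Young's inequality; the area and mean curvature estimate \eqref{eq:area} then shows that $H(\nabla u^\varepsilon)v^\varepsilon$ is bounded in $L^1(\Omega;L^1(0,T;L^1(\mttwo)))$, so $\tfrac12\int_0^\cdot H(\nabla u^\varepsilon)v^\varepsilon\,\dif s$ is bounded in $L^1(\Omega;W^{1,1}(0,T;L^1(\mttwo)))$.

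For the stochastic convolution I would invoke the fractional-in-time bound for stochastic integrals of \cite[Lemma 2.1]{fland}: since $\|H(\nabla u^\varepsilon)\|_{L^2_x}^2=|\mttwo|+\|\nabla u^\varepsilon\|_{L^2_x}^2$ and $\stred\int_0^T\|\nabla u^\varepsilon\|_{L^2_x}^2\,\dif t\le C$ by Proposition \ref{prop:est1}, one gets $\stred\|\int_0^\cdot H(\nabla u^\varepsilon)\,\dif W\|_{W^{s,2}(0,T;L^2(\mttwo))}^2\le C\,\stred\int_0^T\|H(\nabla u^\varepsilon)\|_{L^2_x}^2\,\dif t\le C$ for every $s\in[0,1/2)$. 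It then remains to patch the four bounds together: $L^2(\Omega;\cdot)\hookrightarrow L^1(\Omega;\cdot)$ on the finite measure space, $W^{1,1}(0,T)\hookrightarrow W^{s,2}(0,T)$ for $s<1/2$ by the one-dimensional fractional Sobolev embedding, and $L^2(\mttwo)$, $H^{-1}(\mttwo)$, $L^1(\mttwo)$ all embed into $W^{-k,p}(\mttwo)$ once $k,p$ are fixed through the Sobolev embedding $W^{k,p'}(\mttwo)\hookrightarrow L^\infty(\mttwo)$ (for instance $k>1$, $p=2$ in dimension two). Assembling these yields that $u^\varepsilon$ is bounded in $L^1(\Omega;W^{s,2}(0,T;W^{-k,p}(\mttwo)))$ uniformly in $\varepsilon$, which is the claim.

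The main, and essentially only, obstacle is producing the $L^1(\Omega\times(0,T);L^1_x)$ bound for $H(\nabla u^\varepsilon)v^\varepsilon$: no pointwise control of this term is available, only the weighted bound $\stred\int_0^T\int_{\mttwo}|v^\varepsilon|^2H\,\dif x\,\dif t\le C$ of \eqref{eq:area} together with $\stred\sup_{t}\int_{\mttwo}H\,\dif x\le C$, so one is forced to exploit the factorization $H=\sqrt H\cdot\sqrt H$ and two nested applications of Cauchy--Schwarz (in $x$, then in $(\omega,t)$) followed by Young. It is precisely this low integrability, shared with the gradient bounds, that prevents any estimate better than a fractional time derivative valued in a negative-order space; everything else in the proof is routine bookkeeping with Sobolev embeddings.
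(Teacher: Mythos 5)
Your proposal is correct and follows essentially the same route as the paper: decompose $u^\varepsilon$ via the weak form of \eqref{eq:approx2}, bound the two drift contributions in $W^{1,1}$ in time with values in $W^{-1,2}(\mttwo)$ respectively $L^1(\mttwo)$ (the latter from \eqref{eq:area}), treat the stochastic integral with \cite[Lemma 2.1]{fland} in $W^{s,2}(0,T;L^2)$ for $s<1/2$, and merge everything into a common $W^{-k,p}(\mttwo)$ by Sobolev embedding. You are in fact more explicit than the paper on the Cauchy--Schwarz/Young step yielding the $L^1_{\omega,t,x}$ bound for $H(\nabla u^\varepsilon)v^\varepsilon$ and on the embedding $W^{1,1}(0,T)\hookrightarrow W^{s,2}(0,T)$, which the paper leaves implicit.
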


\subsection{Compactness}

Let us define the path space $$\mathcal{X}=\mathcal{X}_u\times\mathcal{X}_\kappave\times\mathcal{X}_\nu\times\mathcal{X}_V\times\mathcal{X}_I\times\mathcal{X}_{u_0}\times\mathcal{X}_W,$$ where
$$\mathcal{X}_u=L^2\big(0,T;L^2(\mttwo)\big),\qquad\mathcal{X}_\kappave=\big(L^2(0,T;L^2(\mttwo)),w\big),$$
$$\mathcal{X}_\nu=\big(L^2(0,T;L^2(\mttwo)),w\big),\qquad\mathcal{X}_V=\big(L^2(0,T;L^2(\mttwo)),w\big),$$
$$\mathcal{X}_I=C\big([0,T];\mr\big),\qquad\mathcal{X}_W=C\big([0,T];\mr\big),\qquad\mathcal{X}_{u_0}=H^1(\mttwo).$$
Let us denote by $\mu_{u^\varepsilon}$ the law of $u^\varepsilon$ on $\mathcal{X}_u$, $\eta\in(0,1)$, by $\mu_{\kappave}$, $\mu_{\nu^\varepsilon}$, $\mu_{V^\varepsilon}$ and $\mu_{I^\varepsilon}$, respectively, the law of
$$\kappave:=\diver\bigg(\frac{\nabla u^\varepsilon}{H(\nabla u^\varepsilon)}\bigg),\qquad\nu^\varepsilon:=\frac{\nabla u^\varepsilon}{H(\nabla u^\varepsilon)},$$
$$V^\varepsilon:=\diver\bigg(\frac{\nabla u^\varepsilon}{H(\nabla u^\varepsilon)}\bigg)\sqrt{H(\nabla u^\varepsilon)},\qquad I^\varepsilon:=\int_0^\tec\|H(\nabla u^\varepsilon)\|_{L^1_x}^{1+\theta}\,\dif s$$
on $\mathcal{X}_\kappave$, $\mathcal{X}_\nu$, $\mathcal{X}_V$ and $\mathcal{X}_I$ (for some fixed $\theta\in(0,1)$), respectively, and by $\mu_W$ the law of $W$ on $\mathcal{X}_W$. Recall that the law of $u^\varepsilon(0)$ on $\mathcal{X}_{u_0}$ is given by $\Lambda^\varepsilon$ and due to the construction at the beginning of Section \ref{sec:regul} we immediately obtain tightness of $(\Lambda^\varepsilon)$. The joint law on $\mathcal{X}$ is then denoted by $\mu^\varepsilon$.
In order to prove tightness of $(\mu_{\varepsilon})$, we make use of 
the following compact embedding which can be found in \cite[Corollary 5]{simon}.

\begin{lemma}
Let $X,\,B,\,Y$ be Banach spaces such that
$X\overset{c}{\hookrightarrow}B\hookrightarrow Y.$ If $p,r\in[1,\infty)$ then
$$L^p(0,T;X)\cap W^{s,r}(0,T;Y)\overset{c}{\hookrightarrow} L^p(0,T;B)$$
provided $s>0$ if $r\geq p$ and $s>1/r-1/p$ if $r\leq p.$
\end{lemma}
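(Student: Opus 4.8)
The plan is to derive this as an Aubin--Lions--Simon-type compactness statement, by combining Simon's abstract characterisation of precompact subsets of $L^p(0,T;B)$ (from the same reference \cite{simon}) with Ehrling's lemma and a time-translation estimate coming from the fractional regularity in time. First I would reduce to sequences: it suffices to show that every sequence $(u_n)$ bounded in $L^p(0,T;X)\cap W^{s,r}(0,T;Y)$, say by $M$, has a subsequence converging in $L^p(0,T;B)$. By Simon's criterion this amounts to checking two conditions: that the time-averages $\int_{t_1}^{t_2}u_n(t)\,\dif t$ lie in a fixed compact subset of $B$ for every $0<t_1<t_2<T$, and that the translations are equicontinuous in time, i.e. $\sup_n\|u_n(\cdot+h)-u_n\|_{L^p(0,T-h;B)}\to 0$ as $h\to0^+$.

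The first condition is routine: by H\"older's inequality in time the averages satisfy $\|\int_{t_1}^{t_2}u_n\,\dif t\|_X\le (t_2-t_1)^{1-1/p}M$, so they are bounded in $X$ and hence relatively compact in $B$ by the compact embedding $X\overset{c}{\hookrightarrow}B$. For the second condition I would use Ehrling's lemma --- which, since $X\overset{c}{\hookrightarrow}B\hookrightarrow Y$, gives for each $\delta>0$ a constant $C_\delta$ with $\|v\|_B\le \delta\|v\|_X+C_\delta\|v\|_Y$ --- applied pointwise in time to $v=u_n(\cdot+h)-u_n$ and then integrated over $(0,T-h)$. This bounds $\|u_n(\cdot+h)-u_n\|_{L^p(0,T-h;B)}$ by $2\delta M$ plus $C_\delta\|u_n(\cdot+h)-u_n\|_{L^p(0,T-h;Y)}$; the first term is made small by the choice of $\delta$, and the point is to show the second term tends to $0$ with $h$, uniformly in $n$.

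To control the translation in $Y$ I would invoke the $W^{s,r}(0,T;Y)$ bound. From the Gagliardo/Slobodeckij seminorm one has the standard estimate $\|u_n(\cdot+h)-u_n\|_{L^r(0,T-h;Y)}\le C\,h^{\min(s,1)}M$ (the case $s\ge1$ reducing easily to $s\in(0,1)$). If $r\ge p$ this already suffices, since on a bounded interval $L^r\hookrightarrow L^p$, and only $s>0$ is needed. If $r<p$ one cannot simply interpolate up to the endpoint exponent $p$; instead I would first use the one-dimensional Sobolev embedding for $Y$-valued functions, $W^{s,r}(0,T;Y)\hookrightarrow W^{s',p}(0,T;Y)$ for any $0<s'<s-(1/r-1/p)$ --- a nonempty range of $s'$ precisely because $s>1/r-1/p$ --- and then apply the translation estimate with exponent $p$ and smoothness $s'$, getting $\|u_n(\cdot+h)-u_n\|_{L^p(0,T-h;Y)}\le C\,h^{s'}M$. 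In either case, given $\vartheta>0$ one fixes $\delta$ with $2\delta M<\vartheta/2$ and then $h_0$ so that the remaining term is $<\vartheta/2$ for $h<h_0$, which establishes the equicontinuity of translations and hence, via Simon's criterion, the compact embedding.

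I expect the genuinely delicate step to be the second condition in the regime $r<p$: the naive combination of Ehrling's lemma with the $L^r$-in-time translation bound is not enough, and one really needs the sharper fractional Sobolev embedding in the time variable, which is exactly where the hypothesis $s>1/r-1/p$ enters (while in the complementary regime $r\ge p$ the weaker requirement $s>0$ indeed suffices). A secondary technical point worth spelling out is the justification of the translation estimate for the Slobodeckij seminorm and its vector-valued version, which is classical but relies on the subadditivity of $h\mapsto\|u(\cdot+h)-u\|_{L^r}$ together with finiteness of the double integral; I would either cite this or include a short argument.
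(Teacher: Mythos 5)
The paper itself does not prove this lemma: it is quoted directly from Simon's compactness paper and used via the citation \cite[Corollary 5]{simon}, and your argument is essentially the proof from that reference — Simon's criterion for relative compactness in $L^p(0,T;B)$ (compactness of time averages in $B$ plus equicontinuity of translations), Ehrling's lemma for $X\overset{c}{\hookrightarrow}B\hookrightarrow Y$, the translation estimate $\|u(\cdot+h)-u\|_{L^r(0,T-h;Y)}\lesssim h^{s}\|u\|_{W^{s,r}(0,T;Y)}$, and, when $r<p$, the one-dimensional fractional Sobolev embedding $W^{s,r}(0,T;Y)\hookrightarrow W^{s',p}(0,T;Y)$ for $s'<s-(1/r-1/p)$, which is precisely where the hypothesis $s>1/r-1/p$ is used. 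Your reasoning is correct, and your closing remarks correctly identify the two points that need care (the sharper embedding in the regime $r<p$, and the justification of the vector-valued Slobodeckij translation estimate, both of which are available in \cite{simon}).
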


\begin{prop}
The set of laws $\{\mu^{\varepsilon};\,\varepsilon\in(0,1)\}$ is tight on $\mathcal{X}$.
\begin{proof}
We will show tightness of all the corresponding marginal laws, tightness for the joint laws then follows immediately. Concerning $\{\mu_{u^\varepsilon};\,\varepsilon\in(0,1)\}$, we want to employ the compact embedding
$$L^2(0,T;H^1(\mttwo))\cap W^{s,2}(0,T;W^{-k,p}(\mttwo))\overset{c}{\hookrightarrow} L^2(0,
T;L^2(\mttwo))$$
hence for $R>0$ we define the set
\begin{equation*}
\begin{split}
B_R=&\big\{u\in L^2(0,T;H^1(\mttwo))\cap W^{s,2}(0,T;W^{-k,p}(\mttwo));\\
&\hspace{1cm} \|u\|_{L^2(0,T;H^1(\mttwo))}+\|u\|_{W^{s,2}(0,T;W^{-k,p}(\mttwo))}\leq R\big\}.
\end{split}
\end{equation*}
Now, it holds by Chebyshev inequality, Proposition \ref{prop:est1}, Corollary \ref{cor:L2} and Proposition \ref{prop:fractreg}
\begin{equation*}
\begin{split}
\mu_{u^\varepsilon}(B^c_R)&\leq \prst\bigg(\|u^\varepsilon\|_{L^2(0,T;H^1)}>\frac{R}{2}\bigg)+\prst\bigg(\|u^\varepsilon\|_{W^{s,2}(0,T;W^{-k,p})}>\frac{R}{2}\bigg)\\
&\leq \frac{2}{R}\,\stred\|u^\varepsilon\|_{L^2(0,T;H^1)}+\frac{2}{R}\,\stred\|u^\varepsilon\|_{W^{s,2}(0,T;W^{-k,p})}\leq \frac{C}{R}
\end{split}
\end{equation*}
which yields tightness of $\{\mu_{u^\varepsilon};\,\varepsilon\in(0,1)\}$.

For $\{\mu_{\kappave};\,\varepsilon\in(0,1)\}$, $\{\mu_{\nu^\varepsilon};\,\varepsilon\in(0,1)\}$ and $\{\mu_{V^\varepsilon};\,\varepsilon\in(0,1)\}$) we proceed similarly and make use of the uniform estimate from Proposition \ref{prop:area} together with the fact that
for $R>0$ the set
\begin{equation*}
\begin{split}
B_R=&\big\{z\in L^2(0,T;L^2(\mttwo));\,\|z\|_{L^2(0,T;L^2)}\leq R\big\}.
\end{split}
\end{equation*}
is relatively compact in $\big(L^2(0,T;L^2(\mttwo)),w\big)$.

Regarding $\{\mu_{I^\varepsilon};\,\varepsilon\in(0,1)\}$ we observe that due to Proposition \ref{prop:est1}
$$\big(\|H(\nabla u^\varepsilon)\|_{L^1_x}^{1+\theta}\big)\quad\text{is bounded in}\quad L^{2/{1+\theta}}(\Omega;L^{2/{1+\theta}}(0,T))$$
hence
$$(I^\varepsilon)\quad\text{is bounded in}\quad L^{2/{1+\theta}}(\Omega;W^{1,2/{1+\theta}}(0,T))$$
and due to Sobolev imbedding theorem
$$W^{1,2/{1+\theta}}(0,T)\overset{c}{\hookrightarrow} C([0,T];\mr).$$
Therefore, we obtain tightness of $\{\mu_{I^\varepsilon};\,\varepsilon\in(0,1)\}$ on $\mathcal{X}_I$ and the corresponding tightness of $\mu_W$ follows by the same reasoning as in Proposition \ref{tight}.
\end{proof}
\end{prop}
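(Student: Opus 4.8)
The plan is to prove tightness of each of the seven marginal laws separately; tightness of the joint law $\mu^\varepsilon$ then follows at once, since a finite product of compact sets is compact and, given $\vartheta>0$, one chooses in each factor a compact set whose complement has $\mu^\varepsilon$-mass less than $\vartheta/7$, uniformly in $\varepsilon$. In every case the relevant compact set is a sublevel set of a norm for which a uniform-in-$\varepsilon$ bound in $L^q(\Omega;\cdot)$ with some $q\ge 1$ is available, so that Chebyshev's inequality produces the required smallness.

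For $\mathcal{X}_u=L^2(0,T;L^2(\mttwo))$ with its strong topology I would invoke the Aubin-Lions-Simon embedding recalled just above the proposition, namely $L^2(0,T;H^1(\mttwo))\cap W^{s,2}(0,T;W^{-k,p}(\mttwo))\overset{c}{\hookrightarrow}L^2(0,T;L^2(\mttwo))$. By Proposition \ref{prop:est1} and Corollary \ref{cor:L2} the family $(u^\varepsilon)$ is bounded in $L^1\big(\Omega;L^2(0,T;H^1(\mttwo))\big)$, and by Proposition \ref{prop:fractreg} it is bounded in $L^1\big(\Omega;W^{s,2}(0,T;W^{-k,p}(\mttwo))\big)$; hence the sets $B_R=\{\|u\|_{L^2(0,T;H^1)}+\|u\|_{W^{s,2}(0,T;W^{-k,p})}\le R\}$ are relatively compact in $\mathcal{X}_u$ and Chebyshev gives $\mu_{u^\varepsilon}(B_R^c)\le C/R$. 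For the three factors $\mathcal{X}_\kappave,\mathcal{X}_\nu,\mathcal{X}_V$, all equal to $L^2(0,T;L^2(\mttwo))$ equipped with the weak topology, relative compactness of norm-bounded balls is automatic by reflexivity (Banach-Alaoglu), while separability of $L^2(0,T;L^2(\mttwo))$ makes these spaces admissible for the Jakubowski-Skorokhod representation theorem. It thus remains to produce uniform $L^2$ bounds: since $H(\nabla u^\varepsilon)\ge 1$ pointwise, Proposition \ref{prop:area} yields $\stred\int_0^T\int_{\mttwo}|\kappave|^2\,\dif x\,\dif t\le\stred\int_0^T\int_{\mttwo}|\kappave|^2H(\nabla u^\varepsilon)\,\dif x\,\dif t\le C$ and likewise $\stred\|V^\varepsilon\|_{L^2(0,T;L^2)}^2=\stred\int_0^T\int_{\mttwo}|\kappave|^2H(\nabla u^\varepsilon)\,\dif x\,\dif t\le C$, whereas $|\nu^\varepsilon|\le 1$ pointwise; applying Chebyshev to the balls $\{\|\cdot\|_{L^2(0,T;L^2)}\le R\}$ disposes of these three marginals.

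For $\mathcal{X}_I=C([0,T];\mr)$ I would use that on the finite-measure torus $\|H(\nabla u^\varepsilon)\|_{L^1_x}\le C\big(1+\|\nabla u^\varepsilon\|_{L^2_x}\big)$ by Cauchy-Schwarz, so that Proposition \ref{prop:est1} shows $\|H(\nabla u^\varepsilon)\|_{L^1_x}^{1+\theta}$ to be bounded in $L^{2/(1+\theta)}\big(\Omega;L^{2/(1+\theta)}(0,T)\big)$; consequently $I^\varepsilon=\int_0^\tec\|H(\nabla u^\varepsilon)\|_{L^1_x}^{1+\theta}\,\dif s$ is bounded in $L^{2/(1+\theta)}\big(\Omega;W^{1,2/(1+\theta)}(0,T)\big)$, and since $\theta\in(0,1)$ gives $2/(1+\theta)>1$ one has $W^{1,2/(1+\theta)}(0,T)\overset{c}{\hookrightarrow}C([0,T];\mr)$ by Sobolev embedding and Arzel\`a-Ascoli, so Chebyshev again applies. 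Tightness of $(\Lambda^\varepsilon)$ on $\mathcal{X}_{u_0}=H^1(\mttwo)$ is provided by the construction of $u^\varepsilon_0$ at the beginning of Section \ref{sec:regul} (the approximations converge in $H^1(\mttwo)$, and a narrowly convergent sequence of probability laws on a Polish space is tight), and $\mu_W$ is a single Radon measure on the Polish space $C([0,T];\mr)$, hence tight, exactly as in Proposition \ref{tight}. Combining the seven marginals yields tightness of $\{\mu^\varepsilon;\,\varepsilon\in(0,1)\}$ on $\mathcal{X}$.

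I expect this to be essentially a packaging argument, all the analytic substance residing in the a priori bounds of Propositions \ref{prop:est1}, \ref{prop:area} and \ref{prop:fractreg} and Corollary \ref{cor:L2}. The only points requiring a little care are (i) verifying that the weak-topology spaces $\mathcal{X}_\kappave,\mathcal{X}_\nu,\mathcal{X}_V$ carry a countable family of continuous functionals separating points, which is what makes the Jakubowski-Skorokhod theorem applicable, and (ii) in the treatment of $\mathcal{X}_I$, keeping the exponent $\theta$ strictly below $1$ so that $2/(1+\theta)>1$ and the embedding into $C([0,T])$ is genuinely compact. I do not anticipate any deeper obstacle.
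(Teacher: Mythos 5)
Your proposal is correct and follows essentially the same route as the paper: marginal-by-marginal tightness via Chebyshev, using the Aubin--Lions--Simon compact embedding for $\mathcal{X}_u$ (with Propositions \ref{prop:est1}, \ref{prop:fractreg} and Corollary \ref{cor:L2}), weak compactness of $L^2_{t,x}$-balls together with the bound of Proposition \ref{prop:area} (and $H\geq 1$, $|\nu^\varepsilon|\leq 1$) for $\mathcal{X}_\kappave,\mathcal{X}_\nu,\mathcal{X}_V$, the compact embedding $W^{1,2/(1+\theta)}(0,T)\overset{c}{\hookrightarrow}C([0,T];\mr)$ for $\mathcal{X}_I$, and the standard arguments for $\Lambda^\varepsilon$ and $\mu_W$. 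The extra details you supply (the $H\geq1$ comparison, Cauchy--Schwarz for $\|H(\nabla u^\varepsilon)\|_{L^1_x}$, the remark on Jakubowski-admissibility of the weak topologies) are consistent with, and merely spell out, the paper's argument.
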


We apply the Jakubowski-Skorokhod representation theorem and obtain a weakly convergent subsequence $\mu^n=\mu^{\varepsilon_n}$ together with a limit law $\mu$ such that the following result holds true.

\begin{prop}\label{prop:skorokhod}
There exists a probability space $(\tilde{\Omega},\tilde{\mf},\tilde{\prst})$ with a sequence of $\mathcal{X}$-valued random variables $(\tilde{u}^n,\tilde\kappav^n,\tilde\nu^n,\tilde V^n,\tilde I^n,\tilde{W}^n,\tilde u^n_0)$, $n\in\mn,$ and $(\tilde{u},\tilde\kappav,\tilde\nu,\tilde V,\tilde I,\tilde{W},\tilde u_0)$ such that
\begin{enumerate}
 \item the laws of $(\tilde{u}^n,\tilde\kappav^n,\tilde\nu^n,\tilde V^n,\tilde I^n,\tilde{W}^n,\tilde u^n_0)$ and $(\tilde{u},\tilde\kappav,\tilde\nu,\tilde V,\tilde I,\tilde{W},\tilde u_0)$ under $\,\tilde{\prst}$ coincide with $\mu^n$ and $\mu$, respectively,
 \item\label{item:skorokhod} $(\tilde{u}^n,\tilde\kappav^n,\tilde\nu^n,\tilde V^n,\tilde I^n,\tilde{W}^n,\tilde u^n_0)$ converges $\,\tilde{\prst}$-a.s. to $(\tilde{u},\tilde\kappav,\tilde\nu,\tilde V,\tilde I,\tilde{W},\tilde u_0)$ in the topology of $\mathcal{X}$.
\end{enumerate}

\end{prop}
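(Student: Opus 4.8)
The plan is to invoke the Jakubowski--Skorokhod representation theorem \cite{jakubow} directly: the family $\{\mu^\varepsilon;\,\varepsilon\in(0,1)\}$ is already tight on $\mathcal{X}$ by the preceding proposition, so the only thing that needs to be checked is that $\mathcal{X}$ belongs to the class of topological spaces to which that theorem applies. The subtlety here — and the reason one cannot simply use the classical Skorokhod representation theorem — is that $\mathcal{X}$ is not a Polish space: the factors $\mathcal{X}_\kappave$, $\mathcal{X}_\nu$ and $\mathcal{X}_V$ carry the weak topology of $L^2(0,T;L^2(\mttwo))$ and are hence not metrizable.

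First I would record that $\mathcal{X}$ is a completely regular topological space possessing a countable family $\{f_k\}_{k\in\mn}$ of continuous real-valued functions that separates its points, which is precisely the structural hypothesis behind \cite{jakubow}. For the strong factors $\mathcal{X}_u$, $\mathcal{X}_I$, $\mathcal{X}_W$ and $\mathcal{X}_{u_0}$ this is automatic, each being a separable metric space; and for each weak factor one may take $f_k(z)=\langle z,g_k\rangle_{L^2(0,T;L^2)}$ with $\{g_k\}_{k\in\mn}$ dense in $L^2(0,T;L^2(\mttwo))$, since these maps are weakly continuous and separate points. Composing with the coordinate projections produces the required separating family on the product $\mathcal{X}$.

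Then I would extract a subsequence $\varepsilon_n\to 0$ along which $\mu^n:=\mu^{\varepsilon_n}$ converges weakly to some Radon probability measure $\mu$ on $\mathcal{X}$ — possible by tightness and the relevant Prokhorov-type compactness — and apply the theorem, which supplies the probability space $(\tilde\Omega,\tilde\mf,\tilde\prst)$, the random variables $(\tilde u^n,\tilde\kappav^n,\tilde\nu^n,\tilde V^n,\tilde I^n,\tilde W^n,\tilde u_0^n)$ and $(\tilde u,\tilde\kappav,\tilde\nu,\tilde V,\tilde I,\tilde W,\tilde u_0)$ with the prescribed laws, together with $\tilde\prst$-almost sure convergence in the topology of $\mathcal{X}$. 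Finally I would unravel this into its components: $\tilde\prst$-a.s., $\tilde u^n\to\tilde u$ strongly in $L^2(0,T;L^2(\mttwo))$, $\tilde I^n\to\tilde I$ and $\tilde W^n\to\tilde W$ in $C([0,T];\mr)$, $\tilde u_0^n\to\tilde u_0$ strongly in $H^1(\mttwo)$, while $\tilde\kappav^n\rightharpoonup\tilde\kappav$, $\tilde\nu^n\rightharpoonup\tilde\nu$ and $\tilde V^n\rightharpoonup\tilde V$ weakly in $L^2(0,T;L^2(\mttwo))$, which is exactly the claim. The main (and essentially the only) obstacle is the non-metrizability of the weak factors, which is what forces the use of the Jakubowski version of the representation theorem; verifying the countable separating family and reading off the componentwise convergence are routine.
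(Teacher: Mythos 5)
Your proposal is correct and follows exactly the paper's route: tightness of $\{\mu^\varepsilon\}$ from the preceding proposition plus the Jakubowski--Skorokhod representation theorem applied to the non-metrizable product space $\mathcal{X}$, after extracting a weakly convergent subsequence $\mu^n=\mu^{\varepsilon_n}$. The paper states this in one line without detail, so your explicit verification that $\mathcal{X}$ admits a countable point-separating family of continuous functions (pairings with a dense set in $L^2_{t,x}$ for the weak factors) and your componentwise reading of the a.s.\ convergence are welcome but routine additions to the same argument.
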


We are immediately able to identify the approximations $\tilde\kappav^n,\tilde\nu^n,\tilde V^n,\tilde I^n$, $n\in\mn$.

\begin{lemma}
For every $n\in\mn$ it holds true a.s.
\begin{align*}
\tilde \kappav^n&=\diver\bigg(\frac{\nabla \tilde u^n}{H(\nabla \tilde u^n)}\bigg),&\tilde \nu^n&=\frac{\nabla \tilde u^n}{H(\nabla \tilde u^n)},\\
\tilde V^n&=\diver\bigg(\frac{\nabla \tilde u^n}{H(\nabla \tilde u^n)}\bigg)\sqrt{H(\nabla \tilde u^n)},&\tilde I^n&=\int_0^\tec\|H(\nabla\tilde u^n)\|_{L^1_x}^{1+\theta}\,\dif s.
\end{align*}

\begin{proof}
According to our energy estimates and in particular due to Proposition \ref{prop:est1} and the surface area estimate from Proposition \ref{prop:area}, the mappings
$$\supp \mu_{u^n}\rightarrow L^2(0,T;L^2(\mttwo)),\;u\mapsto \diver\bigg(\frac{\nabla u}{H(\nabla u)}\bigg),$$
$$\supp \mu_{u^n}\rightarrow L^2(0,T;L^2(\mttwo)),\;u\mapsto \frac{\nabla u}{H(\nabla u)},$$
$$\supp \mu_{u^n}\rightarrow L^2(0,T;L^2(\mttwo)),\;u\mapsto \diver\bigg(\frac{\nabla u}{H(\nabla u)}\bigg)\sqrt{H(\nabla u)}$$
and
$$\supp \mu_{u^n}\rightarrow C([0,T];\mr),\;u\mapsto \int_0^\tec\|H(\nabla u)\|_{L^1_x}^{1+\theta}\,\dif s$$
are well-defined and measurable. Therefore, the claim follows directly from the equality of joint laws of $(u^n,\kappav^n,\nu^n,V^n,I^n)$ and $(\tilde u^n,\tilde\kappav^n,\tilde\nu^n,\tilde V^n,\tilde I^n)$.
\end{proof}
\end{lemma}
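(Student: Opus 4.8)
The plan is to deduce these identities from the equality of laws in Proposition~\ref{prop:skorokhod}(i): thanks to the uniform a~priori estimates the right-hand sides are Borel-measurable functionals of the single trajectory $u^n$, so the pathwise relations that hold for $(u^n,\kappav^n,\nu^n,V^n,I^n)$ by construction are inherited by the Skorokhod copies.

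First I would verify well-definedness and measurability of the relevant maps on $\supp\mu_{u^n}$. Fix $n\in\mn$. By Propositions~\ref{prop:est1} and \ref{prop:L2} each $u\in\supp\mu_{u^n}$ belongs to $L^2(0,T;H^2(\mttwo))$; since $p\mapsto p/H(p)$ is smooth with bounded derivatives, the map $u\mapsto\nabla u/H(\nabla u)$ takes values in $L^2(0,T;H^1(\mttwo))$, so its distributional divergence $\diver(\nabla u/H(\nabla u))$ is a well-defined element of $L^2(0,T;L^2(\mttwo))$. The surface-area estimate of Proposition~\ref{prop:area} (available here because $N=2$) gives in addition that $\diver(\nabla u/H(\nabla u))\sqrt{H(\nabla u)}\in L^2(0,T;L^2(\mttwo))$ and that $t\mapsto\int_0^t\|H(\nabla u)\|_{L^1_x}^{1+\theta}\,\dif s$ defines an element of $C([0,T];\mr)$. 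Each of these four assignments is continuous on $\supp\mu_{u^n}$ --- the first three with the weak topology on the target --- and hence Borel measurable as a map into $\mathcal{X}_v$, $\mathcal{X}_\nu$, $\mathcal{X}_V$ and $\mathcal{X}_I$ respectively.

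Consequently the set $\Gamma\subset\mathcal{X}$ of tuples $(u,\kappav,\nu,V,I,w,z)$ for which $\kappav=\diver(\nabla u/H(\nabla u))$, $\nu=\nabla u/H(\nabla u)$, $V=\kappav\sqrt{H(\nabla u)}$ and $I=\int_0^\tec\|H(\nabla u)\|_{L^1_x}^{1+\theta}\,\dif s$ is Borel. By the very definition of $\kappav^n,\nu^n,V^n,I^n$ on the original probability space, the random variable $(u^n,\kappav^n,\nu^n,V^n,I^n,W,u^n_0)$ takes values in $\Gamma$ almost surely; since by Proposition~\ref{prop:skorokhod}(i) $(\tilde u^n,\tilde\kappav^n,\tilde\nu^n,\tilde V^n,\tilde I^n,\tilde W^n,\tilde u^n_0)$ has the same law, it is concentrated on $\Gamma$ as well. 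This is exactly the claimed identities, and the argument is carried out for each $n\in\mn$ separately.

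I do not expect a substantial obstacle. The only mildly delicate point is the measurability of the nonlinear divergence functionals with respect to the weak topologies on $\mathcal{X}_v,\mathcal{X}_\nu,\mathcal{X}_V$; this is precisely why one works on $\supp\mu_{u^n}$, where the uniform bounds of Propositions~\ref{prop:est1}, \ref{prop:L2} and \ref{prop:area} force these operations to be continuous there.
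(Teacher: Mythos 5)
Your argument is correct and follows essentially the same route as the paper: one checks that the four functionals of $u$ are well defined and measurable on $\supp\mu_{u^n}$ (using Propositions \ref{prop:est1}, \ref{prop:L2} and \ref{prop:area}) and then transfers the pathwise identities, which hold by construction on the original space, through the equality of joint laws from Proposition \ref{prop:skorokhod}; your explicit Borel graph set $\Gamma$ just spells out what the paper leaves implicit. The only cosmetic caveat is that continuity of these maps into the weak topologies is more than is needed (and not entirely obvious); measurability on a full-measure Borel set suffices, which is all the paper claims.
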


As a consequence of the a.s. convergence $\tilde u^n\rightarrow \tilde u$ in $L^2(0,T;L^2(\mttwo))$ and the uniform bound in Proposition \ref{prop:est1} we deduce that
\begin{equation}\label{eq:weaka}
\nabla \tilde u^n\rightharpoonup\nabla\tilde u\quad\text{in}\quad L^2(\tilde\Omega;L^2(0,T;L^2(\mttwo))).
\end{equation}
Nevertheless, as our model problem is nonlinear in $\nabla\tilde u$ it is crucial to establish the strong convergence in order to be able to pass to the limit.

\begin{prop}\label{prop:young}
For all $p\in[1,2)$, it holds true that
$$\nabla \tilde u^n\rightarrow\nabla\tilde u\quad\text{in}\quad L^p(\tilde\Omega;L^p(0,T;L^p(\mttwo))).$$

\begin{proof}
{\em Step 1:} Due to the weak convergence \eqref{eq:weaka}, there exists a Young measure associated to the sequence $(\nabla \tilde u^n)$, i.e. there exists $\sigma:\tilde\Omega\times[0,T]\times\mttwo\rightarrow\mathcal{P}_1(\mr^2)$, where $\mathcal{P}_1(\mr^2)$ denotes the set of probability measures on $\mr^2$, such that for every $B\in C(\mr^2)$ with linear growth
$$B(\nabla \tilde u^n)\rightharpoonup \bar B\quad\text{in}\quad L^2(\tilde\Omega; L^2(0,T;L^2(\mttwo)))$$
where
$$\bar B(t,x)=\langle\sigma_{t,x}, B\rangle\quad\text{a.e.}$$
We refer the reader to \cite{malek} for a thorough exposition of the concept of Young measures, the above applied result can be found in \cite[Theorem 4.2.1, Corollary 4.2.10]{malek}.
The desired strong convergence of $\nabla\tilde u^n$ will be shown once we prove that for a.e. $\omega,t,x$ the Young measure $\sigma$ is a Dirac mass.

{\em Step 2:} In this part of the proof, we show that the following relation holds true a.e.
\begin{equation}\label{eq:young}
\int_{\mr^2}\frac{|p|^2}{\sqrt{1+|p|^2}}\,\dif\sigma_{t,x}(p)=\bigg(\int_{\mr^2}p\,\dif\sigma_{t,x}(p)\bigg)\cdot\bigg(\int_{\mr^2}\frac{p}{\sqrt{1+|p|^2}}\,\dif\sigma_{t,x}(p)\bigg).
\end{equation}
Towards this end, we observe that due to Proposition \ref{prop:skorokhod},
$$\tilde\kappav^n\rightharpoonup\tilde\kappav,\quad\tilde\nu^n\rightharpoonup\tilde\nu\quad\text{in}\quad L^2(0,T;L^2(\mttwo))\quad\text{a.s.}$$
Using $|\tilde\nu^n|\leq 1$ and the Vitali convergence Theorem we also deduce that $\tilde\nu\in L^2(\tilde{\Omega};L^2(0,T;L^2(\mttwo)))$ with
\begin{align*}
	\tilde\nu^n\rightharpoonup\tilde\nu\quad\text{in}\quad L^2(\tilde{\Omega};L^2(0,T;L^2(\mttwo)))
\end{align*}
Besides, $\tilde\nu^n$ is a continuous and bounded function of $\nabla \tilde u^n$ hence, according to {\em Step 1}, $\tilde\nu$ is given by
$$\tilde\nu(t,x)=\int_{\mr^2}\frac{p}{\sqrt{1+|p|^2}}\,\dif\sigma_{t,x}(p).$$

Using integration by parts, it follows easily that $\tilde\kappav=\diver\tilde\nu$ almost everywhere.
Thus, on the one hand, we employ the Div-Curl Lemma type argument from \cite[Theorem 3.1, (3.13)]{EvSp95} and obtain
$$\nabla \tilde u^n\cdot\tilde\nu^n\rightharpoonup\nabla u\cdot\tilde\nu\quad\text{in}\quad L^2(0,T;L^2(\mttwo))\quad\text{a.s.}$$
and consequently by the Vitali convergence theorem
$$\nabla \tilde u^n\cdot\tilde\nu^n\rightharpoonup\nabla u\cdot\tilde\nu\quad\text{in}\quad L^2(\tilde\Omega;L^2(0,T;L^2(\mttwo))).$$
On the other hand, we deduce from {\em Step 1} that the weak limit of $\nabla \tilde u^n\cdot\tilde\nu^n$ is also given by
$$\int_{\mr^2}\frac{|p|^2}{\sqrt{1+|p|^2}}\,\sigma_{t,x}(p)$$
and \eqref{eq:young} follows.

{\em Step 3:} Next, we will infer from \eqref{eq:young} that $\sigma$ reduces to a Dirac mass for a.e. $\omega,t,x$. To simplify the notation, let us denote $f(p)=p,$ $g(p)=\frac{p}{\sqrt{1+|p|^2}}$. Then \eqref{eq:young} reads as
\begin{equation}\label{eq:product}
\langle\sigma, f\cdot g\rangle=\langle \sigma,f\rangle\cdot\langle\sigma,g\rangle.
\end{equation}
Since $\langle\nu,1\rangle=1$, the left hand side of \eqref{eq:product} can be rewritten as
\begin{equation*}
\begin{split}
\frac{1}{2}\bigg(\int_{\mr^2}f(p)\cdot g(p)&\dif\sigma(p)\int_{\mr^2}\dif \sigma (q)+\int_{\mr^2}f(q)\cdot g(q)\dif\sigma( q)\int_{\mr^2}\dif\sigma( p)\bigg)\\
&=\frac{1}{2}\int_{\mr^2}\big(f(p)\cdot g(p)+f(q)\cdot g(q)\big)\dif\sigma\otimes\sigma(p,q)
\end{split}
\end{equation*}
whereas for the right hand side, we have
\begin{equation*}
\begin{split}
\frac{1}{2}\bigg(\int_{\mr^2}f(p)&\dif\sigma(p)\cdot\int_{\mr^2}g(q)\dif\sigma(q)+\int_{\mr^2}f(q)\dif\sigma( q)\cdot\int_{\mr^2}g(p)\dif\sigma( p)\bigg)\\
&=\frac{1}{2}\int_{\mr^2}\big(f(p)\cdot g(q)+f(p)\cdot g(q)\big)\dif\sigma\otimes\sigma(p,q).
\end{split}
\end{equation*}
Thus subtracting the right hand side from the left hand side we deduce that
$$\int_{\mr^2}\big(f(p)-f(q)\big)\cdot\big(g(p)-g(q)\big)\dif\sigma\otimes\sigma(p,q)=0.$$
To conclude, we first observe that
\begin{equation}\label{eq:F}
\begin{split}
F(p,q)&=\big(f(p)-f(q)\big)\cdot\big(g(p)-g(q)\big)> 0\qquad\forall p,q\in\mr^2,\,p\neq q,\\
F(p,p)&=0.
\end{split}
\end{equation}
This follows from the strict convexity of the function $G$, $G(p):=\sqrt{1+|p|^2}$, which is equivalent to the strict monotonicity of $G'=g$. Since $f$ is the identity this proves \eqref{eq:F}.

As a consequence, the support of $\sigma$ needs to be a single point hence necessarily $\sigma_{t,x}=\delta_{\nabla\tilde u(t,x)}$ almost everywhere. 

{\em Step 4:} Since a Young measure being Dirac is equivalent to the convergence in measure we conclude by making use of the a priori estimate from Proposition \ref{prop:est1}.
\end{proof}
\end{prop}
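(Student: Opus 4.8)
The plan is to promote the weak convergence $\nabla\tilde u^n\rightharpoonup\nabla\tilde u$ of \eqref{eq:weaka} to strong convergence in $L^p$ for $p<2$ by showing that the Young measure generated by $(\nabla\tilde u^n)$ reduces to a Dirac mass for a.e.\ $(\omega,t,x)$. First I would use the uniform bound $\sup_n\tilde\stred\|\nabla\tilde u^n\|_{L^2(0,T;L^2)}^2<\infty$ from Proposition \ref{prop:est1} to extract, along a further subsequence, a Young measure $\sigma=\sigma_{\omega,t,x}\in\mathcal{P}_1(\mr^2)$ with the property that $B(\nabla\tilde u^n)\rightharpoonup\langle\sigma_{\cdot},B\rangle$ weakly in $L^2(\tilde\Omega\times(0,T)\times\mttwo)$ for every continuous $B$ of at most linear growth (see \cite{malek}). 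Its barycenter $\langle\sigma_{t,x},p\rangle$ is then $\nabla\tilde u$, and since $p\mapsto p/\sqrt{1+|p|^2}$ is bounded and continuous, the Skorokhod limit $\tilde\nu$ of $\tilde\nu^n=\nabla\tilde u^n/H(\nabla\tilde u^n)$ must equal $\langle\sigma_{\cdot},p/\sqrt{1+|p|^2}\rangle$ — here one first upgrades the $\tilde\prst$-a.s.\ weak convergence of Proposition \ref{prop:skorokhod} to weak convergence in $L^2(\tilde\Omega\times(0,T)\times\mttwo)$ using $|\tilde\nu^n|\le1$ and Vitali's theorem. Once $\sigma_{\omega,t,x}$ is shown to be a Dirac mass, the strong $L^p$ convergence for $p\in[1,2)$ follows: a Dirac Young measure is equivalent to convergence in measure of $\nabla\tilde u^n$, and the $L^2$ bound of Proposition \ref{prop:est1} provides the equi-integrability of $(|\nabla\tilde u^n|^p)$ needed to pass from convergence in measure to convergence in $L^p(\tilde\Omega\times(0,T)\times\mttwo)$.

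The crux is the pointwise identity
\[
\Big\langle\sigma_{t,x},\frac{|p|^2}{\sqrt{1+|p|^2}}\Big\rangle=\langle\sigma_{t,x},p\rangle\cdot\Big\langle\sigma_{t,x},\frac{p}{\sqrt{1+|p|^2}}\Big\rangle\qquad\text{for a.e. }(\omega,t,x).
\]
The left-hand side is just the weak limit of the function $\nabla\tilde u^n\cdot\tilde\nu^n=\bigl(|p|^2/\sqrt{1+|p|^2}\bigr)(\nabla\tilde u^n)$ of $\nabla\tilde u^n$, which the Young measure evaluates to $\langle\sigma_{\cdot},|p|^2/\sqrt{1+|p|^2}\rangle$. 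For the right-hand side I would invoke a compensated-compactness (div--curl) argument: $\nabla\tilde u^n$ is curl-free, while $\diver\tilde\nu^n=\tilde\kappav^n$ is bounded in $L^2$ by the mean-curvature estimate of Proposition \ref{prop:area}, so the div--curl lemma of \cite[Theorem 3.1]{EvSp95} gives $\nabla\tilde u^n\cdot\tilde\nu^n\rightharpoonup\nabla\tilde u\cdot\tilde\nu$, first $\tilde\prst$-a.s.\ in $L^2(0,T;L^2)$ and then, via Vitali again, weakly in $L^1$ over $(\omega,t,x)$. Equating the two expressions for the weak limit and inserting $\nabla\tilde u=\langle\sigma,p\rangle$, $\tilde\nu=\langle\sigma,p/\sqrt{1+|p|^2}\rangle$ yields the displayed identity. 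This div--curl step, together with the $\omega$-wise-to-global upgrade it requires (no higher moment estimates being available), is where I expect the main difficulty to lie.

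To finish, write $f(p)=p$ and $g(p)=p/\sqrt{1+|p|^2}$ and use $\langle\sigma,1\rangle=1$ to symmetrise the identity against the product measure $\sigma\otimes\sigma$, obtaining
\[
\int_{\mr^2}\int_{\mr^2}\bigl(f(p)-f(q)\bigr)\cdot\bigl(g(p)-g(q)\bigr)\,\dif\sigma(p)\,\dif\sigma(q)=0.
\]
Since $g=\nabla G$ for the strictly convex function $G(p)=\sqrt{1+|p|^2}$, the map $g$ is strictly monotone, so the integrand is nonnegative and strictly positive off the diagonal $\{p=q\}$. Hence $\sigma_{\omega,t,x}\otimes\sigma_{\omega,t,x}$ is concentrated on the diagonal, which forces $\sigma_{\omega,t,x}=\delta_{a(\omega,t,x)}$ for a.e.\ $(\omega,t,x)$, and comparing barycenters identifies $a=\nabla\tilde u$. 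Combined with the reduction described in the first paragraph, this completes the proof.
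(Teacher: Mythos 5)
Your proposal is correct and follows essentially the same route as the paper: a Young measure generated by $(\nabla\tilde u^n)$, identification of the product relation via the Evans--Spruck div--curl argument together with Vitali upgrades from a.s.\ weak convergence to weak convergence over $\tilde\Omega\times(0,T)\times\mttwo$, reduction to a Dirac mass by symmetrizing against $\sigma\otimes\sigma$ and using strict monotonicity of $\nabla G$ for $G(p)=\sqrt{1+|p|^2}$, and finally convergence in measure plus equi-integrability from the $L^2$ bound to conclude strong $L^p$ convergence for $p\in[1,2)$.
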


Note that in particular we have proved that
$$H(\nabla\tilde u^n)\rightharpoonup H(\nabla\tilde u )\quad\text{in}\quad L^2(\tilde\Omega;L^2(0,T;L^2(\mathbb T^2)).$$
and that for all $p\in[1,2)$
$$H(\nabla\tilde u^n)\rightarrow H(\nabla\tilde u )\quad\text{in}\quad L^p(\tilde\Omega;L^p(0,T;L^p(\mathbb T^2)).$$
(Such a convergence of the area measures is a crucial property also in many related results for deterministic mean curvature flow, see for example \cite{LuSt95}.) As a consequence, we are able to identify the limits $\tilde V$ and $\tilde I$.

\begin{cor}\label{cor:id}
It holds true a.s.
\begin{align*}
\tilde V&=\diver\bigg(\frac{\nabla \tilde u}{H(\nabla \tilde u)}\bigg)\sqrt{H(\nabla \tilde u)},&\tilde I&=\int_0^\tec\|H(\nabla\tilde u)\|_{L^1_x}^{1+\theta}\,\dif s.
\end{align*}

\begin{proof}
In order to identify the limit of $\tilde V^n$, observe that due to Proposition \ref{prop:young}, for all $q\in[1,\infty)$
$$\frac{\nabla \tilde u^n}{H(\nabla \tilde u^n)}\rightarrow \frac{\nabla \tilde u}{H(\nabla \tilde u)}\quad\text{in}\quad L^q(\tilde\Omega;L^q(0,T;L^q(\mttwo)))$$
hence according to Proposition \ref{prop:area}
$$\diver\bigg(\frac{\nabla \tilde u^n}{H(\nabla \tilde u^n)}\bigg)\rightharpoonup\diver\bigg( \frac{\nabla \tilde u}{H(\nabla \tilde u)}\bigg)\quad\text{in}\quad L^2(\tilde\Omega;L^2(0,T;L^2(\mttwo))).$$
Besides,
\begin{equation}\label{eq:sqrtH}
\sqrt{H(\nabla\tilde u^n)}\rightarrow \sqrt{H(\nabla\tilde u)}\quad\text{in}\quad  L^2(\tilde\Omega;L^2(0,T;L^2(\mttwo)))
\end{equation}
and consequently
$$\diver\bigg(\frac{\nabla \tilde u^n}{H(\nabla \tilde u^n)}\bigg)\sqrt{H(\nabla\tilde u^n)}\rightharpoonup\diver\bigg( \frac{\nabla \tilde u}{H(\nabla \tilde u)}\bigg)\sqrt{H(\nabla\tilde u)}$$
in $ L^1(\tilde\Omega;L^1(0,T;L^1(\mttwo)))$ which gives the identification of $\tilde V$.

Identification of $\tilde I$ follows from the fact that for every $t\in[0,T]$
$$\int_0^t\|H(\nabla\tilde u^n)\|_{L^1_x}^{1+\theta}\,\dif s\rightarrow \int_0^t\|H(\nabla\tilde u)\|_{L^1_x}^{1+\theta}\,\dif s$$
according to Proposition \ref{prop:young}, Proposition \ref{prop:est1} and the Vitali convergence theorem.
\end{proof}
\end{cor}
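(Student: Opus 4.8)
The plan is to transfer the strong convergence of the gradients obtained in Proposition \ref{prop:young} to the various nonlinear functionals entering $\tilde V^n$ and $\tilde I^n$, combine this with the a priori bounds of Propositions \ref{prop:est1} and \ref{prop:area}, and then match the resulting limits against the almost sure limits $\tilde V$, $\tilde I$ supplied by the Jakubowski--Skorokhod representation in Proposition \ref{prop:skorokhod}. The guiding principle is that the curvature factor $\tilde\kappav^n$ only converges weakly in $L^2$, so in the product defining $\tilde V^n$ it must be paired with a strongly convergent companion; this is precisely why the square root $\sqrt{H(\nabla u^\varepsilon)}$ (rather than $H(\nabla u^\varepsilon)$ itself) was built into the definition of $V^\varepsilon$.

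For $\tilde V$: since $\nabla\tilde u^n\to\nabla\tilde u$ in measure by Proposition \ref{prop:young} and the map $p\mapsto p/H(p)$ is continuous and bounded by $1$, dominated convergence gives $\nabla\tilde u^n/H(\nabla\tilde u^n)\to\nabla\tilde u/H(\nabla\tilde u)$ strongly in $L^q(\tilde\Omega\times[0,T]\times\mttwo)$ for every finite $q$, so that $\tilde\kappav^n=\diver\big(\nabla\tilde u^n/H(\nabla\tilde u^n)\big)\to\diver\big(\nabla\tilde u/H(\nabla\tilde u)\big)$ in the sense of distributions; together with the uniform $L^2$ bound on $\tilde\kappav^n$ coming from Proposition \ref{prop:area} (recall $H\ge 1$) this upgrades to weak convergence $\tilde\kappav^n\rightharpoonup\diver\big(\nabla\tilde u/H(\nabla\tilde u)\big)$ in $L^2(\tilde\Omega\times[0,T]\times\mttwo)$. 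Next, $\sqrt{H(\nabla\tilde u^n)}=(1+|\nabla\tilde u^n|^2)^{1/4}$ converges in measure, and since $\{H(\nabla\tilde u^n)\}$ is bounded in $L^2(\tilde\Omega\times[0,T]\times\mttwo)$ by Proposition \ref{prop:est1} with $p=2$ (hence uniformly integrable), the Vitali theorem yields $\sqrt{H(\nabla\tilde u^n)}\to\sqrt{H(\nabla\tilde u)}$ strongly in $L^2(\tilde\Omega\times[0,T]\times\mttwo)$. A weak-strong convergence argument then gives $\tilde V^n=\tilde\kappav^n\sqrt{H(\nabla\tilde u^n)}\rightharpoonup\diver\big(\nabla\tilde u/H(\nabla\tilde u)\big)\sqrt{H(\nabla\tilde u)}$ weakly in $L^1(\tilde\Omega\times[0,T]\times\mttwo)$, and comparing with the almost sure limit $\tilde V$ from Proposition \ref{prop:skorokhod} identifies $\tilde V$ as claimed.

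For $\tilde I$: from $H(\nabla\tilde u^n)\to H(\nabla\tilde u)$ in $L^1(\tilde\Omega\times[0,T]\times\mttwo)$ (a consequence of Proposition \ref{prop:young}) one extracts a subsequence along which $\|H(\nabla\tilde u^n(t))\|_{L^1_x}\to\|H(\nabla\tilde u(t))\|_{L^1_x}$, hence $\|H(\nabla\tilde u^n(t))\|_{L^1_x}^{1+\theta}\to\|H(\nabla\tilde u(t))\|_{L^1_x}^{1+\theta}$, for a.e. $(\omega,t)$. Since $\theta<1$, these powers are bounded in $L^{2/(1+\theta)}(\tilde\Omega\times[0,T])$ by Proposition \ref{prop:est1} (using $\|H(\nabla\tilde u^n(t))\|_{L^1_x}\le C(1+\|\nabla\tilde u^n(t)\|_{L^2_x})$), hence uniformly integrable, and a further application of Vitali gives $\int_0^t\|H(\nabla\tilde u^n)\|_{L^1_x}^{1+\theta}\,\dif s\to\int_0^t\|H(\nabla\tilde u)\|_{L^1_x}^{1+\theta}\,\dif s$; comparing with the almost sure convergence $\tilde I^n\to\tilde I$ in $C([0,T];\mr)$ and using continuity in $t$ completes the identification.

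The genuinely routine parts are the two Vitali arguments and the distributional passage to the limit; the only mild subtlety is the weak-$L^2$-times-strong-$L^2$ product used for $\tilde V$, which is handled by the decomposition $\tilde\kappav^n\sqrt{H(\nabla\tilde u^n)}=\tilde\kappav^n\big(\sqrt{H(\nabla\tilde u^n)}-\sqrt{H(\nabla\tilde u)}\big)+\tilde\kappav^n\sqrt{H(\nabla\tilde u)}$ together with the uniform $L^2$ bound on $\tilde\kappav^n$. The heavy lifting — in particular the Dirac property of the Young measure that produces the strong convergence of the gradients — has already been done in Proposition \ref{prop:young}, so no new ideas are required at this step.
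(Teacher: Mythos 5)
Your argument is correct and follows essentially the same route as the paper: strong $L^q$ convergence of $\nabla\tilde u^n/H(\nabla\tilde u^n)$ from Proposition \ref{prop:young}, the uniform bound of Proposition \ref{prop:area} (with $H\geq1$) to upgrade the divergence to weak $L^2$ convergence, strong $L^2$ convergence of $\sqrt{H(\nabla\tilde u^n)}$ via Vitali, a weak--strong product argument in $L^1$ for $\tilde V$, and a Vitali/uniform-integrability argument based on Proposition \ref{prop:est1} for $\tilde I$. The explicit decomposition for the weak--strong product and the measure-theoretic phrasing of \eqref{eq:sqrtH} are only cosmetic variations on the paper's proof.
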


\subsection{Identification of the limit}\label{subsec:5.3}

Let $(\tilde{\mf}_t)$ be the $\tilde{\prst}$-augmented canonical filtration of the process $(\tilde{u},\tilde{W},\tilde u_0)$. Note that $\tilde V$ and $\tilde I$ are adapted to $(\tilde\mf_t)$ as well due to Corollary \ref{cor:id}.
Now everything is prepared to establish the final existence result, which in particular proves the main Theorem \ref{thm:main}.

\begin{thm}\label{prop:martsol1}
$\big((\tilde{\Omega},\tilde{\mf},(\tilde{\mf}_t),\tilde{\prst}),\tilde u,\tilde{W}\big)$
is a weak martingale solution to \eqref{eq:smcf2} with the initial law $\Lambda$. That is, it satisfies Definition \ref{def:sol} and in particular for every $\varphi\in C^\infty(\mttwo)$ it holds true for a.e. $t\in[0,T]$ a.s. that
\begin{equation}\label{eq:weak1}
 \begin{split}
  \langle \tilde u(t),\varphi\rangle&=\langle \tilde u_0,\varphi\rangle+\frac{1}{2}\int_0^t\langle \tilde u,\Delta\varphi\rangle\dif s+\frac{1}{2}\int_0^t\langle \sqt\tilde v,\varphi\rangle\dif s\\
&\qquad+\int_0^t\langle \sqt\dif\tilde W,\varphi\rangle,
 \end{split}
\end{equation}
where
\begin{equation}\label{eq:v1}
\tilde v=\diver\bigg(\frac{\nabla \tilde u}{H(\nabla \tilde u)}\bigg).
\end{equation}
\end{thm}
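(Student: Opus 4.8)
The plan is to pass to the limit $\varepsilon\to 0$ (along the subsequence $\varepsilon_n$) in the weak formulation \eqref{eq:weak} for the viscous solutions, using the compactness and identification results already established. I would first verify the easy probabilistic facts: $\tilde W$ is an $(\tilde{\mf}_t)$-Wiener process, by the same L\'evy characterization argument as in Lemma \ref{lemma:wiener} (equality of laws of $(\tilde u^n,\tilde W^n)$ and $(u^n,W)$ together with the Vitali convergence theorem). The properties (i)--(v) of Definition \ref{def:sol} follow from the a priori bounds: (iii) from Proposition \ref{prop:est1} (the $L^2$ gradient bound is uniform in $\varepsilon$ for $p=2$), (iv) from Proposition \ref{prop:area} (the $\sup_t\int H\,\dif x$ bound), and (v) from the mean-curvature bound in Proposition \ref{prop:area} combined with the identification $\tilde V = \diver(\nabla\tilde u/H(\nabla\tilde u))\sqrt{H(\nabla\tilde u)}$ from Corollary \ref{cor:id}, which shows $\tilde v\in L^2(H(\nabla\tilde u)\,\dif\tilde\prst\otimes\dif t\otimes\dif x)$. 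Item (vi), i.e.\ \eqref{eq:weak1}--\eqref{eq:v1}, is the substantive claim, with $\tilde u_0$ the $\tilde\mf_0$-measurable limit of $\tilde u^n_0$ whose law is $\Lambda$ since $\Lambda^\varepsilon\overset{*}{\rightharpoonup}\Lambda$.

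For the deterministic terms in the martingale $\tilde M^n(t)$ (built as in Subsection \ref{subsec:identif} but now with coefficient $\tfrac12+\varepsilon_n$ and without the $\eta$-term): the term $\langle\tilde u^n,\Delta\varphi\rangle$ passes to the limit using $\tilde u^n\to\tilde u$ in $L^2(0,T;L^2)$, and the coefficient $\tfrac12+\varepsilon_n\to\tfrac12$; the mean-curvature term $\int_0^t\langle H(\nabla\tilde u^n)\tilde\kappav^n,\varphi\rangle\,\dif s$ passes to the limit because $\tilde\kappav^n\rightharpoonup\tilde\kappav$ weakly in $L^2(0,T;L^2)$ a.s.\ while $H(\nabla\tilde u^n)\to H(\nabla\tilde u)$ strongly in $L^2(0,T;L^2)$ a.s.\ by Proposition \ref{prop:young}, so the product converges weakly in $L^1(0,T;L^1)$ a.s.\ — exactly the argument in the proof of Lemma \ref{lem:mart}. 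Uniform integrability for upgrading a.s.\ convergence to convergence of expectations against $\gamma(\varrho_s\tilde u,\varrho_s\tilde W)$ comes from the bound $\tilde\stred|\tilde M^n(t)|^p\leq C(1+\tilde\stred\int_0^T\|\nabla\tilde u^n\|_{L^2}^p)$, valid only for $p=2$ here (no higher moments), via Burkholder--Davis--Gundy and Proposition \ref{prop:est1}; this still suffices for Vitali.

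The main obstacle is that, because no moment bound beyond $p=2$ is available for $\nabla\tilde u$ and no time-continuity (even weak) of $\tilde u^n$ has been obtained, the passage to the limit in the martingales can only be carried out for a.e.\ $t$, and the quadratic-variation identification of Subsection \ref{subsec:identif} does not apply directly: the strong convergence of gradients is only in $L^p_{\omega,t,x}$ for $p<2$, so one cannot pass to the limit in $\langle H(\nabla\tilde u^n),\varphi\rangle^2$ inside the bracket. I would therefore invoke the method of densely defined martingales of \cite[Theorem 4.13]{hof} to handle $\tilde M$ being defined only for a.e.\ $t$, together with the local-martingale identification of \cite{hofse}: introduce a localizing sequence of stopping times adapted to the extra component $\tilde I = \int_0^{\tec}\|H(\nabla\tilde u)\|_{L^1_x}^{1+\theta}\,\dif s$ (whose a.s.\ continuity and identification are guaranteed by Corollary \ref{cor:id}, and which controls $\|H(\nabla\tilde u)\|_{L^1_x}$ hence, by Sobolev/interpolation on $\mathbb T^2$, the relevant $L^2_x$ norm up to integrable time factors), so that on each localized interval one recovers enough integrability to identify the stochastic integral $\int_0^t\langle H(\nabla\tilde u)\,\dif\tilde W,\varphi\rangle$ as the bracket of $\tilde M$ against $\tilde W$. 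Passing the localization to the limit and using that $\bigcup$ of the stopping regions exhausts $[0,T]$ (because $\tilde I(T)<\infty$ a.s.) then yields \eqref{eq:weak1} for a.e.\ $t$, a.s., completing the verification of Definition \ref{def:sol} and hence of Theorem \ref{thm:main}.
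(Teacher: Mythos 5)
Your overall architecture is the paper's: pass to the limit in the martingale identities only for $t$ in a dense set $\mathcal D$ of full measure, use the densely defined martingale result of \cite{hof}, and localize with stopping times built from the extra path-space component $\tilde I$, then remove the localization since $\sup_R\tau_R(\tilde I)=T$ a.s. However, two steps as you state them would fail. First, your treatment of the mean-curvature term relies on ``$H(\nabla\tilde u^n)\to H(\nabla\tilde u)$ strongly in $L^2(0,T;L^2(\mttwo))$ a.s.\ by Proposition \ref{prop:young}''. That proposition (and the discussion following it) gives strong convergence of the gradients, hence of $H(\nabla\tilde u^n)$, only in $L^p$ for $p\in[1,2)$; in $L^2$ the convergence remains weak, because only second moments of $\nabla u^\varepsilon$ are controlled and so $|H(\nabla\tilde u^n)|^2$ is not uniformly integrable. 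The argument of Lemma \ref{lem:mart} does not transfer: in Section \ref{sec:viscous} the path space contained $L^2(0,T;H^1)$ with the strong topology, which is exactly what is lost here. The paper's fix is the factorization $H(\nabla\tilde u^n)\,\tilde v^n=\tilde V^n\sqrt{H(\nabla\tilde u^n)}$ with $\tilde V^n=\tilde v^n\sqrt{H(\nabla\tilde u^n)}$: the component $\tilde V^n$ converges weakly in $L^2$ a.s.\ (this is why $V^\varepsilon$ was carried through the tightness argument, using the bound $\int|\tilde v^n|^2H\,\dif x\,\dif t$ from Proposition \ref{prop:area}), while $\sqrt{H(\nabla\tilde u^n)}\to\sqrt{H(\nabla\tilde u)}$ strongly in $L^2$ because $|\sqrt a-\sqrt b|^2\le|a-b|$ and $H(\nabla\tilde u^n)\to H(\nabla\tilde u)$ in $L^1$. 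Without this (or an equivalent) device the identification of $\tilde M(t)$ as the a.s.\ limit of $\tilde M^n(t)$ is not justified.

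Second, your uniform-integrability claim is too weak: the $p=2$ bound $\tilde\stred|\tilde M^n(t)|^2\le C$ suffices for the linear identity, but the quadratic and cross-variation identities require passing to the limit in $(\tilde M^n)^2$ and $\tilde M^n\tilde W^n$, i.e.\ uniform integrability of $(\tilde M^n(t))^2$, which needs moments strictly above $2$ — precisely what is unavailable before stopping. This is the actual reason for the localization: for the stopped processes one has
\begin{equation*}
\tilde\stred\big|\tilde M^n\big(t\wedge\tau_{R_m}(\tilde I^n)\big)\big|^{2+\kappa}
\le C\,\tilde\stred\bigg[\sup_{0\le t\le T}\|H(\nabla\tilde u^n)\|_{L^1_x}\int_0^{\tau_{R_m}(\tilde I^n)}\|H(\nabla\tilde u^n)\|_{L^1_x}^{1+\theta}\,\dif r\bigg]\le C R_m,
\end{equation*}
using Proposition \ref{prop:area}; your ``Sobolev/interpolation to an $L^2_x$ norm'' heuristic is not what is needed (only $\langle H,\varphi\rangle\le\|\varphi\|_\infty\|H\|_{L^1_x}$ and an interpolation in time enter). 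Finally, you must justify $\tau_{R_m}(\tilde I^n)\to\tau_{R_m}(\tilde I)$ a.s.\ in order to pass to the limit in the stopped identities: the hitting-time functional $\tau_R(\cdot)$ is not continuous on $C([0,T];\mr)$, and Corollary \ref{cor:id} does not provide this; one needs the selection of a sequence $R_m\to\infty$ along which $\tau_{R_m}(\cdot)$ is a.s.\ continuous at $\tilde I$, as in \cite[Lemmas 3.5--3.6]{hofse}. With these three repairs your argument coincides with the paper's proof.
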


The proof is based on a refined identification limit procedure which in comparison to Subsection \ref{subsec:identif} includes two new ingredients.
First, the method of densely defined martingales which was developed in \cite{hof} is applied in order to deal with martingales that are only defined for almost all times and no continuity properties are a priori known (see \cite[Theorem 4.13, Appendix]{hof}). In that case, the corresponding quadratic variations are not well defined and the approach of Subsection \ref{subsec:identif} does not apply directly.
Second, the local martingales approach of \cite{hofse} is invoked to overcome the difficulty in the passage to the limit.

Both issues originate in the lack of uniform moment estimates for $\nabla u^n$. Indeed, on the one hand, we are not able to obtain tightness of $(u^n)$ in any space of continuous (or weakly continuous) functions in time and consequently the passage to the limit in the corresponding martingales can be performed only for a.e. $t\in[0,T]$.
On the other hand, we are only able to establish the strong convergence $\nabla\tilde u^n\rightarrow \nabla\tilde u$ in $L^p(0,T;L^p(\mttwo))$ a.s. for $p\in[1,2)$ and the convergence in $L^2(0,T;L^2(\mttwo))$ remains weak, which is not enough to pass to the limit in the quadratic variation. Note that the problem lies in particular in the weak convergence with respect to time rather than space as we consider weak solutions in $x$ anyway.

We claim that as a consequence of Proposition \ref{prop:skorokhod}, it holds true that
\begin{equation}\label{eq:convinmeas}
\tilde u^n\rightarrow\tilde u\quad\text{ in }\quad L^2(\mttwo)\quad\text{ in measure }\quad \tilde\prst\otimes\mathcal{L}_{[0,T]}
\end{equation}
and consequently there exists $\mathcal{D}\subset[0,T]$ of full Lebesgue measure such that (up to subsequence)
\begin{equation}\label{convinmeas2}
\tilde u^n(t)\rightarrow\tilde u(t)\quad\text{ in }\quad L^2(\mttwo)\quad\tilde\prst\text{-a.s.}\quad\forall t\in\mathcal{D}.
\end{equation}
Indeed, \eqref{eq:convinmeas} follows directly from the dominated convergence theorem since for every $\delta\in(0,1)$
\begin{align*}
\tilde\prst\otimes\mathcal{L}_{[0,T]}&\Big(\|\tilde u^n-\tilde u\|_{L^2_x}>\delta\Big)=\tilde\stred\int_0^T\ind_{\{\|\tilde u^n(t)-\tilde u(t)\|_{L^2_x}>\delta\}}\,\dif t
\end{align*}
where for a.e. $\omega$ the inner integral converges to $0$ due to Proposition \ref{prop:skorokhod}.

Note that $\mathcal{D}$ is dense in $[0,T]$ since it is complement of a set with zero Lebesgue measure.
For all $t\in\mathcal{D}$ and a test function $\varphi\in C^\infty(\mttwo)$ we define
\begin{equation*}
\begin{split}
M^n(t)&=\big\langle u^n(t),\varphi\big\rangle-\big\langle u^n(0),\varphi\big\rangle-\Big(\frac{1}{2}+\varepsilon_n\Big)\int_0^t\langle u^n,\Delta\varphi\rangle\dif s-\frac{1}{2}\int_0^t\langle H(\nabla u^n) v^n,\varphi\rangle\dif s,\\
\tilde{M}^n(t)&=\big\langle \tilde u^n(t),\varphi\big\rangle-\big\langle \tilde u^n_0,\varphi\big\rangle-\Big(\frac{1}{2}+\varepsilon_n\Big)\int_0^t\langle \tilde u^n,\Delta\varphi\rangle\dif s-\frac{1}{2}\int_0^t\langle H(\nabla \tilde u^n)\tilde v^n,\varphi\rangle\dif s,\\
\tilde{M}(t)&=\big\langle \tilde u(t),\varphi\big\rangle-\big\langle \tilde u_0,\varphi\big\rangle-\frac{1}{2}\int_0^t\langle \tilde u,\Delta\varphi\rangle\dif s-\frac{1}{2}\int_0^t\langle \sqt\tilde v,\varphi\rangle\dif s,
\end{split}
\end{equation*}
and recall that
$$v^n=\diver\bigg(\frac{\nabla  u^n}{H(\nabla u^n)}\bigg),\qquad\tilde v^n=\diver\bigg(\frac{\nabla \tilde u^n}{H(\nabla\tilde u^n)}\bigg).$$

\begin{prop}\label{prop:mart1}
The process $\tilde{W}$ is a $(\tilde{\mf}_t)$-Wiener process, the processes 
$$\tilde{M},\qquad\tilde{M}^2-\int_0^\tec\big\langle H(\nabla\tilde{u}),\varphi\big\rangle^2\,\dif r,\qquad\tilde{M}\tilde{W}-\int_0^\tec\big\langle H(\nabla \tilde{u}),\varphi\big\rangle\,\dif r,$$
indexed by $t\in\mathcal{D}$, are $(\tilde{\mf}_t)$-local martingales.

\begin{proof}
The first claim follows immediately by the same reasoning as in Lemma \ref{lemma:wiener}. To prepare the proof of the remaining parts, let $R\in\mr^+$ and define
$$\tau_R:C([0,T];\mr)\rightarrow [0,T],\quad f\mapsto \inf\big\{t>0; |f(t)|\geq R\big\}.$$
(with the convention $\inf\emptyset=T$).
Then for every $I^n$, one may use Proposition \ref{prop:est1} and deduce that $\tau_R(I^n)$ defines an $(\mf_t)$-stopping time and the blow up does not occur in a finite time, i.e.
\begin{equation}\label{eq:nonblowup}
\sup_{R\in\mr^+}\tau_R(I^n)=T\qquad\text{a.s.}
\end{equation}
The same is valid for the case of $\tilde I^n$ and $\tilde I$.
The stopping times $\tau_R(\tilde I)$ will play the role of a localizing sequence for the processes
$$\tilde{M},\qquad\tilde{M}^2-\int_0^\tec\big\langle H(\nabla\tilde{u}),\varphi\big\rangle^2\,\dif r,\qquad\tilde{M}\tilde{W}-\int_0^\tec\big\langle H(\nabla \tilde{u}),\varphi\big\rangle\,\dif r.$$
In particular, we employ  $\tau_R(\tilde I^n)$ as a localizing sequence for the approximations
$$\tilde{M}^n,\qquad(\tilde{M}^n)^2-\int_0^\tec\big\langle H(\nabla\tilde{u}^n),\varphi\big\rangle^2\,\dif r,\qquad\tilde{M}^n\tilde{W}^n-\int_0^\tec\big\langle H(\nabla \tilde{u}^n),\varphi\big\rangle\,\dif r$$
and pass to the limit.
Therefore, it is also necessary to establish the convergence of the stopping times, that is, for a fixed $R\in\mr^+$ we need to verify
$$\tau_R(\tilde I^n)\rightarrow \tau_R(\tilde I)\qquad \text{a.s.}$$
so it is a question of continuity of $\tau_R(\cdot)$.
This is not true in general but due to observations made in \cite[Lemma 3.5, Lemma 3.6]{hofse}, there exists a sequence $R_m\rightarrow\infty$ such that
\begin{equation}\label{eq:cont}
\tilde\prst\big(\tau_{R_m}(\cdot) \;\text{is continuous at}\; \tilde I\big)=1
\end{equation}
and in the sequel we only employ $R_m$ from this sequence.

Let us proceed with the proof. We observe that, for all $n\in\mn$, the process
$$M^n=\int_0^\tec\big\langle H(\nabla u^n)\,\dif W(r),\varphi\big\rangle$$
is a square integrable $(\mf_t)$-martingale by \eqref{eq:est1} and therefore
$$(M^n)^2-\int_0^\tec\big\langle H(\nabla u^n),\varphi\big\rangle^2\,\dif r,\qquad M^n W-\int_0^\tec\big\langle H(\nabla u^n),\varphi\big\rangle\,\dif r$$
are $(\mf_t)$-martingales. Therefore, as in Lemma \ref{lem:mart}, we obtain for fixed $n\in\mn$ from the equality of laws that
\begin{equation}\label{exp11}
\begin{split}
\tilde{\stred}&\,\gamma\big(\varrho_s \tilde{u}^n,\varrho_s\tilde{W}^n,\tilde u^n_0\big)\big[\tilde{M}^n\big(t\wedge\tau_{R_m}(\tilde I^n)\big)\big]\\
=&\,\tilde{\stred}\,\gamma\big(\varrho_s \tilde{u}^n,\varrho_s\tilde{W}^n,\tilde u^n_0\big)\big[\tilde{M}^n\big(s\wedge\tau_{R_m}(\tilde I^n)\big)\big],
\end{split}
\end{equation}
\begin{equation}\label{exp21}
\begin{split}
\tilde{\stred}&\,\gamma\big(\varrho_s \tilde{u}^n,\varrho_s\tilde{W}^n,\tilde u^n_0\big)\bigg[(\tilde{M}^n)^2\big(t\wedge\tau_{R_m}(\tilde I^n)\big)-\int_{0\wedge\tau_{R_m}(\tilde I^n)}^{t\wedge\tau_{R_m}(\tilde I^n)}\big\langle H(\nabla\tilde{u}^n),\varphi\big\rangle^2\,\dif r\bigg]\\
=&\,\tilde{\stred}\,\gamma\big(\varrho_s \tilde{u}^n,\varrho_s\tilde{W}^n,\tilde u^n_0\big)\bigg[(\tilde{M}^n)^2\big(s\wedge\tau_{R_m}(\tilde I^n)\big)-\int_{0}^{s\wedge\tau_{R_m}(\tilde I^n)}\big\langle H(\nabla\tilde{u}^n),\varphi\big\rangle^2\,\dif r\bigg],
\end{split}
\end{equation}
\begin{equation}\label{exp31}
\begin{split}
\tilde{\stred}&\,\gamma\big(\varrho_s \tilde{u}^n,\varrho_s\tilde{W}^n,\tilde u^n_0\big)\bigg[\tilde{M}^n\tilde{W}^n\big(t\wedge\tau_{R_m}(\tilde I^n)\big)-\int_0^{t\wedge\tau_{R_m}(\tilde I^n)}\big\langle H(\nabla\tilde{u}^n),\varphi\big\rangle\,\dif r\bigg]\\
=&\,\tilde{\stred}\,\gamma\big(\varrho_s \tilde{u}^n,\varrho_s\tilde{W}^n,\tilde u^n_0\big)\bigg[\tilde{M}^n\tilde{W}^n\big(s\wedge\tau_{R_m}(\tilde I^n)\big)-\int_0^{s\wedge\tau_{R_m}(\tilde I^n)}\big\langle H(\nabla\tilde{u}^n),\varphi\big\rangle\,\dif r\bigg],
\end{split}
\end{equation}
where $s,t\in[0,T],\,s\leq t,$ and
$$\gamma:\mathcal{X}_u|_{[0,s]}\times\mathcal{X}_W|_{[0,s]}\times \mathcal{X}_{u_0}\rightarrow [0,1]$$
is a continuous function.

In order to pass to the limit in \eqref{exp11}, \eqref{exp21} and \eqref{exp31}, let us first establish the convergence $\tilde M^n(t)\rightarrow\tilde M(t)$ a.s. for all $t\in\mathcal{D}$. Concerning the term $\langle\tilde u^n(t),\varphi\rangle$ we conclude immediately due to \eqref{convinmeas2}. Since convergence of the third term in $\tilde M^n(t)$ follows directly from Proposition \ref{prop:skorokhod}, let us proceed with the mean curvature term. We recall that according to Proposition \ref{prop:skorokhod} and Corollary \ref{cor:id} it holds true that
$$\diver\bigg(\frac{\nabla\tilde u^n}{H(\nabla\tilde u^n)}\bigg)\sqrt{H(\nabla \tilde u^n)}\rightharpoonup\diver\bigg(\frac{\nabla\tilde u}{H(\nabla\tilde u)}\bigg)\sqrt{H(\nabla \tilde u)}$$
in $L^2(0,T;L^2(\mttwo)))$ almost surely. Moreover, in view of \eqref{eq:sqrtH} we obtain
$$\diver\bigg(\frac{\nabla\tilde u^n}{H(\nabla\tilde u^n)}\bigg)H(\nabla \tilde u^n)\rightharpoonup\diver\bigg(\frac{\nabla\tilde u}{H(\nabla\tilde u)}\bigg)H(\nabla \tilde u)$$
in $L^1(0,T;L^1(\mttwo)))$ almost surely. which yields the desired convergence of the corresponding term in $\tilde M^n(t)$.

Moreover, we observe that according to \eqref{exp11}, \eqref{exp21}, \eqref{exp31} and \cite[Proposition A.1]{hof} it follows for every $n\in\mn$ that
$$\tilde M^n=\int_0^\tec\big\langle H(\nabla \tilde u^n),\varphi\big\rangle\,\dif \tilde W^n\qquad\forall t\in\mathcal{D}\qquad\tilde\prst\text{-a.s}.$$
Therefore, the passage to the limit in \eqref{exp11} and in the first terms on the left hand side of \eqref{exp21} and \eqref{exp31} (and the same for the right hand side) can be justified by using the convergence $\tilde M^n(t)\rightarrow\tilde M(t)$ together with the uniform integrability given by
\begin{align*}
\tilde\stred&\big|\tilde M^n\big(t\wedge\tau_{R_m}( \tilde I^n)\big)\big|^{2+\kappav}\leq C\,\tilde\stred\bigg(\int_0^{\tau_{R_m}( \tilde I^n)}\big\langle H(\nabla \tilde u^n),\varphi\big\rangle^2\,\dif t\bigg)^{(2+\kappav)/2}\\
&\leq C\,\tilde \stred\bigg[\sup_{0\leq t\leq T}\|H(\nabla\tilde{u}^n)\|_{L^1_x}\int_0^{\tau_{R_m}( \tilde I^n)}\|H(\nabla\tilde{u}^n)\|_{L^1_x}^{1+\theta}\,\dif r\bigg]\leq C_\delta R_m.
\end{align*}
This estimate also yields the necessary uniform integrability that together with
$$\big\langle H(\nabla\tilde{u}^n),\varphi\big\rangle\rightarrow\big\langle H(\nabla\tilde{u}^n),\varphi\big\rangle\quad \text{a.e.}\;(\omega,r)$$
justifies the passage to the limit in the remaining terms in \eqref{exp21} and \eqref{exp31}. Thus we have shown that $\tilde{M}^2-\int_0^\tec\big\langle H(\nabla\tilde{u}),\varphi\big\rangle^2\,\dif r$ and $\tilde{M}\tilde{W}-\int_0^\tec\big\langle H(\nabla \tilde{u}),\varphi\big\rangle\,\dif r$ are densely defined local martingales with respect to $(\tilde\mf_t)$ and the proof is complete.
\end{proof}
\end{prop}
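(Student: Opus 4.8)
The plan is to follow a localization-and-transfer strategy: identify a dense set $\mathcal{D}\subset[0,T]$ along which $\tilde u^n(t)\to\tilde u(t)$ in $L^2(\mttwo)$ almost surely, transfer the martingale identities satisfied by the viscous approximations to the new probability space via equality of laws, stop everything by a suitable localizing sequence, and pass to the limit. \textit{First}, from the a.s.\ convergence $\tilde u^n\to\tilde u$ in $L^2(0,T;L^2(\mttwo))$ given by Proposition~\ref{prop:skorokhod}, the dominated convergence theorem yields $\tilde u^n\to\tilde u$ in measure with respect to $\tilde\prst\otimes\mathcal{L}_{[0,T]}$, hence along a subsequence there is a set $\mathcal{D}$ of full Lebesgue measure with $\tilde u^n(t)\to\tilde u(t)$ in $L^2(\mttwo)$ $\tilde\prst$-a.s.\ for every $t\in\mathcal{D}$; since $\mathcal{D}^c$ is null, $\mathcal{D}$ is dense and serves as the index set. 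The claim that $\tilde W$ is an $(\tilde\mf_t)$-Wiener process follows exactly as in Lemma~\ref{lemma:wiener}: $\tilde W$ is adapted, the martingale identity for $W$ transfers by equality of laws when tested against $\gamma(\varrho_s\tilde u,\varrho_s\tilde W,\tilde u_0)$, passes to the limit by the Vitali theorem, and the L\'evy characterization applies.

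\textit{Second}, for each $n$ the process $M^n=\int_0^\cdot\langle H(\nabla u^n),\varphi\rangle\,\dif W$ is a genuine square-integrable $(\mf_t)$-martingale by Proposition~\ref{prop:est1}, so $(M^n)^2-\int_0^\cdot\langle H(\nabla u^n),\varphi\rangle^2\,\dif r$ and $M^nW-\int_0^\cdot\langle H(\nabla u^n),\varphi\rangle\,\dif r$ are martingales. Because no uniform moment bound on $\nabla u^n$ is available, these lack uniform integrability on $[0,T]$; we therefore introduce the stopping times $\tau_R(\tilde I^n)$, the first exit time from $(-R,R)$ of the continuous process $\tilde I^n=\int_0^\cdot\|H(\nabla\tilde u^n)\|_{L^1_x}^{1+\theta}\,\dif s$. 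By Proposition~\ref{prop:est1} no blow-up occurs in finite time, so $\sup_R\tau_R(\tilde I^n)=T$ a.s.\ and these are genuine localizing sequences; equality of laws then gives the stopped identities \eqref{exp11}--\eqref{exp31}, with $\tau_R(\tilde I)$ earmarked as the localizing sequence for the limit processes.

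\textit{Third}, one passes to the limit. The convergence $\tilde M^n(t)\to\tilde M(t)$ $\tilde\prst$-a.s.\ for $t\in\mathcal{D}$ holds because $\langle\tilde u^n(t),\varphi\rangle$ converges by Step~1, the $\langle\tilde u^n,\Delta\varphi\rangle$-term by Proposition~\ref{prop:skorokhod}, and the mean-curvature term because, by Corollary~\ref{cor:id} and the strong convergence $\sqrt{H(\nabla\tilde u^n)}\to\sqrt{H(\nabla\tilde u)}$ in $L^2$ from Proposition~\ref{prop:young}, one gets $\diver(\nabla\tilde u^n/H(\nabla\tilde u^n))H(\nabla\tilde u^n)\rightharpoonup\diver(\nabla\tilde u/H(\nabla\tilde u))H(\nabla\tilde u)$ in $L^1(0,T;L^1(\mttwo))$ a.s. The densely-defined-martingale identification \cite[Proposition~A.1]{hof} gives $\tilde M^n=\int_0^\cdot\langle H(\nabla\tilde u^n),\varphi\rangle\,\dif\tilde W^n$ on $\mathcal{D}$, and the crucial quantitative input is the stopped bound, via Burkholder--Davis--Gundy,
\[
\tilde\stred\big|\tilde M^n\big(t\wedge\tau_{R_m}(\tilde I^n)\big)\big|^{2+\kappav}\le C\,\tilde\stred\Big[\sup_{0\le t\le T}\|H(\nabla\tilde u^n)\|_{L^1_x}\int_0^{\tau_{R_m}(\tilde I^n)}\|H(\nabla\tilde u^n)\|_{L^1_x}^{1+\theta}\,\dif r\Big]\le C_\delta R_m,
\]
which supplies the uniform integrability needed to pass to the limit in \eqref{exp11}, and together with the a.e.\ convergence $\langle H(\nabla\tilde u^n),\varphi\rangle\to\langle H(\nabla\tilde u),\varphi\rangle$, also in the quadratic-variation and cross terms of \eqref{exp21}--\eqref{exp31}. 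A delicate point is that $\tau_R(\cdot):C([0,T];\mr)\to[0,T]$ is not continuous in general; as in \cite[Lemma~3.5, Lemma~3.6]{hofse} one selects a sequence $R_m\to\infty$ along which $\tau_{R_m}$ is $\tilde\prst$-a.s.\ continuous at $\tilde I$, and only these levels are used in the localization, giving $\tau_{R_m}(\tilde I^n)\to\tau_{R_m}(\tilde I)$ a.s. Passing to the limit in \eqref{exp11}--\eqref{exp31} then shows the three processes, indexed by $t\in\mathcal{D}$, are densely defined $(\tilde\mf_t)$-local martingales.

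\textit{Main obstacle.} The heart of the difficulty is the absence of uniform moment estimates on $\nabla u^n$: it forces the restriction to $t\in\mathcal{D}$ (no tightness in any space of time-continuous or weakly time-continuous functions, so the limit martingales exist only on a dense set of times), and it forces the use of the $\tilde I^n$-localization with the carefully chosen level sequence $R_m$ to recover uniform integrability of the stopped martingales — the reason the densely-defined and local-martingale frameworks of \cite{hof,hofse} are needed rather than the direct argument of Subsection~\ref{subsec:identif}.
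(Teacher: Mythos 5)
Your proposal is correct and follows essentially the same route as the paper: the dense set $\mathcal{D}$ obtained from convergence in measure, transfer of the stopped martingale identities by equality of laws, localization by $\tau_{R_m}(\tilde I^n)$ with the level sequence $R_m$ chosen via \cite[Lemma 3.5, Lemma 3.6]{hofse} to ensure a.s.\ continuity at $\tilde I$, the identification $\tilde M^n=\int_0^\tec\langle H(\nabla\tilde u^n),\varphi\rangle\,\dif\tilde W^n$ via \cite[Proposition A.1]{hof}, and the Burkholder--Davis--Gundy stopped moment bound providing the uniform integrability for the limit passage. No substantive differences from the paper's argument.
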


\begin{proof}[Proof of Theorem \ref{prop:martsol1}]
Having Proposition \ref{prop:mart1} in hand, we apply \cite[Proposition A.1]{hof} for the stopped processes 
\begin{align*}
&\tilde{M}\big(\cdot\wedge\tau_{R_m}(\tilde I)\big),\qquad\tilde{M}^2\big(\cdot\wedge\tau_{R_m}(\tilde I)\big)-\int_0^{\cdot\wedge\tau_{R_m}(\tilde I)}\big\langle H(\nabla\tilde{u}),\varphi\big\rangle^2\,\dif r,\\
&\tilde{M}\tilde{W}\big(\cdot\wedge\tau_{R_m}(\tilde I)\big)-\int_0^{\cdot\wedge\tau_{R_m}(\tilde I)}\big\langle H(\nabla \tilde{u}),\varphi\big\rangle\,\dif r,
\end{align*}
and deduce that
$$\tilde{M}\big(\cdot\wedge\tau_{R_m}(\tilde I)\big)=\int_0^{\cdot\wedge\tau_{R_m}(\tilde I)}\big\langle H(\nabla\tilde{u})\,\dif\tilde{W},\varphi\big\rangle\qquad\forall t\in\mathcal{D}\quad\tilde\prst\text{-a.s.}$$
for every $m\in\mn$ and consequently \eqref{eq:weak1} holds true due to \eqref{eq:nonblowup}.

In particular, $\tilde M$ can be defined for all $t\in[0, T ]$ such that it has a modification which is a continuous $(\tilde\mf_t)$-local martingale and furthermore, due to Proposition \ref{prop:est1}, it is a $(\tilde\mf_t)$-martingale. Besides, we observe that \eqref{eq:smcf2} is satisfied in $H^{-1}(\mttwo)$ and, as a consequence, $\tilde u$ (as a class of equivalence) has a representative with almost surely continuous trajectories in $H^{-1}(\mttwo)$ and hence is measurable with respect to the predictable $\sigma$-field $\mathcal{P}$. The continuous embedding $H^{1}(\mttwo)\hookrightarrow H^{-1}(\mttwo)$ then implies that $\tilde u\in L^2(\tilde\Omega\times[0,T],\mathcal{P},\dif\prst\otimes\dif t;H^1(\mttwo))$ as required by Definition \ref{def:sol}. Indeed, any Borel subset of $H^{1}(\mttwo)$ is also Borel in $H^{-1}(\mttwo)$ and therefore its preimage under $\tilde u$ is predictable. The proof is complete.
\end{proof}


\end{document}